\DeclareMathOperator{\I}{\mathbbm{1}}%
\DeclareMathOperator{\Law}{Law}%
\DeclareMathOperator{\sign}{sign}
\newcommand{\norm}[1] {|\!|\!|#1|\!|\!|}
\def\E{\hskip.15ex\mathsf{E}\hskip.10ex}
\def\P{\mathsf{P}}
\def\eps{\varepsilon}
\def\phi{\varphi}
\newtheorem{Theorem} {Theorem}[section]
\newtheorem{Lemma}[Theorem]{Lemma}
\newtheorem{Proposition}[Theorem]{Proposition}
\newtheorem{Assumption}{Assumption}
\theoremstyle{definition}\newtheorem{Example}{Example}[section]
\theoremstyle{definition}\newtheorem{Remark}[Theorem]{Remark}
\theoremstyle{definition}
\numberwithin{equation}{section}
\renewcommand{\ge}{\geqslant}
\renewcommand{\le}{\leqslant}
\newcommand{\nn}{\nonumber}
\newcommand{\wt}{\widetilde}
\renewcommand{\d}{\partial}
\newcommand{\B}{\mathcal{B}}
\newcommand{\C}{\mathcal{C}}
\newcommand{\F}{\mathcal{F}}
\newcommand{\G}{\mathcal{G}}
\newcommand{\N}{\mathbb{N}}
\newcommand{\R}{\mathbb{R}}
\newcommand{\Z}{\mathbb{Z}}
\title{\textbf{Invariant measures for stochastic functional differential equations}}
\author[$*$,1,2]{Oleg Butkovsky}
\author[$\ddagger$,2]{Michael Scheutzow}
\affil[1]{\small{Technion --- Israel Institute of Technology,

Faculty of Industrial Engineering and Management

Haifa, 3200003, Israel.\bigskip
}}
\affil[2]{\small {Technische Universit\"at Berlin,

Institut f\"ur Mathematik, MA 7-5, Fakult\"at II,

Stra\ss e des 17.~Juni 136, 10623 Berlin, FRG}}
\date{October 30, 2017} 
\begin{document}
\maketitle
\renewcommand{\thefootnote}{*}
\footnotetext{Supported in part by Israel Science Foundation Grant 1325/14,
DFG Research Unit FOR 2402, National Science Foundation Grant DMS--1440140,
and a Technion fellowship.\\ Email: \texttt{oleg.butkovskiy@gmail.com}.}
\renewcommand{\thefootnote}{$\ddagger$}
\footnotetext{Supported in part by DFG Research Unit FOR 2402,
National Science Foundation Grant DMS--1440140. Email: \texttt{ms@math.tu-berlin.de}.}

\begin{abstract}
We establish new general sufficient conditions for the existence of an invariant measure for
stochastic functional differential equations and exponential or subexponential convergence to the equilibrium.
The obtained conditions extend the Ve\-re\-tennikov--Khasminskii conditions for SDEs and are optimal in a certain sense.
\end{abstract}

\section{Introduction}

While ergodic properties of stochastic differential equations (SDEs) are more or less understood by now, less is known about  ergodic properties of stochastic functional (or~delay) differential equations (SFDEs). In this article we establish new general sufficient conditions for existence of an invariant measure for SFDEs and obtain estimates for the rate of convergence to the equilibrium.

SFDEs in general have quite a peculiar ergodic behavior that can be very different from the ergodic behavior of SDEs. Let us briefly describe the main features. First of all, as was shown in \cite{Sch05}, an SFDE might have a \textit{reconstruction property}. Namely, consider the equation
\begin{align}\label{badsfde}
d X^{(x)}(t)&= f(X^{(x)}(t-1))dt + g(X^{(x)}(t-1))d W(t),\quad t\ge0,\\
X^{(x)}(t)&=x(t),\quad t\in[-1,0]\nn.
\end{align}
where $f\colon\R\to\R$ is a Lipschitz function, $g\colon\R\to\R$ is a positive strictly increasing bounded Lipschitz function, $x\colon[-1,0]\to\R$ is a continuous function and $W$ is a 1-dimensional Brownian motion. It turns out that if for some $N>0$ (that might be arbitrarily large) one observes a single piece of trajectory $\{X(t,\omega),\, t\in[N,N+1]\}$, then with probability $1$ one can reconstruct the initial condition $\{X(t),\, t\in[-1,0]\}$ of the SFDE. Clearly, this is not the case for SDEs.

As a result of this, the solution to \eqref{badsfde} is not a strong Feller process and it does not have a \textit{mixing property}. Indeed, if $x\neq y$, then the measures $\Law \{X^{(x)}(t),\, t\in[N,N+1]\}$ and $\Law \{X^{(y)}(t),\, t\in[N,N+1]\}$ are mutually singular for any $N>0$. Therefore, one cannot hope to construct a classical coupling between these measures to show asymptotic stability.

SFDEs might also have a \textit{resonance property}. If one considers a delay version of a classical Ornstein--Uhlenbeck process,
\begin{equation}\label{OU}
d X(t)=-\lambda X(t-1)\,d t + d W(t),\quad t\ge0,
\end{equation}
where $\lambda>0$, then (contrary to the non-delay case) for large enough $\lambda$ (more precisely, $\lambda\ge\pi/2$) this equation does not have an invariant measure \cite{KM92}. Moreover, for large $\lambda$ the equation oscillates to infinity with rapidly increasing diameter of oscillations.

Due to the above mentioned challenges the question of existence of an invariant measure and rate of convergence to the equilibrium remained open even for a relatively simple SFDE \eqref{badsfde} if $f$ is not affine. In the current paper we present an answer to this question.

Let us recall that there are two quite general approaches that are used to study the ergodic properties of Markov processes. The first approach is based on  functional inequalities, see, e.g., \cite{BCG}. The second approach
is based on the concept of small sets and utilizes the coupling method, see, e.g., \cite{MT09}. Using these techniques, it was shown that if the drift vector field of an SDE points towards the origin (the so-called
Veretennikov--Khasminskii condition), then, under some further non-degeneracy assumptions, the SDE has a unique invariant measure and converges to it in total variation, see \cite{DFG}, \cite{KV05}, \cite{Ver00}. More general SDEs are treated in~\cite{HM15}.

Unfortunately, these methods are not applicable for SFDEs due to their lack of mixing properties. Note
though that  ergodic properties of affine SFDEs can be treated by comparison with the deterministic case and by studying the fundamental solutions, see \cite{GK00}, \cite{KM92}, \cite{MS90}, \cite{RRG}.  However this technique also
does not work for non-affine SFDEs.

Some sufficient conditions for the existence of an invariant measure for SFDEs are obtained in \cite[Theorem~3]{IN}. Let us note though that it might be quite hard to verify these conditions in practice.

To overcome these difficulties and to derive verifiable sufficient conditions M.~Hairer, J.~Mattingly and M.~Scheutzow suggested a new approach targeted specifically at Markov processes with bad mixing properties \cite{HMS11}. They introduced a new concept of a
$d$-small set, and showed that under certain conditions (much weaker than mixing) a Markov process has a unique invariant measure and converges to it. The price to pay is that this convergence occurs in the Wasserstein metric rather than in total variation. This approach was further developed in \cite{Bu14}.

In this paper we apply this general approach to SFDEs. The main obstacle here is to construct a proper Lyapunov function. Due to the memory property it is much more challenging than in the SDE case. Indeed, a solution to an SFDE is an infinite dimensional Markov process with non-locally compact state space and rather involved generator. We develop a new technique inspired by some ideas from \cite{Sch84}.

Another obstacle was to obtain a condition that is general enough to cover drifts in \eqref{badsfde} of the form $f(x)=-|x(-1)|^{\beta}\sign (x(-1))$, $\beta\in[0,1)$ (in this case an invariant measure exists), but not ``too general'' since \eqref{badsfde} with the drift $f(x)=-\lambda x(-1)$ does not have an invariant measure for  $\lambda\ge\pi/2$.

The obtained result can be formulated as follows: one should check that the drift vector field $f(x)$ points towards the origin only for ``typical''  $x$. This extends and generalizes the corresponding theorems for SFDEs in
\cite{Sch84}, \cite{HMS11}, \cite{Bu14}.  The obtained conditions and rates are optimal in a certain sense. We explain our result in more details below in Section~2.

Note also that there is an alternative fruitful approach, which is also suitable for SFDEs, that was suggested and developed in \cite{HMS11}, \cite{KS15}. It is based on the generalized coupling method. Using this approach it is possible to establish uniqueness of an invariant measure and asymptotic stability under some natural conditions. However, this approach does not allow directly to obtain the results on existence of an invariant measure and on the convergence rate. Therefore we do not use it here.

The paper is organized as follows. We formulate and discuss our main results in Section~2. Section~3 contains specific applications of our results to different SFDEs as well as some counterexamples. All proofs are placed in
Section~4.

\medskip
\noindent \textbf{Acknowledgments}. The authors are very grateful to Alexei Kulik for fruitful discussions. The authors also would like to thank the referees for their valuable comments and suggestions which helped to improve the quality of the paper.

This material is partially based upon work supported by the National Science Foundation under Grant No. DMS--1440140 while the authors were in residence at the Mathematical Sciences Research Institute (MSRI) in Berkeley, California, during the Fall 2015 semester. The authors continued the project during the visit of OB to Technische Universit\"at Berlin in March 2016. The authors are  grateful to MSRI and TU Berlin for their support and hospitality.

\section{Main results}\label{S:2}

We assume that all random objects are defined on a common probability space $(\Omega,\F,\P)$. Fix $r>0$, positive integers $d$, $m$ and let $\C:=\C([-r,0],\R^d)$ be the space of continuous functions endowed with the supremum norm $\|\cdot\|$. We study a stochastic functional differential equation
\begin{align}\label{mainSDDE}
d X(t)&=f(X_t)\,d t + g(X_t)\,d W(t),\quad t\ge0\\
X_0&=x\nn
\end{align}
where  $f:\C \to \R^d$ and $g:\C \to \R^{d\times m}$ are measurable functions, $W$ is an $m$-dimensional Brownian motion, the initial condition $x\in\C$, and we used the standard notation $X_t(s):=X(t+s)$, $s\in[-r,0]$.

For a matrix $M\in\R^{d\times m}$ we denote by $\norm{M}$ its Frobenius norm, that is, $\norm{M}:=\sqrt{\sum M_{ij}^2}$. For a real $a$ we put $a_+:=\max(a,0)$. We suppose that the drift and diffusion of \eqref{mainSDDE} satisfy the following condition:
\begin{Assumption}\label{A:1}
The drift $f$ is continuous and bounded on bounded 
subsets of $\C$. The diffusion $g$ is non-degenerate, that is, for any $x\in\C$ the matrix $g(x)$ admits a right inverse $g^{-1}(x)$ and
\begin{equation*}
\sup_{x\in\C}\norm{g^{-1}(x)}<\infty.
\end{equation*}
Furthermore, $f$ satisfies the one-sided Lipschitz condition and $g$ is Lipschitz. Namely, there exists $C>0$ such that for any $x,y\in\C$ we have
\begin{equation*}
\langle f(x)-f(y),x(0)-y(0)\rangle_+ +\norm{ g(x)-g(y)}^2\le C \|x-y\|^2.
\end{equation*}
\end{Assumption}

It follows from \cite{RS08}  that under Assumption~\textbf{\ref{A:1}} SFDE \eqref{mainSDDE} has a unique strong solution. Moreover, this solution $X=(X_t)_{t\ge0}$ is a strong Markov process with the state space $(\C,\B(\C))$, see Proposition~\ref{P:MP} below. We denote the transition probabilities of $X$ by $P_t(x,\cdot)$, where $t\ge0$, $x\in\C$.

In this article we study the invariant probability measures of $X$. Further, we will drop the word ``probability'' and refer to these measures just as invariant measures.

It was shown in \cite[Theorems~3.1 and 3.7]{HMS11} (see also \cite[Section~6.1]{KS15}) that 
under \textbf{\ref{A:1}} $X$ has at most one invariant  measure and if it has one, then the transition probabilities weakly converge to this measure. Note however that \textbf{\ref{A:1}} does not guarantee the \textit{existence} of the invariant measure of $X$. Indeed, the equation
\begin{equation*}
d X(t)=d W(t),\quad t\ge0
\end{equation*}
satisfies \textbf{\ref{A:1}} but does not have an invariant measure. Also assumption \textbf{\ref{A:1}} alone does not imply any bound on convergence rate, see \cite[Remark~3.4]{HMS11}.

We will provide two different sets of conditions for the existence of an invariant measure for SFDE \eqref{mainSDDE} and present upper bounds for the rate of convergence to the equilibrium.  To formulate our results we need to introduce some notation.

Let $(E,\mathcal{B}(E))$ be a Polish space.
Recall that the \textit{Wasserstein }(or \textit{Kantorovich})\textit{ distance} between two probability measures $\mu$, $\nu$ on $(E,\mathcal{B}(E))$ is defined as follows:
\begin{equation*}
W_d(\mu,\nu):=\inf \E d (X,Y),
\end{equation*}
where $d$ is a lower semicontinuous metric on $E$ and the infimum is taken over all random variables $X$, $Y$ that are distributed as $\mu$ and $\nu$, correspondingly.
If the metric $d$ is the discrete metric, that is $d(x,y)=\I (x\neq y)$, then the Wasserstein distance is equivalent to the \textit{total variation distance} which is defined by
\begin{equation*}
d_{TV}(\mu,\nu):=\inf \P (X\neq Y)=\sup_{A\in\mathcal{B}(E)}|\mu(A)-\nu(A)|,
\end{equation*}
where again the infimum is taken over all random variables $X$, $Y$ that are distributed as $\mu$ and $\nu$, correspondingly. In the paper we will consider only bounded distances $d$. In this case, convergence in total variation implies convergence in the Wasserstein metric; the latter is also equivalent to the weak convergence  (see, e.g., \cite{Bog}).

Throughout the paper, we will take the space $\C$ as the state space $E$.
For $x \in \C$ we denote the diameter of the range of $x$ by
\begin{equation*}
D(x):=\sup_{t_1,t_2\in[-r,0]}|x(t_1)-x(t_2)|.
\end{equation*}

As in \cite[Section~5]{HMS11}, we consider the following family of distances on $\C$:
\begin{equation*}
d_\rho(x,y):=\frac{\|x-y\|}\rho\wedge1,\quad x,y\in\C,
\end{equation*}
where $\rho>0$.

\smallskip

Now we are in position to present our main results. We consider two different groups of conditions which are sufficient for 
the existence of invariant measure and exponential or subexponential convergence to the equilibrium.

\begin{Assumption}[Exponential convergence]\label{A:2}
The diffusion $g$ is globally bounded and the drift $f$ is sublinear. The latter means that there exist constants $\beta\in[0,1)$, $C>0$ such that
\begin{equation}\label{bound}
|f(x)| \le C(1+\|x\|^\beta), \quad x\in\C.
\end{equation}
Furthermore, there exist constants $\sigma,M>0$ and a  function $\kappa:\R_+\to \R_+$ such that
$\lim_{z \to \infty}\big(\kappa(z)z^{-\beta}\big)=\infty$ and
\begin{equation}\label{innerproduct}
\langle f(x),x(0)\rangle \le -\sigma|x(0)|,\quad  \text{for any } x\in\C \text{ with }D(x)\le \kappa(|x(0)|) \text{ and } |x(0)|\ge M.
\end{equation}
\end{Assumption}

\begin{Assumption}[Subexponential convergence]\label{A:3}
The diffusion $g$ and drift $f$ are globally bounded. Furthermore, there exist $\alpha\in(0,1)$, $\sigma>0$, $M>0$ and a  function $\kappa:\R_+\to \R_+$ such that $\lim_{t \to \infty}\big(\kappa(z)/\sqrt{\log z }\big)=\infty$ and
\begin{equation}\label{cond}
\langle f(x),x(0)\rangle \le - \sigma |x(0)|^{\alpha},\quad  \text{for any } x\in\C \text{ with }D(x)\le \kappa(|x(0)|) \text{ and } |x(0)|\ge M.
\end{equation}
\end{Assumption}

We will also present  results concerning convergence in the total variation distance. To state these results we need an additional assumption on the structure of the drift and the diffusion.

\begin{Assumption}[Convergence in total variation]\label{A:4}
The drift $f$ is globally Lipschitz and the diffusion $g$ depends on $x$ only through $x(0)$.
\end{Assumption}

\begin{Theorem}\label{T:1}
Suppose that Assumptions \textbf{\ref{A:1}} and \textbf{\ref{A:2}} hold. Then SFDE \eqref{mainSDDE} has a unique invariant measure $\pi$ and
the transition probabilities $P_t(x,\cdot)$ converge to it exponentially in the Wasserstein metric. That is, for any $\rho>0$ there exist $C>0$, $\lambda_1>0$, $\lambda_2>0$ such that for all $x\in\C$ we have
\begin{equation}\label{expwas}
W_{d_\rho}(P_t(x,\cdot),\pi)\le C e^{\lambda_1 |x(0)|+ D(x)}e^{-\lambda_2 t},\quad t\ge0.
\end{equation}
Moreover, if additionally Assumption \textbf{\ref{A:4}} holds, then the convergence in the Wasserstein metric
in \eqref{expwas} can be replaced by convergence in total variation metric.
\end{Theorem}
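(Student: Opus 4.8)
The plan is to follow the general strategy behind the $d$-small set / Harris-type theory for Markov processes with poor mixing, as developed in \cite{HMS11} and \cite{Bu14}, and to supply the two ingredients that this machinery requires: (i) a suitable Lyapunov function $V\colon\C\to[1,\infty)$ whose generator satisfies a geometric drift condition $\mathcal{L}V\le -c V + b\,\I_K$ for some ``small'' set $K$, and (ii) a $d_\rho$-contraction estimate on $K$, i.e.\ a uniform bound showing that trajectories started in $K$ couple (in the $d_\rho$ sense) with positive probability in bounded time. Given (i) and (ii), the abstract results of \cite{HMS11}, \cite{Bu14} yield both the existence of a unique invariant measure $\pi$ and the exponential rate \eqref{expwas}; the form of the bound $Ce^{\lambda_1|x(0)|+D(x)}e^{-\lambda_2 t}$ suggests that $V(x)$ should be comparable to $\exp(\lambda_1|x(0)|+D(x))$, or a mild variant thereof, so tracking the initial datum through these two quantities will be the natural bookkeeping.

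The heart of the matter, and the step I expect to be the main obstacle, is constructing the Lyapunov function and verifying the drift inequality. The difficulty is intrinsic to the delay setting: the state is the whole segment $X_t\in\C$, the generator is non-local and complicated, and the drift condition \eqref{innerproduct} only gives a restoring force $\langle f(x),x(0)\rangle\le-\sigma|x(0)|$ when the segment is ``flat'', i.e.\ when $D(x)\le\kappa(|x(0)|)$. So one cannot use a naive functional of $|X(t)|$ alone. Following the idea attributed to \cite{Sch84}, I would work with a functional that simultaneously controls the current value $|X(t)|$ and the oscillation $D(X_t)$ of the recent segment — roughly $V(X_t)=\Phi\big(|X(t)|\big)+\Psi\big(D(X_t)\big)$ with $\Phi$ exponential — and estimate its expected increment over a time window of length $r$ (one delay). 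The point is that over such a window the diffusion is bounded, so $D(X_{t+r})$ is controlled in probability by a Gaussian-type tail; hence, with overwhelming probability, the segment is flat on the relevant scale $\kappa(|X(t)|)$ (here the hypothesis $\kappa(z)z^{-\beta}\to\infty$ is exactly what guarantees that $\kappa(|X(t)|)$ dominates both the typical diffusive oscillation and the displacement $\int f\,ds$ coming from the sublinear drift \eqref{bound}), so \eqref{innerproduct} applies and produces genuine drift towards $0$; on the exceptional event the growth of $V$ is controlled by sublinearity of $f$ and boundedness of $g$. Making this quantitative — choosing $\Phi,\Psi$, the window length, and the thresholds, and balancing the gain from \eqref{innerproduct} against the loss on the bad event — is the technical core. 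I would state it as a lemma: $\E\big[V(X_{(n+1)r})\mid \F_{nr}\big]\le \theta V(X_{nr})+b$ for $\theta<1$ on $\{|X(nr)(0)|$ large$\}$, which upgrades (via the Markov property and standard arguments) to the continuous-time geometric drift condition with a bounded-in-$\C$ small set $K=\{V\le R\}$.

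For ingredient (ii), the $d_\rho$-smallness of $K$: since $K$ is a bounded subset of $\C$ and $g$ is non-degenerate and bounded while $f$ is bounded on bounded sets (Assumption~\textbf{\ref{A:1}}), one can use a Girsanov-type / synchronous-coupling argument to show that for $x,y\in K$ the laws $P_T(x,\cdot)$ and $P_T(y,\cdot)$ can be coupled so that $\|X_T^{(x)}-X_T^{(y)}\|\le\rho$ with probability bounded below uniformly on $K$, for a fixed $T$ (of order a few delays). This is essentially Lemma-type material already present in \cite{HMS11}, \cite{Bu14}, so I would cite it, checking only that our hypotheses match. Combining (i) and (ii) with the abstract theorem gives \eqref{expwas}, with $\lambda_1$ and the exponent of $D(x)$ read off from the comparison $W_{d_\rho}(P_t(x,\cdot),\pi)\lesssim V(x)e^{-\lambda_2 t}$ after noting $V(x)\lesssim e^{\lambda_1|x(0)|+D(x)}$ (possibly after enlarging $\lambda_1$; the coefficient $1$ in front of $D(x)$ in \eqref{expwas} should come out of the specific form of $\Psi$).

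Finally, for the total-variation refinement under Assumption~\textbf{\ref{A:4}}: when $f$ is globally Lipschitz the one-sided condition is automatic, and more importantly when $g(x)=\bar g(x(0))$ depends only on the present value, the ``reconstruction'' obstruction from the introduction disappears — two solutions that agree at time $t$ on the full segment thereafter have mutually absolutely continuous laws — so one can strengthen the $d_\rho$-coupling on $K$ to an exact coupling: run a synchronous coupling until the two segments merge (which happens with positive probability in bounded time by the $d_\rho$ estimate together with the Lipschitz/SDE structure and a comparison argument à la \cite{HMS11}, \cite{Bu14}), and once $X_s^{(x)}=X_s^{(y)}$ as elements of $\C$ the two processes coincide forever. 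This replaces the $d_\rho$-small set by a genuine small set in the sense of Meyn--Tweedie, and the same geometric drift condition from step (i) then yields geometric ergodicity in total variation by the classical Harris theorem, i.e.\ \eqref{expwas} with $d_\rho$ replaced by $d_{TV}$. The only delicate point here is the merging-in-finite-time claim for segments, which I would handle by first forcing $|X^{(x)}(t)-X^{(y)}(t)|$ small via the $K$-coupling and then using non-degeneracy of $g$ together with the Lipschitz drift to hit exact coincidence on an interval of length $r$.
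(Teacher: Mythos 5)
Your overall strategy coincides with the paper's: a Lyapunov function on $\C$ tracking both $|x(0)|$ and $D(x)$, a drift estimate obtained by splitting into a ``flat segment'' regime (where \eqref{innerproduct} applies and Gaussian-type tail bounds on $D(X_1)$ control the exceptional event --- you correctly identify $\lim_{z\to\infty}\kappa(z)z^{-\beta}=\infty$ as the reason the typical segment is flat on the relevant scale) and a ``large diameter'' regime, followed by the abstract Harris-type theorems of \cite{HMS11} and \cite{Bu14}; the $d_\rho$-contraction on bounded sets is indeed quoted from \cite[Sections~5.1--5.2]{HMS11}. However, your additive ansatz $V(x)=\Phi(|x(0)|)+\Psi(D(x))$ will not close the drift inequality, and this is not mere bookkeeping. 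In the regime $D(x)\gtrsim B|x(0)|^\beta$ with $|x(0)|$ large (where the flat-segment contraction of Lemma~\ref{six} is not available), the first summand can only be bounded by $\E_x e^{\lambda|X(1)|}\le e^{\lambda|x(0)|}\,\E_x e^{\lambda D(X_1)}\le e^{\lambda|x(0)|}e^{C(|x(0)|^\beta+D(x)^\beta+1)}$, i.e.\ it may grow by the \emph{additive} amount $e^{\lambda|x(0)|}\bigl(e^{C|x(0)|^\beta+CD(x)^\beta}-1\bigr)$; since $\beta<1$, $|x(0)|$ there can be polynomially larger than $D(x)$, so this loss dwarfs the entire second summand $e^{\mu D(x)}$ and no choice of $\Phi,\Psi$ compensates. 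The paper's function, $V(x)=\exp\{\lambda|x(0)|+(D(x)-\gamma|x(0)|^\beta)_+\}$, couples the two quantities inside one exponent: in the large-diameter regime the factor $e^{D(x)}$ present in $V(x)$ but absent (up to $e^{CD(x)^\beta}$) from $\E_xV(X_1)$ produces the contraction (Lemma~\ref{three}), while in the flat regime the second term in the exponent is negligible and the gain comes from $\E_xe^{\lambda|X(1)|}\le(1-\rho)e^{\lambda|x(0)|}$, proved via It\^o's formula applied to a smoothed $e^{\lambda|\cdot|}$ and a Gronwall argument rather than via the generator of the segment process (Lemmas~\ref{six} and \ref{L:Gron}). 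You identified the right regimes but the wrong glue; this is the step that needs repair.

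For the total-variation upgrade your route genuinely differs from the paper's. You propose an exact coupling: force the two solutions close on the $d_\rho$-small set, then merge the segments using non-degeneracy and the fact that $g$ depends only on $x(0)$. This can be made to work (once $g=\bar g(x(0))$ and $f$ is Lipschitz, Girsanov gives mutual absolute continuity of the path laws, hence a maximal coupling that coincides on $[t-r,t]$ with uniformly positive probability), but the merging step you flag as delicate is exactly where the effort lies. The paper sidesteps the coupling construction entirely: it invokes the Harnack inequality of \cite{WY11} (valid precisely under Assumption~\textbf{\ref{A:4}}) to obtain $d_{TV}(P_t(x,\cdot),P_t(y,\cdot))\le 1-\bigl(e^{-C(1+\|x-y\|^2)}\wedge p^{-1}\bigr)$ uniformly, and then feeds this Dobrushin-type bound together with the same Lyapunov estimate into the theorems of \cite{Kuni15}. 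The Harnack route is shorter and avoids any explicit construction; your route would give the same conclusion but requires you to actually prove the finite-time coincidence of segments.
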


It is interesting to compare the obtained theorem with the corresponding result for SDEs. Recall that in the non-delay case the following condition is sufficient \cite{Ver88} for existence and uniqueness of the invariant measure and exponential convergence of transition probabilities in total variation:
\begin{equation}\label{sdecond}
\langle f(y), y\rangle\le -\sigma|y|,\quad |y|\ge M, y\in\R^d,
\end{equation}
where $M>0$, $\sigma>0$. In other words, for large enough $y\in\R^d$ the drift $f$ should point towards the origin.
Therefore, condition \eqref{innerproduct} is a direct equivalent of \eqref{sdecond} for SFDEs. We can call it
the \textit{extended Veretennikov--Khasminskii condition}.

Note that it is sufficient  to check \eqref{innerproduct} only for trajectories $x$ with not too large diameters. This is quite  important as it makes verifying \eqref{innerproduct} in practice much easier, see Section~3. The intuition here is the following. As one can see from the results of Section~4 below, for large enough $n$ with high probability $D(X_n)$ is approximately of the size $O (|X(n)|^\beta)$ regardless of the initial conditions. Thus, it is very unlikely that the trajectory will have a much bigger diameter. Even if it happens, one can just wait till the trajectory has a smaller diameter and then the drift would point towards the origin. Thus, one has to check the extended Veretennikov--Khasminskii condition only for ``typical'' trajectories. Note that this additional assumption $\lim_{z \to \infty}\big(\kappa(z)z^{-\beta}\big)=\infty$ is optimal, see Section~3 for  counterexamples.

The convergence in the Wasserstein metric in \eqref{expwas} cannot be replaced by the convergence in total
variation without additional Assumption \textbf{\ref{A:4}}. This is due to the reconstruction property discussed above. If the diffusion does not depend on the past, then SFDE does not have the reconstruction property and the convergence occurs in total variation.

Let us also mention that one cannot hope to replace \eqref{innerproduct}  by something like
$$
\langle f(x), x(-1)\rangle\le -\sigma|x(-1)|,\quad |x(-1)|\ge M.
$$
Indeed, the delayed Ornstein--Uhlenbeck equation \eqref{OU} satisfies this assumption, but it does not have an invariant measure.

Let us move on to our second main result that concerns subgeometrical convergence.
\begin{Theorem}\label{T:2}
Suppose that Assumptions \textbf{\ref{A:1}} and \textbf{\ref{A:3}} hold. Then SFDE \eqref{mainSDDE} has a unique invariant measure $\pi$ and
the transition probabilities $P_t(x,\cdot)$ converge to it subexponentially in the Wasserstein metric. That is, for any $\rho>0$ there exist $C>0$, $\lambda_1>0$, $\lambda_2>0$ such that for all $x\in\C$ we have
\begin{equation}\label{subexpwas}
W_{d_\rho}(P_t(x,\cdot),\pi)\le C e^{\lambda_1 |x(0)|^\alpha+\lambda_1 D(x)^2}e^{-\lambda_2 t^{\alpha/(2-\alpha)}},\quad t\ge0.
\end{equation}
Moreover, if additionally Assumption \textbf{\ref{A:4}} holds, then the convergence in the Wasserstein metric in \eqref{subexpwas} can be replaced
by convergence in total variation metric.
\end{Theorem}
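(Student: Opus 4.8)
The plan is to deduce Theorem~\ref{T:2} from an abstract ergodic criterion for Markov chains in the spirit of \cite{HMS11} and \cite{Bu14}: fix a time step $t_0>r$, pass to the skeleton chain $(X_{nt_0})_{n\ge0}$, and verify three conditions --- (i)~a subgeometric Lyapunov drift inequality $\E[V(X_{t_0})\mid X_0=x]\le V(x)-c\,\varphi(V(x))+K$ with $\varphi$ concave; (ii)~generalized ($d_\rho$-)smallness of the sublevel sets of $V$; (iii)~a $d_\rho$-contractivity of $P_{t_0}$ on those sublevel sets --- after which the criterion yields a unique invariant measure $\pi$ together with a quantitative convergence rate, which one transfers back to continuous time using Assumption~\textbf{\ref{A:1}} to control the oscillation of $t\mapsto X_t$ between skeleton times. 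Since Theorem~\ref{T:2} is the subgeometric analogue of Theorem~\ref{T:1}, conditions (ii)--(iii) are handled exactly as there and use only Assumption~\textbf{\ref{A:1}}; the genuinely new input is the Lyapunov function in~(i), and that is where essentially all of the work lies.

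For (ii)--(iii): given a bounded set $B\subset\C$ and $\rho>0$, I would run $X$ from $x$ and let $Y$ solve \eqref{mainSDDE} from $y\in B$ driven by $W$ perturbed by the control $t\mapsto\lambda g^{-1}(Y_t)\big(X(t)-Y(t)\big)$, the associated change of measure being legitimate since $\sup_x\norm{g^{-1}(x)}<\infty$ and $f$ is bounded on bounded sets. Writing $Z(t)=X(t)-Y(t)$ and invoking the one-sided Lipschitz bound of \textbf{\ref{A:1}} yields a Halanay-type delay differential inequality for $|Z(t)|^2$, from which $\E\|X_t-Y_t\|^2\to0$ and hence $\P(\|X_{mt_0}-Y_{mt_0}\|\le\rho)$ is bounded below, uniformly over $B$, for large $m$; this gives $d_\rho$-smallness, and the same estimate gives $d_\rho$-contractivity. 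Under the extra Assumption~\textbf{\ref{A:4}} the diffusion coefficients of the two solutions coincide whenever $X(t)=Y(t)$ (as $g$ only sees the current value), so once $Y(t)$ has been steered onto $X(t)$ the two solutions can be kept identical by a further bounded Girsanov drift --- here one uses that $f$ is globally Lipschitz --- which upgrades $d_\rho$-smallness to genuine ($d_{TV}$-)smallness and delivers the total-variation statement.

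The heart of the matter, and the main obstacle, is to construct in~(i) a Lyapunov function $V\colon\C\to[1,\infty)$ satisfying the one-step inequality with $\varphi(v)\asymp v(\log v)^{-\gamma}$, $\gamma=2(1-\alpha)/\alpha$; this exponent is forced by requiring the resulting subgeometric rate to be $\exp(-\lambda_2 t^{\alpha/(2-\alpha)})$, since for such a $\varphi$ the primitive $\int^{v}du/\varphi(u)$ grows like $(\log v)^{\gamma+1}=(\log v)^{(2-\alpha)/\alpha}$. I would take $V(x)=\exp\big(\lambda_1|x(0)|^{\alpha}+\lambda_1 D(x)^2\big)$ for a small $\lambda_1>0$ (modified near the origin so that $V\ge1$), the first term controlling the current position, the second the oscillation of the segment that encodes the memory, and this is precisely the prefactor appearing in \eqref{subexpwas}. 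To check the drift inequality over the window $[0,t_0]$ I would split according to whether $D(X_s)\le\kappa(|X(s)|)$. On this ``good'' event Assumption~\textbf{\ref{A:3}} gives $\langle f(X_s),X(s)\rangle\le-\sigma|X(s)|^{\alpha}$, so an It\^o computation for $|X(s)|^{\alpha}$ produces a drift of order $-|X(s)|^{2\alpha-2}$ (negative, as $\alpha<1$); composing with $\exp(\lambda_1\cdot)$, with $\lambda_1$ small enough that the second-order It\^o term is subdominant, and using that $D$ stays $O(1)$ there so that $\log V\asymp\lambda_1|X(s)|^{\alpha}$, this becomes exactly the subgeometric gain $-cV(\log V)^{-\gamma}$. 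On the complementary ``large-diameter'' event, where \textbf{\ref{A:3}} is unavailable, I would use that $f$ and $g$ are globally bounded: over a window of length $r$ the martingale increments are sub-Gaussian, so $D(X_{t_0})$ has Gaussian tails \emph{independent of the past} and $\E\exp(\lambda_1 D(X_{t_0})^2)<\infty$, while $\E\exp(\lambda_1|X(t_0)|^{\alpha})\le C\exp(\lambda_1|x(0)|^{\alpha})$ by $(a+b)^{\alpha}\le a^{\alpha}+b^{\alpha}$; since a large value of $D(x)$ makes $V(x)$ itself large, one gets $\E[V(X_{t_0})]\le C\exp(\lambda_1|x(0)|^{\alpha})\le V(x)-\varphi(V(x))+K$ on that event as well. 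The hypothesis $\kappa(z)/\sqrt{\log z}\to\infty$ enters precisely in the good regime: it guarantees that $\P\big(D(X_s)>\kappa(|X(s)|)\big)\lesssim\exp(-c\kappa(|X(s)|)^2)$ decays faster than $(\log V)^{-\gamma}\asymp|x(0)|^{-\alpha\gamma}$, while on such a rare excursion $D(X_{t_0})$ nevertheless remains $O(1)$-sub-Gaussian, so these excursions do not spoil the net subgeometric decrease. For the large-diameter estimates I would adapt the comparison technique of \cite{Sch84} to the present vector-valued, delayed setting.

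Combining (i)--(iii) in the abstract theorem gives existence and uniqueness of $\pi$ together with $W_{d_\rho}(P_t(x,\cdot),\pi)\le C\,V(x)\,e^{-\lambda_2 t^{\alpha/(2-\alpha)}}$, which is \eqref{subexpwas} because $V(x)=\exp(\lambda_1|x(0)|^{\alpha}+\lambda_1 D(x)^2)$, and with Assumption~\textbf{\ref{A:4}} the same statement with $d_\rho$ replaced by $d_{TV}$. The delicate point throughout is the bookkeeping in step~(i): one must balance the favourable drift on typical trajectories against the rare large-diameter excursions so that the net effect is a true subgeometric inequality with the sharp exponent $\gamma=2(1-\alpha)/\alpha$, rather than merely the boundedness of $\E V$.
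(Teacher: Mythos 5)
Your overall architecture coincides with the paper's: reduce to a skeleton chain, verify $d_\rho$-smallness and contractivity from Assumption \textbf{\ref{A:1}} alone (the paper imports these from \cite{HMS11} and feeds them into \cite[Theorem~2.1]{Bu14}; the total-variation upgrade is done there via a Harnack inequality rather than your explicit coupling, but that is a cosmetic difference), and concentrate all the work in a subgeometric drift condition with $\Psi(v)\asymp v(\log v)^{-(2-2\alpha)/\alpha}$. Your identification of the exponent, of the role of $\kappa(z)/\sqrt{\log z}\to\infty$, and of the good/bad splitting according to $D(X_s)\lessgtr\kappa(|X(s)|)$ all match the paper.

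The gap is in the choice of Lyapunov function: $V(x)=\exp\bigl(\lambda_1|x(0)|^\alpha+\lambda_1D(x)^2\bigr)$ does \emph{not} satisfy the drift inequality you need, and the failure is exactly at the ``delicate point'' you flag. In the small-diameter regime the only available gain is the vanishing factor $1-\rho_1|x(0)|^{2\alpha-2}$ coming from the It\^o computation for $e^{\lambda_1|X(1)|^\alpha}$, so every other contribution must be $1+o\bigl(|x(0)|^{2\alpha-2}\bigr)$ multiplicatively. But the factor $\exp(\lambda_1D(\cdot)^2)$ regenerates over one time step by a multiplicative constant bounded away from $1$: by non-degeneracy of $g$ one has $\E_x e^{\lambda_1D(X_1)^2}\ge1+\eta$ uniformly in $x$, with $\eta>0$ independent of $x$. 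Taking $x$ constant ($D(x)=0$) with $|x(0)|=R$ large gives $V(x)=e^{\lambda_1R^\alpha}$ while $\E_xV(X_1)\ge e^{\lambda_1R^\alpha}(1+\eta/2)$ for all large $R$, which is incompatible with $\E_xV(X_1)\le V(x)-c\,V(x)(\log V(x))^{-(2-2\alpha)/\alpha}+K$. (This is invisible in the exponential case, where the gain is a fixed factor $1-\rho$, but fatal here where the gain degenerates.) The paper's remedy is to truncate the diameter term: it works with $V(x)=\exp\bigl(\lambda_1|x(0)|^\alpha+\lambda_2(D(x)^2-\psi(|x(0)|))_+\bigr)$ for an auxiliary concave $\psi$ with $\psi(z)/\log z\to\infty$ and $\kappa(z)^2/\psi(z)\to\infty$. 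Then in the good regime the diameter factor contributes only $1+Ce^{-2\lambda_2\psi(|x(0)|)}=1+o(|x(0)|^{2\alpha-2})$, which preserves the It\^o gain, while in the regime $D(x)^2\ge R+\psi(|x(0)|)$ one still gets $\E_xV(X_1)\le V(x)/2$. Since the truncated $V$ is dominated by $\exp(\lambda_1|x(0)|^\alpha+\lambda_2D(x)^2)$, the prefactor in \eqref{subexpwas} is unaffected; reverse-engineering $V$ from that prefactor, as you did, is what led you to the wrong function. The rest of your outline goes through once this substitution is made.
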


We see that in the subgeometrical case it is also enough to check the extended
Ve\-re\-ten\-ni\-kov--Khasminskii condition only for trajectories with not too big diameter. The explanation is the same. It is worth mentioning that since the drift $f$  ``pushes'' to the origin weaker than in the exponential case, one has to check \eqref{cond} for a slightly bigger set of $x$ than just ``typical trajectories''.

We also would like to mention that the obtained rate of convergence to infinity in the right-hand side of
\eqref{subexpwas} matches the corresponding rate for the SDE case. The latter cannot be improved, see \cite[Section~7.1]{H10}.

The proofs of Theorems~\ref{T:1} and \ref{T:2} are postponed till Section 4.

\medskip

\textbf{Convention on constants.} Throughout the paper, we denote by $C$ a positive constant
whose value may change from line to line.

\section{Examples and Counterexamples}\label{S:3}

In this section we present a number of examples showing how the theoretical results from Section~\ref{S:2} can be used for studying convergence of SFDEs. In addition to it, we provide
some counterexamples that show the optimality (in a certain sense) of Assumptions \textbf{\ref{A:2}} and \textbf{\ref{A:3}}.

We begin with the following example.
\begin{Example}\label{E:31}
Let $d=m=1$. Consider an equation
\begin{equation}\label{SFDE1}
dX(t)=h(X(t-r)) dt + g (X_t)dW(t),\quad t\ge0,
\end{equation}
where the memory  $r\ge0$, $h\colon\R\to\R$ is a smooth function such that $h(z)=-|z|^\gamma\sign z $ for $|z|\ge1$, 
$\gamma\in(-1,1)$ and the diffusion $g$ is bounded Lipschitz and non-degenerate.

Clearly, Assumption \textbf{\ref{A:1}} holds and hence equation \eqref{SFDE1} has a unique strong solution. Let us check \textbf{\ref{A:2}} and \textbf{\ref{A:3}}. 
Put $\kappa(z):=z^{(1+\gamma)/2}$. Note that there exists large enough $M_0$
such that for any $x\in\C$ with $D(x)\le \kappa(|x(0)|)$ and  $|x(0)|\ge M_0$ we have
\begin{equation}\label{sootn}
|x(-r)|\ge |x(0)|-D(x)\ge |x(0)|-|x(0)|^{(1+\gamma)/2}\ge1
\end{equation}
and hence $h(x(-r))=-\sign(x(-r))|x(-r)|^\gamma$. 

Take now any $x\in\C$ with $D(x)\le \kappa(|x(0)|)$ and $|x(0)|\ge M_0$. Using \eqref{sootn}, we derive
\begin{align*}
h(x(-r))x(0)&=-\sign(x(-r))|x(-r)|^\gamma x(0)\\
&= -\sign(x(-r))|x(-r)|^\gamma (x(-r)+x(0)-x(-r))\\
&\le -|x(-r)|^{\gamma+1}+|x(-r)|^{\gamma}D(x)\\
&\le -\bigl|x(0)-|x(0)|^{(1+\gamma)/2}\bigr|^{\gamma+1}+(|x(0)|^\gamma+D(x)^\gamma+1) D(x)\\
&\le -\bigl|x(0)-|x(0)|^{(1+\gamma)/2}\bigr|^{\gamma+1}+3|x(0)|^{(1+\gamma)/2 + (\gamma\vee0)}.
\end{align*}
This implies that there exists large enough $M>M_0$, such that if $|x(0)|\ge M$ and $D(x)\le \kappa(|x(0)|)$, then 
\begin{equation*}
h(x(-r))x(0)\le -\frac12|x(0)|^{\gamma+1}.
\end{equation*}
Now if $\gamma\in[0,1]$, then \textbf{\ref{A:2}} holds. Therefore, by Theorem~\ref{T:1} SFDE \eqref{SFDE1} has a unique invariant measure and converges to it exponentially in the Wasserstein metric.

If $\gamma\in(-1,0)$, then \textbf{\ref{A:3}} holds. In this case we apply Theorem~\ref{T:2}. We obtain that \eqref{SFDE1} still has a unique invariant measure but converges to it subexponentially with the rate given in \eqref{subexpwas}. 
\qed
\end{Example}

\begin{Remark} In Example~\ref{E:31} it was crucial that it was sufficient to check condition \eqref{innerproduct} or \eqref{cond}  only for $x\in \C$ with not ``too large'' diameter. Evidently, these conditions  are not satisfied for \textbf{all} $x\in\C$. Thus, the exponential/subexponential ergodicity of \eqref{SFDE1} cannot be obtained by \cite[Remark~5.2]{HMS11} or 
\cite[Theorem~3.3]{Bu14}.
\end{Remark}

\begin{Example}\label{E:33}
Using  the same method we can study more general equations. Let $d,m\in\N$, $r\ge0$. We are interested in ergodic properties of the SFDE
\begin{equation}\label{boleeslozh}
dX(t)=h\Bigl(\int_{-r}^0 X(t+s) \mu(ds)\Bigr)dt + g (X_t)dW(t),\quad t\ge0,
\end{equation}
where $h:\R^d\to\R^d$ is a smooth function with $h(z):=-z|z|^{\gamma-1}$ for $|z|>1$, $\gamma\in(-1,1)$; $\mu$ is a finite signed measure with $\mu([-r,0])>0$; $g$ is as in Example~\ref{E:31}. We see that the drift and diffusion of equation \eqref{boleeslozh} satisfy Assumption \textbf{\ref{A:1}} and thus this equation has a unique strong solution. 

In order to verify \textbf{\ref{A:2}} and \textbf{\ref{A:3}}, we  choose  again $\kappa(z):=z^{(1+\gamma)/2}$. 
We consider the Jordan decomposition of the measure $\mu$:
$$
\mu=\mu^+-\mu^-,
$$
where $\mu^+$ and $\mu^-$ are two finite nonnegative measures, and note that for any $x\in\C$ with $D(x)\le \kappa(|x(0)|)$
we have 
\begin{align*}
\Bigl| \int_{-r}^0 x(s) \mu(ds)\Bigr|\ge& \Bigl| \int_{-r}^0 x(s) \mu^+(ds)\Bigr|- \Bigl| \int_{-r}^0 x(s) \mu^-(ds)\Bigr|\\
\ge& \mu^+([-r,0])|x(0)|- \int_{-r}^0 |x(s)-x(0)| \mu^+(ds)\\
&-\mu^-([-r,0])|x(0)|- \int_{-r}^0 |x(s)-x(0)| \mu^-(ds)\\
\ge& |x(0)|\mu([-r,0])-D(x)(\mu^+([-r,0])+\mu^-([-r,0]))\\
\ge& c_1|x(0)|-c_2|x(0)|^{(1+\gamma)/2},
\end{align*}
where $c_1=\mu([-r,0])$ and $c_2=(\mu^+([-r,0])+\mu^-([-r,0]))$. By our assumptions, we have $c_1>0$. 
The verification of \textbf{\ref{A:2}} and \textbf{\ref{A:3}} is completed exactly as in Example~\ref{E:31}. Thus, applying Theorems~\ref{T:1} and \ref{T:2}, we obtain that $X$ has a unique invariant measure and converges to it exponentially if $\gamma\in[0,1)$ or subexponentially if
$\gamma\in[-1,0)$.\qed
\end{Example}


\medskip
Now we move on and present some counterexamples to demonstrate a certain optimality of the conditions in Theorems~\ref{T:1} and \ref{T:2}.

First, we consider the case $\beta=0$. The next example shows that in this case it may happen that 
no invariant measure exists if the drift and diffusion satisfy all the conditions of Theorem~\ref{T:1} with the only exception that the condition $\lim_{z \to \infty}\kappa(z) = \infty$ in Assumption \textbf{\ref{A:2}} is replaced by $\liminf_{z \to \infty}\kappa (z) \ge N$, where $N>0$ is an arbitrarily large constant.

\begin{Example}\label{E:32}
Let $d=m=1$, $r=2$, $N>0$. Put $\kappa(z):=N$, $z\ge0$. Consider an equation
$$
dX(t)=f(X_t)dt + dW(t),\quad t\ge0
$$
where $f$ is a Lipschitz continuous function which takes values in $[-1,A]$, and satisfies $f(x)=A$ whenever $D(x)\ge N +1$
and $\langle f(x),x(0)\rangle \le - |x(0)|$ whenever $D(x) \le N$ and $|x(0)|\ge 1$. We claim that
$A>0$ can be chosen in such a way that for every fixed initial condition we have
\begin{equation}\label{limhotim}
\lim_{t \to +\infty} X(t)=+\infty\quad \text{a.s.}
\end{equation}
This would imply in particular that $X$ does not have an invariant measure.

To verify the claim we fix the initial condition $x\in\C$ and introduce an auxiliary sequence
\begin{equation*}
Y(n):=x(0)-n+W(n)+ A \sum_{i=1}^{n-1}  \I\bigl(W(i)-W(i-1)\ge N+2\bigr),\quad n\in\Z_+.
\end{equation*}
Since the drift $f$ is bounded from below by $-1$, we derive for $n\in\Z_+$, $n\ge1$
\begin{align}\label{diffXY}
X(n+1)-X(n)&=W(n+1)-W(n)+\int_{n}^{n+1} f(X_t)dt\nn\\
&\ge W(n+1)-W(n)-1+A\I \bigl(\inf_{t\in[n,n+1]}D(X_t) \ge N+1)\nn\\
&\ge W(n+1)-W(n)-1+A\I \bigl(X(n)-X(n-1)\ge N+1\bigr)\nn\\
&\ge W(n+1)-W(n)-1+A\I \bigl(W(n)-W(n-1)\ge N+2\bigr)\nn\\
&=Y(n+1)-Y(n),
\end{align}
where we also used the fact that the memory $r=2$ and hence $X(n)-X(n-1)\ge N+1$ implies that $D(X_t)\ge N+1$ for all $t\in[n,n+1]$.
Recall that by definition $X(0)=x(0)=Y(0)$ and $X(1)\ge x(0)-1+W(1)=Y(1)$. Therefore \eqref{diffXY} implies $X(n)\ge Y(n)$ for any $n\in\Z_+$.

By the strong law of large numbers,
\begin{equation*}
Y(n)/n\to -1 + A \P(\xi\ge N+2),\quad\text{a.s. whenever $n\to\infty$},
\end{equation*}
where $\xi$ denotes the standard Gaussian random variable. Hence by taking large enough $A$ we get $Y(n)/n\to 1$, a.s. as $n\to\infty$. Since $X(n)\ge Y(n)$, this yields
\eqref{limhotim}; thus the claim is proved and the process $X$ does not have an invariant measure.\qed
\end{Example}

Next we consider the case $\beta\in(0,1)$. We show that the condition $\lim_{z \to \infty}\kappa(z)z^{-\beta}=\infty$
in \textbf{\ref{A:2}} cannot be replaced with the condition  $\liminf_{z \to \infty}\kappa (z)z^{-\beta} \ge N$. Since we need to construct an example 
with unbounded  $\kappa$,  the proof here will be different from the proof in Example~\ref{E:32}.

\begin{Example} Let $\beta\in(0,1)$, $N>1$. Put $\kappa(z):=(N-1) z^\beta$, $z\ge0$. Consider the following stochastic delay equation for $d=m=1$, $r=2$:
\begin{equation}\label{sfde33}
d X(t)=f(X_t)dt + dW(t),  
\end{equation}
where $f$ is a Lipschitz continuous function such that $f(x)=5 N x(0)^\beta$ if $x(0) \ge 1$ and  $D(x) \ge  N x(0)^\beta$. Similar to Example~\ref{E:32}, one can easily extend $f$ in such a way that
Assumptions \textbf{\ref{A:1}} and \textbf{\ref{A:2}} hold with the only exception that condition \eqref{innerproduct} is satisfied for all $x\in \C$ with $D(x) \le (N-1) |x(0)|^\beta$ and $|x(0)| \ge 1$. Let us prove that SFDE \eqref{sfde33} does not have an invariant measure. 

Put $z_0:=(2N)^{1/(1-\beta)}$. We need the following technical statement.
\begin{Lemma}\label{L:technolemma}
Let $x\in \C$ be such that for some $t_1, t_2\in[-2,0]$ we have $x(t_2)\ge z_0$ and 
\begin{equation*}
x(t_1)\ge x(t_2)+2 N x(t_2)^\beta.
\end{equation*}
Then $D(x)\ge N |x(0)|^\beta$.
\end{Lemma}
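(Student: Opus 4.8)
The plan is to reduce everything to two elementary lower bounds for $D(x)$ and then to split the argument into two cases according to the size of $|x(0)|$ relative to $x(t_2)$. The starting observation is immediate from the hypothesis: since $t_1,t_2\in[-2,0]$, the definition of $D$ gives
\[
D(x)\ge x(t_1)-x(t_2)\ge 2N\,x(t_2)^\beta .
\]
This already disposes of the case $|x(0)|\le 2x(t_2)$: using $\beta<1$, so that $2^\beta\le 2$, one gets
\[
N|x(0)|^\beta\le N\bigl(2x(t_2)\bigr)^\beta\le 2N\,x(t_2)^\beta\le D(x),
\]
which is the assertion. Note that the hypothesis $x(t_2)\ge z_0$ plays no role here.

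It remains to handle the case $|x(0)|>2x(t_2)$. The idea is to drop $t_1$ and instead exploit the gap between $x(0)$ and $x(t_2)$. Since $x(t_2)\ge z_0>0$, the reverse triangle inequality gives
\[
D(x)\ge |x(0)-x(t_2)|\ge |x(0)|-x(t_2)>|x(0)|-\tfrac12|x(0)|=\tfrac12|x(0)|.
\]
On the other hand $|x(0)|>2x(t_2)\ge 2z_0$, and $z_0=(2N)^{1/(1-\beta)}$ is chosen precisely so that $z_0^{1-\beta}=2N$; hence $|x(0)|^{1-\beta}>(2z_0)^{1-\beta}\ge z_0^{1-\beta}=2N$, i.e.\ $\tfrac12|x(0)|>N|x(0)|^\beta$. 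Combining the two displays yields $D(x)>N|x(0)|^\beta$, which finishes this case and the proof.

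I do not anticipate a genuine obstacle: the statement is an elementary case split, and the only point that needs a little care is the bookkeeping in the second case — checking that $|x(0)|>2x(t_2)$ together with $x(t_2)\ge z_0$ forces both $D(x)>\tfrac12|x(0)|$ (via the reverse triangle inequality, using $x(t_2)>0$) and $|x(0)|^{1-\beta}\ge 2N$, the latter being exactly where the choice of $z_0$ enters. Everything else is routine.
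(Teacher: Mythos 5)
Your proof is correct. Both case distinctions are exhaustive and every inequality checks out: in the case $|x(0)|\le 2x(t_2)$ you only need $D(x)\ge x(t_1)-x(t_2)\ge 2Nx(t_2)^\beta$ together with $2^\beta\le 2$, and in the case $|x(0)|>2x(t_2)$ the reverse triangle inequality (valid for either sign of $x(0)$, since $x(t_2)>0$) gives $D(x)>\tfrac12|x(0)|$, while $|x(0)|>2x(t_2)\ge 2z_0>z_0$ and $z_0^{1-\beta}=2N$ give $\tfrac12|x(0)|>N|x(0)|^\beta$.

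Your route is genuinely different from the paper's. The paper splits on the sign of $x(0)$: for $x(0)<0$ it bounds $D(x)\ge x(t_2)-x(0)\ge N^{1/(1-\beta)}+|x(0)|$ and then argues by a further sub-case on whether $|x(0)|^{1-\beta}$ exceeds $N$; for $x(0)\ge 0$ it uses the subadditivity $(a+b)^\beta\le a^\beta+b^\beta$ applied to $x(0)\vee x(t_1)$, absorbing the term $Nx(t_2)^\beta$ into $\tfrac12\bigl((x(0)\vee x(t_1))-x(t_2)\bigr)$ via the hypothesis, and checking that the resulting prefactor $\frac{N}{(2N)^{1-\beta}z_0^{\beta(1-\beta)}}+\frac12$ equals $1$ by the choice of $z_0$. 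Your split on the size of $|x(0)|$ relative to $x(t_2)$ avoids the subadditivity computation entirely and treats both signs of $x(0)$ uniformly in each case, so it is somewhat shorter and more transparent; the paper's version makes more explicit where the exact value $z_0=(2N)^{1/(1-\beta)}$ is sharp (the prefactor coming out to exactly $1$), whereas in your argument the same constant enters only through the crude bound $|x(0)|^{1-\beta}>2N$. Both proofs are elementary and both use the hypothesis $x(t_2)\ge z_0$ only in the ``far'' case.
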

\begin{proof}
We consider two different cases. If $x(0)<0$, then
\begin{equation*}
D(x)\ge x(t_2)-x(0)\ge N^{1/(1-\beta)}+|x(0)|= N|x(0)|^\beta \bigl(\frac{N^{\beta/(1-\beta)}}{|x(0)|^\beta}+
\frac{|x(0)|^{1-\beta}}N\bigr).
\end{equation*}
If now $|x(0)|^{1-\beta}>N$, then by above $D(x)\ge N|x(0)|^\beta$. If $|x(0)|^{1-\beta}\le N$, then 
$N^{\beta/(1-\beta)}\ge |x(0)|^\beta$ and, again, by above $D(x)\ge N|x(0)|^\beta$.

If  $x(0)\ge0$ we derive 
\begin{align*}
N x(0)^\beta&\le N (x(0)\vee x(t_1))^\beta\\
&\le N ((x(0)\vee x(t_1))-x(t_2))^\beta+ N x(t_2)^\beta\\
&\le ((x(0)\vee x(t_1))-x(t_2))\bigl(\frac{N}{(x(t_1)-x(t_2))^{1-\beta}}+\frac12\bigr)\\
&\le D(x)\bigl(\frac{N}{(2N)^{1-\beta}z_0^{\beta(1-\beta)}}+\frac12\bigr)\\
&=D(x).\qedhere
\end{align*}
\end{proof}

Now we go back to our equation \eqref{sfde33}. Define the ``bad'' set 
\begin{equation*}
G:=\{x\in\C: x(-1)\ge z_0 \text{\,\, and \,\,} x(0)\ge x(-1)+2 N x(-1)^\beta\}.
\end{equation*}
Let us prove that if the process $X$ starts with any initial condition from $G$, then it tends to infinity with positive probability.

Put $\tau:=\inf\{t\ge0: X(t)=1\}$ and $W_*:=\inf_{s\in[0,1]} W(s)$. Note that if $X_0\in G$, then, thanks to Lemma~\ref{L:technolemma}, we have $D(X_s)\ge N|X(s)|^\beta$ for any $s\in[0,1]$. Hence, it follows from the definition of $f$ that 
\begin{equation*}
\P_x\bigl(f(X_{s\wedge\tau})=5 N X(s\wedge\tau)^\beta \text{\,\,for every $s\in[0,1]$}\bigr)=1,
\end{equation*}
for any $x\in G$. This and \eqref{sfde33} imply that if $X_0\in G$, then for any $s\in[0,1]$ we have 
\begin{equation}\label{estxs}
X(s\wedge\tau)\ge X(0)+W(s\wedge\tau)\ge X(0)+W_*. 
\end{equation}
Therefore on the set  $\{W_*\ge -X(0)+1\}$ we have $\tau\ge1$. We employ this observation together with 
\eqref{estxs} to deduce for any $x\in G$ 
\begin{align*}
\P_x(X_1\in G)&=\P_x\bigl(X(1)\ge x(0)+2 N x(0)^\beta\bigr)\\
&\ge\P_x\bigl(X(1)\ge x(0)+2 N x(0)^\beta,\,\, W_*\ge -x(0)^\beta/2\bigr)\\
&\ge\P_x\bigl(5N(x(0)+W_*)^\beta+W_*\ge 2N x(0)^\beta,\,\, W_*\ge -x(0)^\beta/2\bigr)\\
&=\P(W_*\ge -x(0)^\beta/2\bigr)\\
&\ge 1- 2\exp\{-x(0)^{2\beta}/8\},
\end{align*}
where in the fourth transition we used the fact that $x(0)\ge 1$ and hence
$$
5N(x(0)+W_*)^\beta+W_*\ge5N(x(0)-x(0)^\beta/2)^\beta-x(0)^\beta/2\ge x(0)^\beta(5N-1)/2\ge 2 N x(0)^\beta,
$$
whenever $W_*\ge - x(0)^\beta/2$.

We apply the Markov property of $X$ to get for any $x\in G$
\begin{equation}\label{finalest}
\P_x(X_n\in G \text{ for all $n\in\Z_+$})\ge 1- 2\sum_{n=0}^\infty\exp\{-y_n^{2\beta}/8\},
\end{equation}
where we defined recursively $y_0:=x(0)$ and $y_n:=y_{n-1}+2N y_{n-1}^\beta$. Since $\beta>0$ and $y_n\ge x(0)+n$, we see that there exists large enough $Z_0\ge0$ such that the right--hand side of \eqref{finalest} is positive whenever $x(0)\ge Z_0$. Thus for any $x\in G':=G\cap \{x(0)\ge Z_0\}$ we have $\P_x(\lim_{n\to\infty}X(n)=+\infty)>0$. This implies by \cite[Theorem~3a and 3c]{Sch84} that $X$ does not have an invariant measure.\qed
\end{Example}

\begin{Example}
Finally, let us mention that the condition that the diffusion $g$ depends on $x$ only through $x(0)$ in Assumption~\textbf{\ref{A:4}} also cannot be dropped. Indeed, consider again SFDE \eqref{SFDE1} with $\gamma=0$, 
$g(x)=\wt g(x(-r))$, $r>0$, $x\in\C$ and $\wt g\colon\R\to\R_+$ is a bounded increasing and strictly positive function. This equation satisfies Assumptions \textbf{\ref{A:1}} and \textbf{\ref{A:2}} and its drift is Lipschitz. Nevertheless, as shown in \cite{Sch05} this equation converges to its invariant measure only weakly and not in total variation. Hence without this additional assumption, one cannot replace convergence in the Wasserstein metric in \eqref{expwas} and \eqref{subexpwas} by the convergence in total variation.
\end{Example}

\section{Proofs of the Theorems~\ref{T:1} and \ref{T:2}}
Till the end of this section without loss of generality and to simplify the notation we assume that the memory $r=1$. In Section~\ref{S:GT} we establish  general lemmas that are
useful for the proofs of our main results. In Sections~\ref{S:42} and \ref{S:43} we prove Theorems~\ref{T:1} and \ref{T:2}.

\subsection{General tools}\label{S:GT}

First let us verify that the strong solution to SFDE \eqref{mainSDDE} has indeed a  Markov property. Whilst this statement is well--known for the case of  Lipschitz drift and diffusion,
we were not able to find in the literature the proof of the Markov property of SFDE in the case of the one--sided Lipschitz drift. Thus we provide it here for the sake of completeness.

\begin{Proposition}\label{P:MP}
Suppose that Assumption \textbf{\ref{A:1}} holds. Then the unique strong solution to \eqref{mainSDDE} $X=(X_t)_{t\ge0}$ is a strong Markov process with the state space $(\C,\B(\C))$.
\end{Proposition}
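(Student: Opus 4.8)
The plan is to run the classical It\^o-equation argument in the present functional setting, since existence and uniqueness of a strong solution is already provided by \cite{RS08}: first I would establish pathwise uniqueness, then represent the solution as a fixed measurable functional of its initial segment and of the driving Brownian path, and finally deduce the strong Markov property from the independence (``freezing'') lemma for conditional expectations. Assumption~\textbf{\ref{A:1}} is needed in an essential way only for pathwise uniqueness, where it has to be exploited through the one-sided Lipschitz bound rather than a two-sided one. The step I expect to be the real obstacle is the construction of a single measurable solution map valid for \emph{all} initial conditions, because the textbook Yamada--Watanabe machinery is usually stated for coefficients with a more standard uniqueness property.

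\emph{Pathwise uniqueness.} Let $X,Y$ solve \eqref{mainSDDE} on $[s,\infty)$ with the same $\F_s$-measurable initial segment $X_s=Y_s$. Applying It\^o's formula to $|X(t)-Y(t)|^2$, bounding $2\la f(X_t)-f(Y_t),X(t)-Y(t)\ra+\norm{g(X_t)-g(Y_t)}^2\le 2C\|X_t-Y_t\|^2$ by Assumption~\textbf{\ref{A:1}} (via $\la a,b\ra\le\la a,b\ra_+$), and handling the stochastic integral by Burkholder--Davis--Gundy and Young's inequality, one arrives at
\[
\Delta(t):=\E\sup_{s\le u\le t}|X(u)-Y(u)|^2\le C\int_s^t\Delta(u)\,du,\qquad t\ge s,
\]
where one also uses $\|X_u-Y_u\|^2\le\sup_{s\le v\le u}|X(v)-Y(v)|^2$ (the two solutions agree on $[s-1,s]$). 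Gronwall's lemma forces $\Delta\equiv0$, i.e.\ pathwise uniqueness; the same computation applies with $s$ replaced by any stopping time $\tau$ and $X_\tau=Y_\tau$. The parallel estimate for solutions started from deterministic data $x,y\in\C$ gives in addition $\E\|X^x_t-X^y_t\|^2\le C_t\|x-y\|^2$, hence a Feller-type (even Lipschitz) continuity of $x\mapsto P_t(x,\cdot)$ in the quadratic Wasserstein distance; this is a byproduct not needed below.

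\emph{Measurable solution map and cocycle identity.} For a stopping time $\tau$ set $W^\tau(\cdot):=W(\tau+\cdot)-W(\tau)$; by the strong Markov property of Brownian motion $W^\tau$ is an $m$-dimensional Brownian motion, independent of $\F_\tau$ and equal in law to $W$. Using pathwise uniqueness and strong existence, the functional version of the Yamada--Watanabe correspondence provides a measurable map $\Phi$, from $\C$ times the space of $\R^m$-valued continuous paths vanishing at the origin into $C([-1,\infty),\R^d)$, such that $\Law(\Phi(x,W))=\Law(X^x)$ for every deterministic $x\in\C$, and, more generally, for any $\F_\tau$-measurable $\C$-valued $\xi$ the unique strong solution of \eqref{mainSDDE} on $[\tau,\infty)$ with $X_\tau=\xi$ satisfies $X_{\tau+t}=\Phi(\xi,W^\tau)_t$ a.s.\ for all $t\ge0$, with the shift notation $\zeta_t(s):=\zeta(t+s)$ consistent with $X_t(s)=X(t+s)$. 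Combining this with pathwise uniqueness applied on $[\tau,\infty)$ to $\xi=X_\tau$ (the given solution) yields the cocycle identity $X_{\tau+t}=\Phi(X_\tau,W^\tau)_t$ a.s.

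\emph{Conclusion.} Fix $t\ge0$ and a bounded measurable $\varphi\colon\C\to\R$. Since $X_\tau$ is $\F_\tau$-measurable, $W^\tau$ is independent of $\F_\tau$, and $\Phi(x,W^\tau)_t$ has law $P_t(x,\cdot)$, the independence (``freezing'') lemma for conditional expectations gives
\[
\E\bigl[\varphi(X_{\tau+t})\mid\F_\tau\bigr]=\E\bigl[\varphi\bigl(\Phi(X_\tau,W^\tau)_t\bigr)\mid\F_\tau\bigr]=(P_t\varphi)(X_\tau)\qquad\text{a.s.}
\]
Taking $\tau$ deterministic gives the ordinary Markov property; the general case is the strong Markov property. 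Since $t\mapsto X_t$ is a.s.\ continuous with values in $(\C,\|\cdot\|)$, $X$ is a genuine strong Markov process on $(\C,\B(\C))$. As for the obstacle flagged above: to sidestep invoking Yamada--Watanabe for a merely one-sided Lipschitz drift, one can first prove the Markov property for globally Lipschitz truncations $f_n$ of $f$ (where $\Phi$ is classical) and then pass to the limit using $\E\sup_{u\le t}|X^{(n)}(u)-X(u)|^2\to0$, which follows from the one-sided Lipschitz bound, taking care that the Chapman--Kolmogorov relation is preserved in the limit.
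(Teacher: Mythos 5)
Your overall strategy --- represent $X_t$ as a measurable functional of the initial segment and of the increments of the driving noise, then apply the freezing lemma --- is exactly the paper's strategy, and you correctly identify the one genuinely delicate point: producing a \emph{jointly} measurable solution map under a merely one-sided Lipschitz drift. Where you differ is in how you propose to resolve it, and there your proposal is the weakest. The paper's resolution is short and self-contained: for fixed $x$ the map $\omega\mapsto X^{(s,x)}_t(\omega)$ is measurable with respect to the increment $\sigma$-algebra $\G_{t,s}$ by the construction in \cite{RS08}; the a priori estimate $\E\|X^{(s,x)}_t-X^{(s,y)}_t\|^4\le e^{C(1+t)^2}\|x-y\|^4$ from \cite[Proposition~5.4]{HMS11} gives continuity in probability in $x$; and a measurable-modification theorem for maps from a Polish space then yields a $(\B(\C)\otimes\G_{t,s})$-measurable version $\wt\Phi$, after which strong uniqueness identifies $X_t=\wt\Phi(X_s,\cdot)$ a.s. Neither of your two alternatives reaches this point cleanly. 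The functional Yamada--Watanabe route is plausible in principle (pathwise uniqueness plus strong existence is what it needs), but the version producing a single map jointly measurable in the initial condition and compatible with random initial data is precisely the nonstandard statement you would have to source or prove. Your fallback via globally Lipschitz truncations $f_n$ has a concrete gap: a continuous drift satisfying only $\la f(x)-f(y),x(0)-y(0)\ra_+\le C\|x-y\|^2$ need not be locally Lipschitz (already $f(x)=-|x(0)|^{1/2}\sign x(0)$ fails), and since $f$ is a functional of the whole segment while the one-sided condition couples the value at $0$ with the sup norm, there is no off-the-shelf mollification or Yosida scheme that produces Lipschitz approximants together with the convergence $\E\sup_{u\le t}|X^{(n)}(u)-X(u)|^2\to0$; that step would need a real argument, not a remark.

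Two smaller points. First, your Gronwall argument for pathwise uniqueness needs a localization to guarantee $\Delta(t)<\infty$ before Gronwall applies; in any case this part only re-derives what the paper simply cites from \cite{RS08}. Second, for the strong Markov property you aim to establish the cocycle identity at an arbitrary stopping time and freeze there, which forces you to justify substituting the random segment $X_\tau$ into the solution map on $[\tau,\infty)$. The paper sidesteps this entirely: it proves only the ordinary Markov property by the freezing argument, and then upgrades to the strong Markov property by observing that the same $L^4$ estimate makes $X$ Feller, and a Feller process with continuous trajectories is strongly Markov \cite[Theorem~3.3.1]{RY}. That shortcut is worth adopting: it removes the stopping-time version of the substitution step from the proof altogether.
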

\begin{proof}
We establish the Markov property using the standard technique (see, e.g., \cite[proof of Proposition~3.4]{RRG}). The authors are grateful to Alexei Kulik for communicating the main idea of the proof.

Fix $t\ge s\ge0$ and a bounded measurable function $f\colon\C\to\R$. Introduce the filtration $\F_r:=\sigma(W(u),\, 0\le u\le r)\vee \mathcal N$, where $r\ge0$ and $\mathcal N$ denotes the collection of null-sets in $\F$. Similarly, put $\G_{r,s}:=\sigma(W(u)-W(s),\, s\le u\le r)\vee \mathcal N$, $r\ge s$. Our goal is to show that
\begin{equation}\label{goal}
\E(f(X_t)|\F_s)=\E(f(X_t)|X_s).
\end{equation}
Since the function $f$ is arbitrary, \eqref{goal} would imply the Markov property for $X$.

To establish \eqref{goal} consider the equation
\begin{equation}\label{SFDEshift}
X^{(s,x)}(r)=x(0)+\int_s^r b(X^{(s,x)}_u)du +\int_s^r \sigma(X^{(s,x)}_u)dW(u),\quad r\ge s,\, x\in\C.
\end{equation}
It follows from \cite[Theorem~2.3]{RS08} and a simple shift argument that for each fixed $x\in\C$, equation \eqref{SFDEshift} has a unique strong solution and $X^{(s,x)}_t$ is a $\G_{t,s}|\,\B(\C)$
measurable function.

Introduce now a function $\Phi\colon\C\times\Omega\to\C$, $(x,\omega)\mapsto X^{(s,x)}_t(\omega)$. By above, for any fixed $x\in\C$ the function $\Phi(x,\cdot)$ is $\G_{t,s}|\,\B(\C)$--measurable.
By \cite[Proposition~5.4]{HMS11}, there exists $C>0$ such that
\begin{equation}\label{momentbound}
\E \|{X^{(s,x)}_t}-{X^{(s,y)}_t}\|^4\le e^{C(1+t)^2} \|x-y\|^4,\quad x,y\in \C.
\end{equation}
Therefore $\Phi(x,\cdot)$ is continuous in probability with respect to $x$. Since the space $\C$ is Polish, \cite[Theorem~3.1]{Mnogo}
implies that $\Phi$ has a modification $\wt \Phi$ that is $(\B(\C)\otimes\G_{t,s})|\,\B(\C)$--measurable.

Strong uniqueness of solutions to \eqref{SFDEshift} (\cite[Theorems~2.2 and 2.3]{RS08}) yields that
\begin{equation*}
X_t(\omega)=\wt \Phi(X_s,\omega)\quad  \text{a.s.},
\end{equation*}
where we also used the fact that $X_s$ is $\F_s$ measurable and the $\sigma$-algebras $\F_s$ and $\G_{t,s}$ are independent.

Now let us prove \eqref{goal}. It follows from the measurability properties of $\wt \Phi$ established above, that $f(\wt \Phi(X_s,\cdot))$ is $\sigma(X_s, \G_{t,s})$--measurable.
Using again the independence of $\F_s$ and $\G_{t,s}$ and a standard approximation argument (see, e.g., \cite[Theorem~7.1.2]{Oks}), we derive
\begin{equation*}
\E(f(X_t)|\mathcal F_s)=  \E(f(\wt \Phi(X_s,\cdot))|\mathcal F_s)=\E f(\wt \Phi(x,\cdot))|_{x=X_s}.
\end{equation*}
Similarly,
\begin{equation*}
\E(f(X_t)| X_s)= \E(f(\wt \Phi(X_s,\cdot))| X_s)= \E f(\wt \Phi(x,\cdot))|_{x=X_s}
\end{equation*}
and therefore identity \eqref{goal} holds.

To establish the strong Markov property we employ again bound  \eqref{momentbound}. This inequality and the Portmanteau theorem imply that the process $X$ is Feller. Since it has also
continuous trajectories, it is strongly Markov \cite[Theorem~3.3.1]{RY}.
\end{proof}

As mentioned above, our approach for establishing ergodicity is based on  Lyapunov functions. The propositions below  state that if one is able to construct a ``good'' Lyapunov function,
then SFDE~\eqref{mainSDDE} possesses  all the required ergodic properties. These propositions essentially
follow from the corresponding results in \cite{Bu14} and \cite{HMS11}.

Recall that by $P_t$ we denoted the Markov semigroup associated with the strong solution to \eqref{mainSDDE}.

\begin{Proposition}\label{P:41}
Suppose that Assumption \textbf{\ref{A:1}} holds. Suppose that there exists a measurable function $V\colon\C\to\R_+$ such that $\lim_{\|x\|\to\infty}V(x)=+\infty$ and
\begin{equation}\label{Lyap}
\E_x V(X_1)\le V(x)-\Psi(V(x))+C,\quad x\in\C,
\end{equation}
where $\Psi\colon\R_+\to(0,+\infty)$ is a differentiable concave function increasing to infinity. Then SFDE \eqref{mainSDDE} has a unique invariant measure $\pi$. Furthermore, for any $\rho>0$, $\eps>0$ there exist
constants $C_1>0$, $C_2>0$ such that
\begin{equation}\label{subexwastool}
W_{d_\rho}(P_t(x,\cdot),\pi)\le \frac{C_1(1+V(x))}{\Psi(H_\Psi^{-1}(C_2t))^{1-\eps}},\quad t\ge0,\,x\in\C.
\end{equation}
Here $H_\Psi(t):=\int_1^t \frac1{\Psi(s)}\,ds$, $t\ge0$, and $H_\Psi^{-1}$ is the inverse function.
\end{Proposition}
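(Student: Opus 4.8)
The plan is to reduce the statement to the abstract ergodic theorems for Markov semigroups with a $d$-small set that are developed in \cite{HMS11} and \cite{Bu14}, using the Lyapunov function $V$ as the bridge. First I would recall from \cite[Theorems~3.1 and 3.7]{HMS11} (quoted already in Section~\ref{S:2}) that under Assumption~\textbf{\ref{A:1}} the semigroup $P_t$ has at most one invariant measure, and the transition probabilities converge weakly whenever one exists; so the whole task is to \emph{produce} an invariant measure and to quantify the rate. The standard route is: (i) verify that the sublevel sets $\{V\le R\}$ are ``good'' in the sense that, restricted to them, the one-step kernel $P_1$ contracts the distance $d_\rho$ by a fixed factor --- this is exactly the $d$-small set property --- and (ii) feed the Lyapunov inequality \eqref{Lyap} together with this $d$-smallness into the weak Harris / subgeometric machinery to get existence, uniqueness, and the bound \eqref{subexwastool}.

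For step (i), the key input is the contractivity estimate \eqref{momentbound} (more precisely its $L^1$ version, which follows from \cite[Proposition~5.4]{HMS11}): $\E\|X_1^{(x)}-X_1^{(y)}\| \le e^{C}\|x-y\|$, together with the non-degeneracy of $g$ in Assumption~\textbf{\ref{A:1}}, which gives a Girsanov/coupling argument on the interval $[0,1]$ showing that from any bounded set of initial conditions one can couple two copies of the solution so that after one unit of time they agree with positive probability depending only on the bound. This is precisely the argument behind \cite[Theorem~3.1]{HMS11} and \cite[Remark~5.2]{HMS11} for SFDEs; I would invoke it to conclude that for every $R>0$ the set $\{V \le R\}$ (which is bounded in $\C$ since $V(x)\to\infty$ as $\|x\|\to\infty$, by hypothesis) is $d_\rho$-small for a suitable $\rho$ and contraction constant. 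Since $V\to\infty$ at infinity, the Lyapunov inequality \eqref{Lyap} forces the process to return to these small sets, with the return-time tail controlled by the shape of $\Psi$.

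For step (ii), I would quote the abstract subgeometric weak Harris theorem of \cite{Bu14} (or the exponential weak Harris theorem of \cite{HMS11} in the geometric case): given a $d_\rho$-small set $\mathcal{S}$, a measurable $V\colon\C\to\R_+$ with relatively compact... rather, with $d_\rho$-small sublevel sets, and the drift condition $\E_x V(X_1) \le V(x) - \Psi(V(x)) + C\,\mathbbm{1}_{\mathcal{S}}(x)$ (which follows from \eqref{Lyap} after enlarging $C$ on the small set, using that $\Psi$ is positive and that outside $\mathcal S$ one has $\Psi(V(x)) \ge$ a positive constant), one obtains a unique invariant measure $\pi$ and the convergence rate controlled by the function $H_\Psi$. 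The concavity and monotonicity of $\Psi$ are exactly the hypotheses under which $H_\Psi(t)=\int_1^t \Psi(s)^{-1}ds$ is well-defined, increasing, and invertible, and under which the rate $\Psi(H_\Psi^{-1}(C_2 t))^{-1+\eps}$ comes out; the loss of $\eps$ in the exponent is the usual artifact of the subgeometric estimates (interpolation between the Lyapunov bound and the contraction). One also uses \eqref{Lyap} iterated to get a uniform-in-$x$ moment bound $\sup_n \E_x V(X_n) \le V(x) + C'$, which yields tightness of the Cesàro averages of $P_n(x,\cdot)$ and hence existence of $\pi$ directly, independently of the rate statement.

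The main obstacle I anticipate is step (i): translating the non-degeneracy and one-step contractivity into a genuine $d_\rho$-small set property for the \emph{infinite-dimensional}, non-locally-compact state space $\C$, and checking that the constants can be taken uniform over each bounded sublevel set $\{V\le R\}$. The moment bound \eqref{momentbound} and the boundedness of $g^{-1}$ are exactly what make this work, but one must be careful that the coupling success probability on $[0,1]$ does not degenerate as the initial conditions range over a (bounded but infinite-dimensional) set --- this is handled in \cite{HMS11} and I would cite it rather than reprove it. Everything else --- the algebra relating $\Psi$, $H_\Psi$, and the rate, and the passage from \eqref{Lyap} to a drift condition with the indicator of the small set --- is routine bookkeeping with concave functions.
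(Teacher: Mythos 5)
Your route is the same as the paper's: combine the Lyapunov drift condition \eqref{Lyap} with the fact that bounded subsets of $\C$ are $d$-small in the Wasserstein sense (imported from \cite[Sections~5.1 and 5.2]{HMS11}), feed both into the abstract weak Harris theorem \cite[Theorem~2.1]{Bu14} applied to a skeleton chain $P_{n_0}$, and transfer the rate back to continuous time; the bookkeeping with $\Psi$, $H_\Psi$ and the loss of $\eps$ is exactly as you describe. One correction to your step (i), though: the coupling behind the smallness property cannot make the two copies \emph{agree} with positive probability after one unit of time. Under Assumption \textbf{\ref{A:1}} alone the diffusion may depend on the whole past segment, so the quadratic variations of the two solutions differ and the laws of the segments are mutually singular --- this is the reconstruction property emphasized in the introduction, and it is precisely why the paper only obtains a Wasserstein contraction $W_{d_\delta}(P_{n_0}(x,\cdot),P_{n_0}(y,\cdot))\le\gamma\, d_\delta(x,y)$ on bounded sets (the Girsanov argument of \cite{HMS11} drives the copies \emph{close} in $\|\cdot\|$ over a possibly long interval $[0,n_0]$, not necessarily $n_0=1$, which suffices because $d_\delta$ is truncated at $1$), and why the conclusion \eqref{subexwastool} is in $W_{d_\rho}$ rather than in total variation; the latter needs the extra Assumption \textbf{\ref{A:4}} and Proposition~\ref{P:42}. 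Since you delegate this step to the citation and only use the (correct) Wasserstein-smallness conclusion, this mis-description does not break the argument, and the remainder --- iterating \eqref{Lyap} for the skeleton chain and invoking \cite{Bu14} --- matches the paper's proof.
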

\begin{proof}
Fix $\rho>0$. It follows from \cite[Sections 5.1 and 5.2]{HMS11} that for some $n_0\in\N$, $\delta\in(0,\rho)$
we have
\begin{equation}\label{thanks1}
W_{d_\delta}(P_{n_0}(x,\cdot),P_{n_0}(y,\cdot))\le d_\delta(x,y),\quad x,y\in\C.
\end{equation}
and for any $N>0$ there exists $\gamma\in(0,1)$ such that
\begin{equation}\label{thanks2}
W_{d_\delta}(P_{n_0}(x,\cdot),P_{n_0}(y,\cdot))\le \gamma d_\delta(x,y),\quad x,y\in\C,\, \|x\|\le N,\, \|y\|\le N.
\end{equation}

Consider now an auxiliary skeleton Markov chain with the state space $(\C,d_\delta)$ and transition kernel
$$
\wt P(x,A):=P_{n_0}(x,A),\quad x\in\C,\, A\in\B(\C).
$$

Let us check that this chain satisfies all the conditions of \cite[Theorem~2.1]{Bu14}. By iterating
\eqref{Lyap} $n_0$ times, we see that
\begin{equation*}
\int_\C  V(y) \wt P(x,dy)=\E_x V(X_{n_0}) \le V(x)-\Psi(V(x))+n_0 C,\quad x\in\C.
\end{equation*}
Therefore the first condition of \cite[Theorem~2.1]{Bu14} holds. As explained above, the space $(\C,d_\delta)$
is a complete separable metric space, therefore the second condition is also met. It follows from estimates \eqref{thanks1}, \eqref{thanks2}, and our assumption $\lim_{\|x\|\to\infty}V(x)=+\infty$ that the third and the fourth conditions of
\cite[Theorem~2.1]{Bu14} are also satisfied.

Thus, all conditions of \cite[Theorem~2.1]{Bu14} are met. Hence the skeleton chain has a unique invariant measure $\pi$,
and there exist constants $C_1>0$, $C_2>0$ such that
\begin{equation*}
W_{d_\rho}(\wt P_n(x,\cdot),\pi)\le W_{d_\delta}(\wt P_n(x,\cdot),\pi)\le \frac{C_1(1+V(x))}{\Psi(H_\Psi^{-1}(C_2n))^{1-\eps}},\quad n\in\Z_+,\,x\in\C,
\end{equation*}
where we also used the fact that $d_\rho\le d_\delta$. Now by a standard argument (see, e.g., \cite[p.~550]{Bu14}), we see that the measure $\pi$ is also a unique invariant measure for our original Markov kernel
$P_t$ and that bound \eqref{subexwastool} holds.
\end{proof}

\begin{Proposition}\label{P:42}
Assume that all  conditions of Proposition~\ref{P:41} are met. Suppose additionally that Assumption
\textbf{\ref{A:4}} is satisfied.  Then the convergence in the Wasserstein metric in \eqref{subexwastool} can be replaced
by convergence in total variation metric.
\end{Proposition}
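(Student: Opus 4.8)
The plan is to upgrade the Wasserstein bound \eqref{subexwastool} to a total-variation bound by establishing a \emph{total-variation smallness} (or $d$-smallness in the discrete metric) of bounded sublevel sets $\{\|x\|\le N\}$ for the skeleton chain $\wt P$, and then re-running the argument of \cite[Theorem~2.1]{Bu14} with the discrete metric $d(x,y)=\I(x\ne y)$ in place of $d_\delta$. Concretely, I would first note that the Lyapunov estimate \eqref{Lyap} is unchanged, so the return-time / sublevel-set structure is the same; what must be replaced is the local contraction \eqref{thanks2}, which needs to become a statement of the form: there is $n_1\in\N$ and, for each $N>0$, a constant $\gamma_1=\gamma_1(N)\in(0,1)$ with
\begin{equation*}
d_{TV}\bigl(P_{n_1}(x,\cdot),P_{n_1}(y,\cdot)\bigr)\le \gamma_1,\quad \|x\|\le N,\ \|y\|\le N.
\end{equation*}
Given such an estimate together with \eqref{Lyap}, the abstract result of \cite{Bu14} (applied now in the discrete metric, noting that $d_\rho\le 1=d$) yields exactly the bound \eqref{subexwastool} with $W_{d_\rho}$ replaced by $d_{TV}$, hence the claim.

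The main work, and the main obstacle, is therefore proving the displayed total-variation smallness under Assumption~\textbf{\ref{A:4}} --- i.e.\ when $g(x)=\bar g(x(0))$ depends on the past only through the present value and $f$ is globally Lipschitz. The key point is that with $g$ depending only on $x(0)$ the reconstruction property disappears and a genuine coupling that makes the two solutions \emph{equal} on an event of positive probability becomes possible. I would argue as follows. Over the time interval $[0,1]$ the delay terms $f(X_u)$ and $g(X_u)=\bar g(X(u))$ for $u\in[0,1]$ depend on the history $X_0=x$ on $[-1,0]$ (which is frozen and differs for the two initial conditions) only through $f$, since $\bar g$ sees only the running position; over $[1,2]$ even the drift no longer sees the original history. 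So I would run a synchronous-type coupling of the two copies driven by the same Brownian motion on $[0,1]$ to bring $X(1)$ and $Y(1)$ close together (using the Lipschitz bounds and the moment estimate \eqref{momentbound} to control $\|X_1-Y_1\|$ with probability bounded below uniformly on $\|x\|,\|y\|\le N$), and then, on $[1,2]$, use non-degeneracy of $g$ (Assumption~\textbf{\ref{A:1}}: uniformly bounded right inverse of $g$, which since $g(x)=\bar g(x(0))$ is just a non-degeneracy of the SDE coefficient) to perform a Girsanov / Lindvall--Rogers reflection-type coupling that makes the two paths coincide on $[1,2]$ with probability bounded below, uniformly over the (now close and bounded) values of $X_1,Y_1$. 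This gives $n_1=2$ and a uniform $\gamma_1(N)<1$. Here one must be slightly careful that once the paths meet they stay together: this is where $g$ depending only on $x(0)$ is again essential, because two solutions agreeing on a full delay window $[t-1,t]$ and driven by the same noise thereafter remain equal by strong uniqueness --- whereas if $g$ saw the past this coincidence could fail.

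An alternative, and perhaps cleaner, route avoiding an explicit two-stage coupling: observe that under Assumption~\textbf{\ref{A:4}} the pair $(X,Y)$ started from $(x,y)$ with $\|x\|,\|y\|\le N$ can be coupled so that $\P(X_2=Y_2)\ge c(N)>0$ by first using the Lipschitz structure to make $X(1),Y(1)$ close and then invoking the total-variation smallness of non-degenerate SDEs on $[1,2]$ (the law of $(X(u))_{u\in[1,2]}$ given $X_1$ near $X_0$ overlaps in total variation, a standard consequence of non-degeneracy plus boundedness of the coefficients). I would cite the relevant SDE total-variation lemma (e.g.\ from \cite{Ver88} or the small-set results in \cite{MT09}) rather than reprove it. Either way, once the uniform TV-smallness of bounded sublevel sets is in hand, the remainder is a verbatim repetition of the proof of Proposition~\ref{P:41} with $d_\delta$ replaced by the discrete metric, so I would state that step briefly and not grind through it. The subtle point to get right --- and the one I would flag as the crux --- is the uniformity in $N$ of the lower bound $\gamma_1(N)$ and the verification that the coupling can be made to \emph{persist}, i.e.\ that coincidence at time $2$ over a full memory window propagates forever, which is exactly where Assumption~\textbf{\ref{A:4}} (no dependence of $g$ on the past) is used in an essential way.
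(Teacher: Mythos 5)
Your overall architecture matches the paper's: keep the Lyapunov bound \eqref{Lyap}, replace the local Wasserstein contraction \eqref{thanks2} by a uniform total-variation smallness of bounded sets for a skeleton chain $\wt P=P_2$, feed this into an abstract Harris-type theorem in the discrete metric, and interpolate to continuous time using the non-expansiveness of $d_{TV}$. Where you diverge is in how the TV-smallness is obtained. The paper does not construct a coupling at all: it quotes the Harnack inequality for functional SDEs with bounded memory (\cite[Theorem~4.1]{WY11}, see also \cite[Theorem~1.1]{ERS}), valid exactly under Assumption~\textbf{\ref{A:4}}, namely $\bigl(P_t(x,A)\bigr)^p\le P_t(y,A)e^{C(1+\|x-y\|^2)}$ for $t>1$, and derives $d_{TV}(P_t(x,\cdot),P_t(y,\cdot))\le 1-(e^{-C(1+\|x-y\|^2)}\wedge p^{-1})$ in two lines; it then applies Kulik's theorems rather than \cite{Bu14} (either abstract result would do here). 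Your route buys self-containedness at the price of having to reprove, in effect, the coupling-by-change-of-measure behind that Harnack inequality.

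And that is where your sketch has a genuine gap. A Lindvall--Rogers reflection coupling followed by ``same noise plus strong uniqueness'' does not work for a delay equation: after the meeting time the two copies still carry different histories on $[0,1]$, so their drifts $f(X_t)$ and $f(Y_t)$ differ for $t\in[1,2]$ even though $X(t)=Y(t)$ at the meeting instant, and driving them with the same Brownian motion immediately separates them again. To make the segments $X_2$ and $Y_2$ coincide with positive probability one must force the paths to meet \emph{before} time $1$ and then keep them glued over a full memory window by adding a compensating drift $g^{-1}(Y(t))(f(X_t)-f(Y_t))$ to one copy, removed afterwards by Girsanov; the uniform bound on $\norm{g^{-1}}$ and the global Lipschitz continuity of $f$ are what control the Girsanov density uniformly over $\|x\|,\|y\|\le N$. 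This is precisely the construction underlying \cite{WY11}, and it is also why your alternative of citing small-set results for non-degenerate SDEs (\cite{Ver88}, \cite{MT09}) is insufficient: under \textbf{\ref{A:4}} the diffusion is Markovian but the drift is still a functional of the past, so $X(t)$ alone is not a Markov process and those results do not apply. Finally, the ``persistence'' issue you flag as the crux is actually a non-issue once the one-step smallness of $\wt P=P_2$ is established, since $d_{TV}$ is automatically non-expanding under any Markov kernel; the real crux is the glued window of length one described above.
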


\begin{proof}
We begin by observing that, thanks to the additional Assumption \textbf{\ref{A:4}}, the Markov semigroup $P_t$ satisfies
the Harnack inequality. Namely, it follows from  \cite[Theorem~4.1]{WY11} (see also \cite[Theorem~1.1]{ERS}) that for any $t>1$ and large enough $p>p_0$,
there exists $C=C(p)$ such that
\begin{equation*}
\bigl(P_t(x,A)\bigr)^p\le P_t (y,A)e^{C (1+\|x-y\|^2)},\quad x,y\in \C,\,\, A\in\B(\C).
\end{equation*}
Therefore for any $x,y\in\C$, $A\in\B(\C)$ we have
\begin{align*}
P_t(x,A)-P_t (y,A)&\le P_t(x,A)-\bigl(P_t(x,A)\bigr)^p (e^{-C (1+\|x-y\|^2)}\wedge p^{-1})\\
&\le 1- (e^{-C (1+\|x-y\|^2)}\wedge p^{-1}).
\end{align*}
Thus, we have the following bound on the total variation distance.
\begin{equation*}
d_{TV}(P_t(x,\cdot),P_t(y,\cdot))\le 1- (e^{-C (1+\|x-y\|^2)}\wedge p^{-1}),\quad x,y\in\C.
\end{equation*}

Now similar to the proof of Proposition~\ref{P:41} we fix $t=2$ and consider the skeleton Markov chain with the transition kernel
$$
\wt P(x,A):=P_{2}(x,A),\quad x\in\C,\, A\in\B(\C).
$$
It follows from the above that $\wt P$ satisfies all the assumptions of \cite[Theorem~1.15 and Theorem~1.13]{Kuni15}.
Note that we do not have to check that the skeleton Markov chain is irreducible or aperiodic, see also the related discussion in \cite[Remark~3.3]{HM11}.

Thus, if $\pi$ denotes the invariant measure of $P$ (its existence and uniqueness was already established in Proposition~\ref{P:41}),
then by \cite[Theorem~1.15 and Theorem~1.13]{Kuni15} we have
\begin{equation*}
d_{TV}(P_{2n}(x,\cdot),\pi)=d_{TV}(\wt P_n(x,\cdot),\pi)\le \frac{C_1(1+V(x))}{\Psi(H_\Psi^{-1}(C_2n))^{1-\eps}},\quad n\in\Z_+,\,x\in\C.
\end{equation*}
Therefore if $t=2n+s$, where $n\in\Z_+$ and $s\in[0,2]$, then for any $x\in\C$ we derive
\begin{equation*}
d_{TV}(P_t(x,\cdot),\pi)=d_{TV}(P_{2n+s}(x,\cdot),P_s\pi)\le d_{TV}(P_{2n}(x,\cdot),\pi)\le\frac{C_1(1+V(x))}{\Psi(H_\Psi^{-1}(C_2t))^{1-\eps}},
\end{equation*}
where we made use of the nonexpanding property of the total variation metric. This completes the proof of the proposition.
\end{proof}

The following two lemmas describe the behaviour of $D(X_t)$. These lemmas provide very important estimates
that will be used in the sequel.

\begin{Lemma}\label{L:51}
Suppose that Assumption \textbf{\ref{A:1}} holds. Assume that the drift $f$ satisfies the growth condition \eqref{bound}
with $\beta\in[0,1)$ and the diffusion $g$ is globally bounded. Then there exists a constant $C>0$ such that
for any $\lambda\ge0$ we have
\begin{equation}\label{radius}
\E_x e^{\lambda D(X_1)}\le e^{C\lambda(|x(0)|^\beta + D(x)^\beta+\lambda+1)},\quad x\in\C.
\end{equation}
Moreover, there exist $C>0$, $\lambda_0>0$ such that
\begin{equation}\label{firsteq}
\E_x e^{\lambda_0 D(X_1)^2}\le e^{C(|x(0)|^{2\beta} + D(x)^{2\beta}+1)},\quad x\in\C.
\end{equation}
Finally, there exist constants $C_1$, $C_2>0$ such that for any $z>0$ we have
\begin{equation}\label{secondeq}
\P_x(D (X_1)\ge z)\le C_1 e^{C_1|x(0)|^{2\beta} + C_1D(x)^{2\beta}-C_2 z^2},\quad x\in\C.
\end{equation}
\end{Lemma}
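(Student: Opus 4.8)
The plan is to control $D(X_1)$ by splitting the increment $X(1+s)-X(1+t)$ for $s,t\in[-1,0]$ into a drift part and a martingale part, and to bound each separately. First I would write, for $-1\le t\le s\le 0$,
\begin{equation*}
X(1+s)-X(1+t)=\int_{1+t}^{1+s}f(X_u)\,du+\int_{1+t}^{1+s}g(X_u)\,dW(u),
\end{equation*}
so that $D(X_1)\le \int_0^1|f(X_u)|\,du + 2\sup_{0\le v\le 1}|M(v)|$ where $M(v):=\int_0^v g(X_u)\,dW(u)$ (after shifting time, and using that a running maximum minus running minimum of a path is at most twice its maximal deviation from the value at time $0$, plus the corresponding bound on $[{-1},0]\cap[0,1]=\{0\}$-type boundary terms; more directly, $D(X_1)\le 2\sup_{u,v\in[0,1]}|X(u)-X(v)|$ which is again split as above). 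Since $g$ is globally bounded, say $\norm{g(\cdot)}\le K$, the quadratic variation of each component of $M$ on $[0,1]$ is at most $K^2$, so $M$ is a time-changed Brownian motion run for time at most $K^2$; the Dubins–Schwarz representation together with the reflection principle gives Gaussian-type tails for $\sup_{0\le v\le 1}|M(v)|$, hence $\E_x\exp(\lambda_0\sup_v|M(v)|^2)\le C$ for a small fixed $\lambda_0>0$, uniformly in $x$, and likewise $\E_x\exp(\lambda\sup_v|M(v)|)\le e^{C(\lambda^2+1)}$ for all $\lambda\ge0$.

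The main obstacle is the drift term $\int_0^1|f(X_u)|\,du$, because $f$ is only sublinear and $\|X_u\|$ a priori depends on the whole past. The key point is the growth bound \eqref{bound}: $|f(X_u)|\le C(1+\|X_u\|^\beta)$, and for $u\in[0,1]$,
\begin{equation*}
\|X_u\|\le \|X_0\|\vee \sup_{0\le v\le 1}|X(v)| \le |x(0)|+D(x)+\sup_{0\le v\le 1}|X(v)-X(0)|,
\end{equation*}
so I would set $R:=\sup_{0\le v\le 1}|X(v)-X(0)|$ and note, using $\beta<1$ and Young's inequality, that
\begin{equation*}
\int_0^1|f(X_u)|\,du\le C\bigl(1+|x(0)|^\beta+D(x)^\beta+R^\beta\bigr).
\end{equation*}
But $R\le \int_0^1|f(X_u)|\,du+\sup_v|M(v)|$, so writing $A:=\int_0^1|f(X_u)|\,du$ we get $A\le C(1+|x(0)|^\beta+D(x)^\beta) + C(A+\sup_v|M(v)|)^\beta$. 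Since $\beta<1$, for any $\eps>0$ there is $C_\eps$ with $Cu^\beta\le \eps u+C_\eps$, so $A\le C(1+|x(0)|^\beta+D(x)^\beta)+\tfrac12 A+\tfrac12\sup_v|M(v)|+C$, hence
\begin{equation*}
A\le C\bigl(1+|x(0)|^\beta+D(x)^\beta\bigr)+\sup_{0\le v\le 1}|M(v)|.
\end{equation*}
Combining, $D(X_1)\le C(1+|x(0)|^\beta+D(x)^\beta)+3\sup_{0\le v\le1}|M(v)|$ pathwise.

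The three displayed estimates then follow by taking exponential moments of this pathwise bound. For \eqref{radius}: $\E_x e^{\lambda D(X_1)}\le e^{C\lambda(1+|x(0)|^\beta+D(x)^\beta)}\,\E_x e^{3\lambda\sup_v|M(v)|}\le e^{C\lambda(1+|x(0)|^\beta+D(x)^\beta)}e^{C(\lambda^2+1)}$, which is of the claimed form $e^{C\lambda(|x(0)|^\beta+D(x)^\beta+\lambda+1)}$. For \eqref{firsteq}: by $(a+b)^2\le 2a^2+2b^2$, $D(X_1)^2\le C(1+|x(0)|^{2\beta}+D(x)^{2\beta})+C\sup_v|M(v)|^2$, so choosing $\lambda_0$ small enough that $C\lambda_0\le\lambda_0^{\mathrm{gauss}}$ (the Gaussian threshold above) yields $\E_x e^{\lambda_0 D(X_1)^2}\le e^{C(|x(0)|^{2\beta}+D(x)^{2\beta}+1)}$. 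Finally \eqref{secondeq} is just Chebyshev applied to \eqref{firsteq}: $\P_x(D(X_1)\ge z)\le e^{-\lambda_0 z^2}\E_x e^{\lambda_0 D(X_1)^2}\le e^{C|x(0)|^{2\beta}+CD(x)^{2\beta}+C}e^{-\lambda_0 z^2}$, which has the stated form with $C_1=\max(e^C,C,\lambda_0^{-1}\cdot 0\ \text{dummy})$ and $C_2=\lambda_0$. The only genuinely delicate points are the self-improving estimate for $A$ (which crucially uses $\beta<1$ and would fail for $\beta=1$, consistent with the delayed Ornstein–Uhlenbeck counterexample) and making the constant in the Gaussian tail bound for $\sup_v|M(v)|^2$ explicit enough to fix $\lambda_0$; both are routine once set up as above.
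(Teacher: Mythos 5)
Your proof is correct and follows essentially the same route as the paper: the same decomposition $D(X_1)\le 2\int_0^1|f(X_s)|\,ds+2\sup_{t\le1}|M(t)|$, the same absorption of the sublinear drift contribution using $\beta<1$ (the paper absorbs $D(X_1)^\beta$ into $D(X_1)$ rather than $A^\beta$ into $A$, but it is the same self-improving estimate), the same Dambis--Dubins--Schwarz plus Gaussian-tail argument for the stochastic integral, and Chebyshev for \eqref{secondeq}. The only slip is that your martingale bound $e^{C(\lambda^2+1)}$ should be $e^{C(\lambda+\lambda^2)}$ (which is what the reflection-principle computation actually yields) so that the final estimate is genuinely of the stated form $e^{C\lambda(\cdots)}$ as $\lambda\to0$.
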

\begin{proof}
We begin by observing that for any $x\in\C$
\begin{equation}\label{step1}
D(X_1)\le 2\sup_{0\le t \le 1}|X(t)-X(0)|\le 2\int_0^1 |f(X_s)|\,ds+2\sup_{0\le t\le 1} |M(t)|,
\end{equation}
where we denoted $M(t):=\int_0^t g (X_s)\,d W(s)$. We make use of the growth condition \eqref{bound} and the estimate
$D(X_s)\le D(x)+D(X_1)$, valid for all $s\in[0,1]$, to derive
\begin{align}\label{step1.5}
\int_0^1 |f(X_s)|\,ds&\le C\int_0^1(\|X_s\|^\beta+1)\,ds\le
C(|x(0)|^\beta+D(x)^\beta+D(X_1)^\beta+1)\nn\\
&\le \frac14 D(X_1)+C|x(0)|^\beta+C D(x)^\beta+C,
\end{align}
where we also used the fact that $\beta<1$ and hence for some $C_\beta>0$ one has $Cz^\beta\le z/4+C_\beta$ for all $z\ge0$. Substituting \eqref{step1.5}
into \eqref{step1}, we get
\begin{equation}\label{step2}
D(X_1)\le C |x(0)|^\beta+C D(x)^\beta+C\sup_{0\le t\le 1} |M(t)|+C.
\end{equation}
To estimate the exponential moments of $\sup_{0\le t\le 1} |M(t)|$ we use the Dambis--Dubins--Schwarz theorem and the global boundedness of $g$. It follows that
\begin{equation*}
M(t)= (B^{1}(\tau_1),\dots,B^d(\tau_d)),\quad t\in[0,1],
\end{equation*}
where $B^1, B^2,\dots,B^d$ are (possibly dependent) one-dimensional Brownian motions and
\begin{equation*}
\tau_i=\tau_i(t):=\int_0^t \sum_{j=1}^m (g^{ij} (X_s))^2\,d s\le C_g,\quad t\in [0,1], \, i=1,2,\hdots,d.
\end{equation*}
and the constant $C_g$ does not depend on $x$. Thus,
\begin{equation*}
\sup_{t \in [0,1]} |M(t)| \le  \sum_{i=1}^d\sup_{t \in [0,C_g]}|B^i(t)|. 
\end{equation*}
Therefore, we apply the Cauchy--Schwarz inequality to get for any $\lambda\ge0$, $x\in\C$
\begin{align*}
\E_x \exp\{\lambda \sup_{t \in [0,1]}|M(t)|\}&\le\E\exp\bigl\{\lambda
\sum_{i=1}^d\sup_{t \in [0,C_g]}|B^i(t)|\bigr\} \le\E\exp\{d\lambda \sup_{t \in [0,C_g]}|B(t)|\}\\
&=\E\exp\{C\lambda|B(1)|\}\le \exp\{C(\lambda + \lambda^2)\},
\end{align*}
where by $B$ we denoted a standard Brownian motion. This together with \eqref{step2} implies \eqref{radius}.

Arguing as above, we see that  there exists constants $C>0$, $\lambda_0>0$ such that for any $x\in\C$ we have
\begin{equation*}
\E_x \exp\{\lambda_0 \sup_{t \in [0,1]}M(t)^2\}<C.
\end{equation*}
This together with \eqref{step2} implies \eqref{firsteq}.

Estimate \eqref{secondeq} follows directly from \eqref{firsteq} and the Chebyshev inequality.
\end{proof}

\begin{Lemma}\label{L:52}
Suppose that the assumptions of Lemma~\ref{L:51} hold. Let $\psi\colon\R_+\to\R_+$ be an increasing continuous concave function such that $\psi(t)\le C_\psi(t+1)$ for some $C_\psi\ge1$ and any $t\in\R_+$.
Then there exist constants $C_1>0$, $C_2>0$ such that for any $t\in[0,1]$ and $x\in\C$ with
$D(x)\le \psi(|x(0)|)/( 4 C_\psi)$ we have
\begin{equation}\label{psiest}
\P_x(D (X_t)\ge \psi(|X(t)|))\le C_1 e^{C_1|x(0)|^{2\beta} + C_1 D(x)^{2\beta}-C_2 \psi(|x(0)|)^2}.
\end{equation}
\end{Lemma}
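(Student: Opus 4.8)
The plan is to reduce, by purely geometric reasoning, the event $\{D(X_t)\ge\psi(|X(t)|)\}$ to the event that the running oscillation of the path on $[0,t]$ is large, and then to control the latter through the tail estimate \eqref{secondeq} of Lemma~\ref{L:51}. Throughout put
$$S_t:=\sup_{0\le u\le t}|X(u)-x(0)|,\qquad b:=\frac{\psi(|x(0)|)}{8C_\psi},$$
recalling that $X(0)=x(0)$. I would dispose of the regime $|x(0)|<1$ first: there $D(x)\le\psi(1)/(4C_\psi)$ and $\psi(|x(0)|)\le\psi(1)$ are bounded, so the left-hand side of \eqref{psiest} is at most $1$, and this is dominated by its right-hand side as soon as $C_1$ is chosen large enough relative to $C_2$ and $\psi(1)$. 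So assume $|x(0)|\ge1$ from here on.

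The first ingredient is the deterministic bound $D(X_t)\le D(x)+2S_t$, valid for every $t\in[0,1]$: in $D(X_t)=\sup_{u_1,u_2\in[t-1,t]}|X(u_1)-X(u_2)|$ one splits according to whether each $u_i$ lies in $[t-1,0]$ (where $X$ coincides with $x$) or in $[0,t]$, and the three resulting cases are bounded by $D(x)$, by $2S_t$, and by $D(x)+S_t$ respectively. The geometric heart of the argument is then to show that on the event $\{S_t\le b\}$ one has $D(X_t)<\psi(|X(t)|)$, so that $\{D(X_t)\ge\psi(|X(t)|)\}\subseteq\{S_t>b\}$. Since $\psi(z)\le C_\psi(z+1)$ and $|x(0)|\ge1$ give $b\le(|x(0)|+1)/8\le|x(0)|/4$, on $\{S_t\le b\}$ we get $|X(t)|\ge|x(0)|-S_t\ge\tfrac34|x(0)|>0$; concavity of $\psi$ with $\psi(0)\ge0$ yields $\psi(z)\ge\frac{z}{|x(0)|}\psi(|x(0)|)$ for $z\in[0,|x(0)|]$, hence $\psi(|X(t)|)\ge\psi(|x(0)|-S_t)\ge\tfrac34\psi(|x(0)|)$, whereas
$$D(X_t)\le D(x)+2S_t\le\frac{\psi(|x(0)|)}{4C_\psi}+2b=\frac{\psi(|x(0)|)}{2C_\psi}\le\tfrac12\psi(|x(0)|)<\tfrac34\psi(|x(0)|)\le\psi(|X(t)|),$$
using $D(x)\le\psi(|x(0)|)/(4C_\psi)$ and $C_\psi\ge1$.

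It then remains to estimate $\P_x(S_t>b)$. Here I would use $S_t\le S_1\le D(X_1)$ — the first inequality because $t\le1$, the second because $D(X_1)=\sup_{u_1,u_2\in[0,1]}|X(u_1)-X(u_2)|\ge\sup_{u\in[0,1]}|X(u)-X(0)|=S_1$ — so that $\P_x(D(X_t)\ge\psi(|X(t)|))\le\P_x(S_t>b)\le\P_x(D(X_1)\ge b)$. Applying \eqref{secondeq} of Lemma~\ref{L:51} with $z=b$ gives $\P_x(D(X_1)\ge b)\le C_1e^{C_1|x(0)|^{2\beta}+C_1D(x)^{2\beta}-C_2b^2}$; since $b^2=\psi(|x(0)|)^2/(64C_\psi^2)$, absorbing the factor $1/(64C_\psi^2)$ into $C_2$ yields exactly \eqref{psiest}.

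I expect the crux to be the geometric reduction: the cutoff $b$ must be small enough that $S_t\le b$ keeps $|X(t)|$ comparable to $|x(0)|$ (so that $\psi(|X(t)|)$ does not collapse), yet it must stay of order $\psi(|x(0)|)$ so that \eqref{secondeq} at level $b$ still produces decay $e^{-C_2\psi(|x(0)|)^2}$. The interplay of the growth bound $\psi(z)\le C_\psi(z+1)$, of the hypothesis $D(x)\le\psi(|x(0)|)/(4C_\psi)$, and of the concavity of $\psi$ is exactly what makes the choice $b=\psi(|x(0)|)/(8C_\psi)$ go through, and the only real care needed is in tracking the constants and in treating the low regime $|x(0)|<1$ so that the estimate holds with no restriction on $|x(0)|$.
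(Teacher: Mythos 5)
Your proof is correct and follows essentially the same route as the paper's: a deterministic reduction of the event $\{D(X_t)\ge\psi(|X(t)|)\}$ to the event that $D(X_1)$ exceeds a level of order $\psi(|x(0)|)$, followed by the tail bound \eqref{secondeq}. The only differences are cosmetic — you work with the running supremum $S_t$ and the concavity bound $\psi(\lambda z)\ge\lambda\psi(z)$ plus a separate treatment of $|x(0)|<1$, whereas the paper uses $D(X_t)\le D(x)+D(X_1)$ together with the subadditivity $\psi(|X(0)|)\le\psi(|X(t)|)+\psi(D(X_t))$ and absorbs the small-$|x(0)|$ regime into an additive $-1$.
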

\begin{proof}
First, let us note that for any $t\in[0,1]$ we have $|X(t)|\ge|X(0)|-D(X_t)$. Hence
\begin{equation*}
\psi(|X(t)|)\ge \psi(|X(0)|)-\psi(D(X_t)).
\end{equation*}
Therefore, using the condition $\psi(s)\le C_\psi(s+1)$, $s\in\R_+$, we derive for any $t\in[0,1]$,
$x\in\C$ with $D(x)\le \psi(|x(0)|)/(4C_\psi)$
\begin{align*}
\P_x\bigl(D (X_t)\ge \psi(|X(t)|)\bigr)\le& \P_x\bigl((C_\psi+1)D (X_t)+C_\psi\ge \psi(|X(0)|)\bigr)\\
\le&\P_x\bigl(D (X_t)\ge \frac{\psi(|X(0)|)}{2C_\psi}-1\bigr)\\
\le&\P_x\bigl(D (X_1)\ge \frac{\psi(|X(0)|)}{4C_\psi}-1\bigr),
\end{align*}
where in the last inequality we used the assumption $D(x)\le \psi(|x(0)|)/(4C_\psi)$ and the estimate $D(X_t)\le D(X_1)+D(x)$. Now the application of estimate \eqref{secondeq} yields \eqref{psiest}.
\end{proof}

\subsection{Proof of Theorem~\ref{T:1}}\label{S:42}

To prove Theorem~\ref{T:1} we use the following Lyapunov function:
\begin{equation}\label{vau}
V(x):=\exp\big\{\lambda |x(0)|+ \bigl(D(x)-\gamma |x(0)|^\beta\bigr)_+\big\},\quad x\in\C,
\end{equation}
where the parameters $\lambda>0$, $\gamma>0$ are to be set later. To avoid technicalities we assume that the function $\kappa$
from Assumption \textbf{\ref{A:2}} is increasing and concave. Clearly, this is not a restriction at all: if Assumption \textbf{\ref{A:2}} is satisfied, then there exists an increasing concave function $\wt \kappa$ such that \textbf{\ref{A:2}} is also satisfied with $\wt \kappa$ in place of $\kappa$.  It follows that there exists a constant $C_\kappa\ge1$ such that $\kappa(t)\le C_\kappa(t+1)$ for any $t\ge0$.

First, let us prove that $V$ is a Lyapunov function on the set where $D(x)$ is relatively big compared with $|x(0)|$. The heuristics here is as follows. As we explained in the introduction, it is not typical for the process $X$ to have a large diameter. Thus,  $D(X_t)$ will decrease with high probability. This will also cause the decrease of the Lyapunov function $V$. Formally we have the following lemma.

\begin{Lemma}\label{three}
Suppose that Assumptions \textbf{\ref{A:1}} and \textbf{\ref{A:2}} hold. Let $V$ be the Lyapunov function defined in \eqref{vau}. Then for any  $\lambda>0$, $\gamma>0$, $\eps\in(0,1)$ there exist $L>0$ and $B>0$ such that
\begin{equation}\label{mainest3}
\E_x V(X_1) \le  \eps  V(x),
\end{equation}
for any $x\in \C$ such that $D(x)\ge L$ and $D(x)\ge B|x(0)|^\beta$.

Further, for any $\lambda>0$, $\gamma>0$, $L>0$, $R>0$ we have
\begin{equation}\label{lemmasup}
\sup_{\substack{x\in\C, |x(0)|\le R\\ D(x)\le L}}\E_x V(X_1)<\infty.
\end{equation}

\end{Lemma}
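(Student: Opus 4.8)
The plan is to estimate $\E_x V(X_1)$ directly by splitting the expectation according to whether the diameter $D(X_1)$ stays small (of order $|X(1)|^\beta$) or becomes large, and on each part use the exponential diameter bounds from Lemma~\ref{L:51}. Recall $V(x) = \exp\{\lambda|x(0)| + (D(x) - \gamma|x(0)|^\beta)_+\}$, and the regime of interest is $D(x) \ge L$, $D(x) \ge B|x(0)|^\beta$ for parameters $L, B$ to be chosen large. First I would record the pointwise bounds $|X(1)| \le |x(0)| + D(x) + D(X_1)$ and $(D(X_1) - \gamma|X(1)|^\beta)_+ \le D(X_1)$, so that $V(X_1) \le \exp\{\lambda|X(1)| + D(X_1)\} \le \exp\{\lambda|x(0)| + \lambda D(x) + (\lambda+1)D(X_1)\}$. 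The key point is that in the regime $D(x) \ge B|x(0)|^\beta$, the term $\lambda D(x)$ is \emph{not} present in $V(x)$ unless it exceeds $\gamma|x(0)|^\beta$; since $\log V(x) = \lambda|x(0)| + D(x) - \gamma|x(0)|^\beta$ here, we have $D(x) = \log V(x) - \lambda|x(0)| + \gamma|x(0)|^\beta$, so I should express everything in terms of $\log V(x)$ and hope that the $D(x)$ contribution from $V(X_1)$ is compensated by the decrease coming from the fact that $X_1$ has small diameter with high probability.

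More precisely, I would split $\E_x V(X_1) = \E_x[V(X_1)\I(D(X_1) \le \tfrac{1}{2(\lambda+1)}D(x))] + \E_x[V(X_1)\I(D(X_1) > \tfrac{1}{2(\lambda+1)}D(x))]$. On the first event, $(\lambda+1)D(X_1) \le \tfrac12 D(x)$, so $V(X_1) \le \exp\{\lambda|x(0)| + \lambda D(x) + \tfrac12 D(x)\}$; I need this to be at most $\tfrac{\eps}{2} V(x) = \tfrac{\eps}{2}\exp\{\lambda|x(0)| + D(x) - \gamma|x(0)|^\beta\}$, i.e. $\exp\{\lambda D(x) + \tfrac12 D(x) - D(x) + \gamma|x(0)|^\beta\} \le \eps/2$. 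Since $D(x) \ge B|x(0)|^\beta$ gives $\gamma|x(0)|^\beta \le (\gamma/B)D(x)$, this reduces to $\exp\{(\lambda - \tfrac12 + \gamma/B)D(x)\} \le \eps/2$, which holds for $D(x) \ge L$ with $L$ large \emph{provided} $\lambda - \tfrac12 + \gamma/B < 0$. This forces a constraint: I need $\lambda < \tfrac12$ and $B$ large enough relative to $\gamma$. Hmm — but the lemma asserts the estimate for \emph{any} $\lambda > 0$, so this naive splitting at level $\tfrac{1}{2(\lambda+1)}D(x)$ is too crude. The fix is to split at level $\theta D(x)$ for a small $\theta > 0$ to be optimized: then $V(X_1) \le \exp\{\lambda|x(0)| + \lambda D(x) + (\lambda+1)\theta D(x)\}$ on the small-diameter event, and I need $\lambda D(x) + (\lambda+1)\theta D(x) - D(x) + (\gamma/B)D(x) < 0$; choosing $\theta$ and $B$ so that $\lambda + (\lambda+1)\theta + \gamma/B < 1$, which is possible for \emph{any} fixed $\lambda$... no wait, it needs $\lambda < 1$. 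For $\lambda \ge 1$ one genuinely cannot win this way, so I suspect the intended reading is that $\lambda$ will ultimately be chosen small in the application, or the statement implicitly restricts $\lambda$; I would follow the paper's convention and simply carry $\lambda$ through, noting the smallness is used. (In fact, re-examining: the exponent of $V$ could instead be bounded via $|X(1)|^\beta$ appearing with a minus sign — but $\beta < 1$ so $\gamma|X(1)|^\beta$ cannot absorb a linear-in-$D(X_1)$ term; the genuine mechanism must be that $D(X_1)$ is small, not that the $-\gamma|X(1)|^\beta$ helps.)

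For the second (large-diameter) event I would use Cauchy--Schwarz: $\E_x[V(X_1)\I(D(X_1) > \theta D(x))] \le (\E_x V(X_1)^2)^{1/2}(\P_x(D(X_1) > \theta D(x)))^{1/2}$. By the bound on $\log V(X_1)$ above and \eqref{radius} of Lemma~\ref{L:51}, $\E_x V(X_1)^2 \le \exp\{2\lambda|x(0)| + 2\lambda D(x) + C\}\cdot\E_x e^{2(\lambda+1)D(X_1)} \le \exp\{C(|x(0)| + D(x) + 1)\}$, while \eqref{secondeq} gives $\P_x(D(X_1) > \theta D(x)) \le C_1\exp\{C_1|x(0)|^{2\beta} + C_1 D(x)^{2\beta} - C_2\theta^2 D(x)^2\}$. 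The quadratic decay $-C_2\theta^2 D(x)^2$ crushes all the linear and $2\beta < 2$ powers of $D(x)$ and $|x(0)|$ (recall $|x(0)|^\beta \le (1/B)D(x)$), so this term is $\le \tfrac{\eps}{2}V(x)$ once $D(x) \ge L$ with $L$ large; this part works for any $\lambda$. Adding the two pieces yields \eqref{mainest3}. Finally, \eqref{lemmasup} is routine: for $|x(0)| \le R$, $D(x) \le L$ we have $V(X_1) \le \exp\{\lambda(R + L + D(X_1)) + (\lambda+1)D(X_1)\}$, a constant times $e^{(2\lambda+1)D(X_1)}$, whose expectation is finite and bounded uniformly over this set by \eqref{radius}. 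The main obstacle is the first part: correctly choosing the split level $\theta$ and the threshold $B$ so that the small-diameter contribution is contractive, and being honest about the role of $\lambda$ (the argument is clean for $\lambda$ small, which is the only case needed later).
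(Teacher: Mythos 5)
Your argument has a genuine gap, and you have correctly diagnosed where it is: as written, the small-diameter part only closes when $\lambda+(\lambda+1)\theta+\gamma/B<1$, i.e.\ essentially when $\lambda<1$, whereas the lemma asserts \eqref{mainest3} for \emph{every} $\lambda>0$ (and this generality is part of the statement, not an artifact). The culprit is your pointwise bound $|X(1)|\le |x(0)|+D(x)+D(X_1)$, which is lossier than necessary. Since $r=1$ throughout Section~4, the segment $X_1$ is the path on the time interval $[0,1]$, so \emph{both} $X(0)$ and $X(1)$ lie in its range and hence $|X(1)-X(0)|\le D(X_1)$, giving the sharp bound $|X(1)|\le |x(0)|+D(X_1)$ with no $D(x)$ term. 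This is exactly the inequality the paper uses. With it, your problematic factor $e^{\lambda D(x)}$ disappears: one gets
\begin{equation*}
\E_x V(X_1)\le e^{\lambda|x(0)|}\,\E_x e^{(\lambda+1)D(X_1)},
\end{equation*}
and a single application of \eqref{radius} bounds the right-hand side by $\exp\{\lambda|x(0)|+C(\lambda+1)(|x(0)|^\beta+D(x)^\beta+\lambda+2)\}$. Comparing with $V(x)\ge \exp\{\lambda|x(0)|+D(x)-\gamma|x(0)|^\beta\}$ yields $\E_xV(X_1)\le V(x)\exp\{-D(x)+C_1|x(0)|^\beta+C_2D(x)^\beta+C_3\}$, and the exponent is $\le -D(x)/3\le -L/3$ once $B$ and $L$ are large, for \emph{any} $\lambda,\gamma$. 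In particular no splitting of the expectation and no Cauchy--Schwarz/Gaussian-tail step is needed; the subexponential growth $D(x)^\beta$ in \eqref{radius} is already dominated by the linear gain $-D(x)$.

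Your large-diameter piece and the proof of \eqref{lemmasup} are fine (the latter matches the paper's one-line argument), but they cannot rescue the small-diameter piece for $\lambda\ge1$. So the proposal as it stands does not prove the lemma as stated; the fix above is small but essential, and once made, your decomposition collapses into the paper's direct estimate.
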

\begin{proof}
Fix $\lambda>0$,  $\gamma>0$. Then for any $x\in\C$ we have
\begin{align}\label{firstep}
\E_x V(X_1)&\le \E_x \exp\{\lambda |X(1)|+D(X_1)\}\nn\\
&\le \E_x \exp\{\lambda (|X(0)|+D(X_1))+ D(X_1)\}\nn\\
&=e^{\lambda |x(0)|}\E_x e^{(\lambda+1) D(X_1)},
\end{align}
which immediately yields \eqref{lemmasup}. 

To establish \eqref{mainest3}, we apply estimate \eqref{radius} to \eqref{firstep}. We get
\begin{align}\label{promest}
\E_x V(X_1)&\le \exp\big\{\lambda |x(0)|+C(\lambda+1)(|x(0)|^\beta + D(x)^\beta +\lambda+2)\}\nn\\
&\le V(x)\exp\big\{- D(x)+C_1|x(0)|^\beta+C_2D(x)^\beta +C_3\big\},
\end{align}
where $C_1:=\gamma +C(\lambda+1)$, $C_2:=C(\lambda+1)$, and
$C_3:=C(\lambda+1)(\lambda+2)$. Now take large enough $L_0=L_0(\lambda)$ such that $C_2 z^\beta+C_3\le  z/3$ whenever $z\ge L_0$. Put $B:=3C_1$.
Then it follows from \eqref{promest} that for any $x\in \C$ with $D(x) \ge L_0$ and $D(x) \ge B|x(0)|^\beta$ we get
\begin{equation*}
\E_x V(X_1)\le V(x)e^{- D(x)/3}\le V(x) e^{- L_0/3},
\end{equation*}
which implies \eqref{mainest3}.
\end{proof}

The case when the initial diameter is ``small'' is much more complicated and more precise estimates are needed. In this case $D(X_t)$ stays at the same level, and the decrease of the Lyapunov function $V$ happens due to the decrease of $|X(t)|$. To formalize these ideas we will use the following version of the Gronwall inequality.

\begin{Lemma}\label{L:Gron}
Let $T>0$, $\theta>0$, $r\ge0$. Let $f\colon[0,T]\to\R$ be a continuous function  satisfying for any
$0\le s\le u \le T$ the following inequality:
\begin{equation*}
f(u)\le f(s)-\int_s^u (\theta f(t)-r)dt.
\end{equation*}
Then for any $t\in[0,T]$
\begin{equation}\label{newGR}
f(t)\le e^{-\theta t}f(0)+r/\theta.
\end{equation}
\end{Lemma}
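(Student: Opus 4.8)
The plan is to reduce the integral-inequality hypothesis to a differential inequality for a suitably modified function, and then integrate. First I would observe that the hypothesis, applied with $s$ fixed and $u$ variable, together with continuity of $f$, forces $f$ to be (upper) differentiable from the right, but it is cleaner to avoid pointwise differentiation of $f$ directly. Instead, define $h(t):=e^{\theta t}\bigl(f(t)-r/\theta\bigr)$ for $t\in[0,T]$; the claim \eqref{newGR} is exactly $h(t)\le h(0)$ for all $t$, i.e.\ that $h$ is nonincreasing. So the goal becomes: show $h$ is nonincreasing on $[0,T]$.

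To prove $h$ is nonincreasing, I would fix $0\le s\le u\le T$ and rewrite the hypothesis as
\begin{equation*}
f(u)-f(s)\le -\theta\int_s^u\bigl(f(t)-r/\theta\bigr)\,dt.
\end{equation*}
One clean route: set $g(t):=f(t)-r/\theta$, so $g$ is continuous and satisfies $g(u)-g(s)\le-\theta\int_s^u g(t)\,dt$ for all $0\le s\le u\le T$. For fixed $u$, the function $s\mapsto g(u)-g(s)+\theta\int_s^u g(t)\,dt$ is continuous, hence $g$ together with this inequality yields that $G(t):=e^{\theta t}g(t)$ has nonpositive right derivative everywhere: indeed
\begin{equation*}
\frac{G(u)-G(s)}{u-s}=\frac{e^{\theta u}g(u)-e^{\theta s}g(s)}{u-s},
\end{equation*}
and using $g(u)\le g(s)-\theta\int_s^u g(t)\,dt$ one gets, after letting $u\downarrow s$ (the integral term contributing $-\theta g(s)$ and the difference quotient of $e^{\theta t}g(s)$ contributing $\theta e^{\theta s}g(s)$), that $\limsup_{u\downarrow s}\frac{G(u)-G(s)}{u-s}\le 0$. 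A continuous function on an interval whose upper right Dini derivative is everywhere $\le 0$ is nonincreasing (a standard real-analysis fact; alternatively apply the same argument to $G(t)+\delta t$ for $\delta>0$ to get strict monotonicity and then let $\delta\to 0$). Since $G=h$, this gives $h(t)\le h(0)$, which is \eqref{newGR}.

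The main obstacle — really the only subtlety — is that $f$ is only assumed continuous, not differentiable, so I cannot simply differentiate the inequality; I expect the bookkeeping of passing from the integral inequality to the monotonicity of $G$ via Dini derivatives (and invoking or re-proving the lemma that a continuous function with nonpositive upper right Dini derivative is nonincreasing) to be the one place requiring care. Everything else — the change of variables $h(t)=e^{\theta t}(f(t)-r/\theta)$ and unwinding it to recover \eqref{newGR} — is routine. If one prefers to avoid Dini derivatives altogether, an alternative is a direct iteration/Picard-type bound: from $f(u)\le f(s)-\theta\int_s^u f(t)\,dt+r(u-s)$ one can bound $\sup_{[0,t]}f$ and bootstrap, but the modified-function argument above is shorter and I would present that.
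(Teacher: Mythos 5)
Your argument is correct, and it is self-contained where the paper's is not. The paper's proof is a two-line comparison argument: it introduces the explicit function $g(t)=e^{-\theta t}(f(0)-r/\theta)+r/\theta$, observes that $g$ satisfies the corresponding integral \emph{equation} with $g(0)=f(0)$, and then invokes a comparison principle for such integral inequalities (Proposition~9.2 of the cited Herzog--Mattingly paper) to conclude $f\le g$ on $[0,T]$, which gives \eqref{newGR} since $r\ge 0$. You instead use the integrating factor $e^{\theta t}$ directly: setting $G(t)=e^{\theta t}(f(t)-r/\theta)$, you deduce from the integral inequality and continuity that the upper right Dini derivative of $G$ is everywhere nonpositive, and then invoke (or reprove via the $+\delta t$ perturbation) the standard fact that such a continuous function is nonincreasing. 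The Dini-derivative computation you sketch is sound, and $G(t)\le G(0)$ unwinds to $f(t)\le e^{-\theta t}f(0)+(1-e^{-\theta t})r/\theta\le e^{-\theta t}f(0)+r/\theta$, which is in fact slightly stronger than \eqref{newGR} (and coincides with the paper's comparison function $g$). What the paper's route buys is brevity at the cost of an external black box; what yours buys is a fully elementary proof whose only imported ingredient is a textbook monotonicity criterion. Either is acceptable here.
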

\begin{proof}
Consider the function $g(t):=e^{-\theta t}(f(0)-r/\theta)+r/\theta$, $0\le t\le T$. Clearly, $g(0)=f(0)$ and for any $0\le s\le u \le T$ we have
\begin{equation*}
g(u)= g(s)-\int_s^u (\theta g(t)-r)dt.
\end{equation*}
Hence by \cite[Proposition~9.2]{HM15}, we have $f(t)\le g(t)$ for any $t\in[0,T]$. This implies \eqref{newGR}.
\end{proof}

For $B>0$, $N>0$ define
\begin{equation}\label{CBN}
\C_{B,N}:=\{x\in\C: D(x)\le B |x(0)|^\beta \text{ and }|x(0)|\ge N\}.
\end{equation}
We start treating this case with the following key lemma.
\begin{Lemma}\label{six}
Suppose that the assumptions of Lemma~\ref{three} hold. Then there exist $\nu>0$, $\rho \in (0,1)$, such that for every $B>0$ there exists $N>0$ such that
\begin{equation*}
\E_x e^{\nu|X(1)|}\le e^{\nu|x(0)|}(1- \rho)
\end{equation*}
for all $x \in \C_{B,N}$.
\end{Lemma}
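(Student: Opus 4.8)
The plan is to track $|X(t)|$ via Itô's formula applied to a smoothed version of the function $y\mapsto|y|$ and then to invoke the extended Veretennikov--Khasminskii condition \eqref{innerproduct}, but only on the favourable event that the diameter $D(X_t)$ stays small compared with $|X(t)|$ for all $t\in[0,1]$. Concretely, fix $x\in\C_{B,N}$ and set $\varphi(y):=\sqrt{|y|^2+1}$, so that $\varphi$ is smooth, $|\nabla\varphi|\le1$, $|D^2\varphi|\le C/\varphi$, and $\varphi(y)-|y|$ is bounded. By Itô's formula,
\begin{equation*}
\varphi(X(t))=\varphi(x(0))+\int_0^t\frac{\langle X(s),f(X_s)\rangle}{\varphi(X(s))}\,ds+\frac12\int_0^t\Tr\bigl(g(X_s)g(X_s)^{\!\top}D^2\varphi(X(s))\bigr)\,ds+\int_0^t\frac{\langle X(s),g(X_s)\,dW(s)\rangle}{\varphi(X(s))}.
\end{equation*}
Since $g$ is globally bounded under Assumption~\textbf{\ref{A:2}}, the second integral is bounded by $C\int_0^t\varphi(X(s))^{-1}\,ds\le Ct$, and the stochastic integral $M(t)$ has quadratic variation bounded by $C_g$; it is therefore a time-changed Brownian motion whose supremum has Gaussian tails uniformly in $x$.

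The heart of the argument is to introduce the good event
\begin{equation*}
A:=\bigl\{D(X_t)\le\kappa(|X(t)|)\text{ and }|X(t)|\ge M\text{ for all }t\in[0,1]\bigr\},
\end{equation*}
where $\kappa,M$ come from Assumption~\textbf{\ref{A:2}} (with $\kappa$ increasing concave and $\kappa(z)\ge c z^\beta$ for large $z$, using $\kappa(z)z^{-\beta}\to\infty$). On $A$, condition \eqref{innerproduct} gives $\langle X(s),f(X_s)\rangle\le-\sigma|X(s)|$ for all $s\in[0,1]$, so on $A$ the drift term in the Itô expansion is $\le-\sigma\int_0^t|X(s)|/\varphi(X(s))\,ds\le-\sigma t/2$ (once $M$ is large). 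Hence on $A$ we get $\varphi(X(1))\le\varphi(x(0))-\sigma/2+Ct+\sup_{[0,1]}|M(t)|$, i.e. $|X(1)|\le|x(0)|-\sigma/4+\sup_{[0,1]}|M(t)|$ after absorbing the bounded difference $\varphi-|\,\cdot\,|$ and the $Ct$ term (this forces $M$ large enough, chosen depending only on $\sigma,C,C_g$, not on $B$ or $x$). On the complement $A^c$ we use only the crude bound $|X(1)|\le|x(0)|+D(X_1)$ and estimate its contribution via Cauchy--Schwarz together with the Gaussian tail \eqref{firsteq} for $e^{\lambda_0 D(X_1)^2}$.

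The main obstacle — and the reason $N$ must be allowed to depend on $B$ — is controlling $\P_x(A^c)$. To bound it I would split: the event $\{|X(t)|<M\text{ for some }t\le1\}$ has probability $\le\P_x(\sup_{[0,1]}|M(t)|\ge|x(0)|-M-Ct)$, which is super-exponentially small in $|x(0)|$ once $|x(0)|\ge N$; and the event $\{D(X_t)>\kappa(|X(t)|)\text{ for some }t\le1\}$ is handled by Lemma~\ref{L:52} with $\psi=\kappa$, which requires $D(x)\le\kappa(|x(0)|)/(4C_\kappa)$ — and this is exactly where $x\in\C_{B,N}$ with $N$ large relative to $B$ is used, since $D(x)\le B|x(0)|^\beta$ while $\kappa(|x(0)|)/(4C_\kappa)\gg|x(0)|^\beta$ once $|x(0)|\ge N=N(B)$. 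Lemma~\ref{L:52} then yields $\P_x(A^c)\le C_1\exp\{C_1|x(0)|^{2\beta}+C_1 D(x)^{2\beta}-C_2\kappa(|x(0)|)^2\}$, which — again using $\kappa(z)z^{-\beta}\to\infty$ and $D(x)\le B|x(0)|^\beta$ — is $\le\exp\{-c|x(0)|^{2\beta}\}$ for $|x(0)|\ge N(B)$. Putting the pieces together,
\begin{equation*}
\E_x e^{\nu|X(1)|}\le e^{\nu|x(0)|}\Bigl(e^{-\nu\sigma/4}\,\E e^{\nu\sup_{[0,1]}|M(t)|}+\E_x\bigl[e^{\nu D(X_1)}\mathbbm 1_{A^c}\bigr]\Bigr),
\end{equation*}
and choosing $\nu>0$ small (so $\E e^{\nu\sup|M|}\le 1+C\nu$ and $e^{-\nu\sigma/4}\le1-\nu\sigma/8$) makes the first bracketed term $\le1-\nu\sigma/16$, while Cauchy--Schwarz plus the two tail estimates above make the second term $\le\exp\{-c|x(0)|^{2\beta}\}\le\nu\sigma/32$ for $|x(0)|\ge N(B)$. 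This gives the claim with $\rho:=\nu\sigma/32$.
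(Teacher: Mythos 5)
Your overall architecture (Itô's formula for a smoothed $|y|$, a good event $A$ on which the extended Veretennikov--Khasminskii condition applies pathwise, and a Cauchy--Schwarz treatment of $A^c$ using Lemmas~\ref{L:51}--\ref{L:52}) is sound, and your handling of $A^c$ is essentially correct (modulo the fact that Lemma~\ref{L:52} is stated for a fixed $t$, while you need it for all $t\in[0,1]$ simultaneously; this is repaired by noting $D(X_t)\le D(x)+D(X_1)$ and $|X(t)|\ge |x(0)|-D(X_1)$ and then invoking \eqref{secondeq}, which is exactly how Lemma~\ref{L:52} is proved). The genuine gap is in the treatment of the martingale on the good event. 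You bound $|X(1)|$ on $A$ by $|x(0)|-\sigma/4+\sup_{t\in[0,1]}|M(t)|$ and then claim that for small $\nu$,
\begin{equation*}
e^{-\nu\sigma/4}\,\E e^{\nu\sup_{[0,1]}|M(t)|}\le 1-\nu\sigma/16 .
\end{equation*}
This is false in general: $\E e^{\nu\sup_{[0,1]}|M(t)|}\ge 1+\nu\,\E\sup_{[0,1]}|M(t)|\ge 1+c_0\nu$, where $c_0$ is of the order of $\sqrt{\E\langle M\rangle_1}$ and, by the non-degeneracy of $g$, is a fixed positive constant determined by the diffusion. The product is therefore $1+\nu(c_0-\sigma/4)+O(\nu^2)$, which exceeds $1$ for all small $\nu$ whenever $c_0>\sigma/4$. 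In other words, by replacing the mean-zero increment $M(1)$ with its running supremum you destroy the martingale cancellation, and the drift gain $-\nu\sigma/4$ (which is only first order in $\nu$) can be swamped by the first-order cost of the noise. Since $\sigma$ and the bound on $g$ are independent parameters in Assumption \textbf{\ref{A:2}}, the step fails.

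The fix is to keep $M(1)$ itself: on $A$ one has $\varphi(X(1))\le\varphi(x(0))-\sigma/4+M(1)$, hence $\E_x[e^{\nu|X(1)|}\I_A]\le e^{\nu(|x(0)|+1-\sigma/4)}\E_x e^{\nu M(1)}\le e^{\nu(|x(0)|+1-\sigma/4)}e^{\nu^2C_g/2}$ by the exponential-martingale bound, and now the correction is $O(\nu^2)$ so the term $-\nu\sigma/4$ dominates for small $\nu$. This is precisely the cancellation the paper exploits, though by a different route: there Itô's formula is applied directly to $\phi(y)=e^{\lambda|y|}$, so the martingale term vanishes in expectation exactly, the Itô correction carries an extra factor $\lambda$ relative to the drift term $-\sigma\lambda\E_x\phi(X(t))$, the good/bad split is performed under the time integral via indicators (with Cauchy--Schwarz and Lemma~\ref{L:52} on the bad set), and the resulting differential inequality is closed with the Gronwall-type Lemma~\ref{L:Gron}, yielding the contraction $e^{-\theta}$. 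With the correction above your pathwise version would also go through, but as written the key inequality does not hold.
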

\begin{proof}
Recall the definition of constant $M$ from condition \eqref{innerproduct}. With such $M$ in hand, let $\lambda\in(0,1)$,  let $\phi\colon\R^d\to\R_+$ be a smooth function such that $\phi(y):=\exp(\lambda |y|)$ for $|y|\ge M$. We want to apply a version of Gronwall's lemma to the function $u \mapsto \E_x  \phi(X(u))$. 

First of all, we observe that this function is finite. Indeed, thanks to \eqref{radius}, we have
for any $x\in\C$, $0 \le  u \le 1$
$$
\E_x  \phi(X(u))\le C +\E_x e^{|X(u)|}\le C + e^{|x(0)|}\E_x e^{D(X_1)|}<\infty.
$$

We make use of assumption \textbf{\ref{A:2}} and apply Ito's lemma.
We have for $|y|\ge M$
\begin{align*}
&\frac{\d \phi (y)}{\d y_i}=\lambda e^{\lambda |y|}|y|^{-1}y_i;\\
&\frac{\d^2 \phi (y)}{\d y_i\d y_j}=\lambda^2 e^{\lambda |y|}|y|^{-2}y_iy_j-
\lambda e^{\lambda |y|}|y|^{-3}y_iy_j,\quad i\neq j;\\
&\frac{\d^2 \phi (y)}{\d y_i^2}=\lambda^2 e^{\lambda |y|}|y|^{-2}y_i^2-
\lambda e^{\lambda |y|}|y|^{-3}y_i^2+\lambda e^{\lambda |y|}|y|^{-1}.
\end{align*}
Thus, we derive for any $x\in\C$,
$0 \le s \le u \le 1$
\begin{align}\label{Ito}
\E_x  \phi(X(u))\le&\E_x  \phi(X(s)) -\sigma\lambda\int_s^u \E_x \big(\I_{\{D(X_t)\le \kappa(|X(t)|), |X(t)|>M\}}\phi(X(t))\big)\, d t \nn\\
&+
C\lambda  \int_s^u\E_x\big( \I_{\{D(X_t)> \kappa(|X(t)|), |X(t)|>M\}}\big(1+\|X_t\|^\beta\big) e^{\lambda |X(t)|}\big)\, d t \nn\\
&+
C\lambda^2  \int_s^u\E_x e^{\lambda |X(t)|}\,d t\nn\\
&+C\lambda  \int_s^u \E_x \I_{\{|X(t)|>M\}}e^{\lambda |X(t)|}|X(t)|^{-1}\,d t+C (u-s)+\E_x (M(u)-M(s))\nn\\
=:&\E_x  \phi(X(s))-\sigma\lambda I_1+C \lambda I_2+C\lambda^2 I_3\nn\\
&+ C \lambda I_4+C (u-s)+\E_x (M(u)-M(s)),
\end{align}
where we denoted
$$
M(u):=\int_0^u \sum_{i,j} \frac{\d \phi }{\d y_i}(X(t))g^{ij}(X_t)dW^j(t).
$$
We use the boundedness of $g$, the definition of $\phi$, and estimate~\eqref{radius} to derive
\begin{align*}
\E_x \langle M\rangle_u\le C + C \int_0^u \E_x e^{2|X(t)|}\,dt\le 
C + C e^{2|x(0)|}\E_x e^{2D(X_1)}<\infty.
\end{align*}
Thus $(M(t))_{t\ge0}$ is a martingale and 
\begin{equation}\label{martest}
\E_x (M(u)-M(s))=0. 
\end{equation}

To estimate $I_1$ we assume that $|x(0)|\ge M_0:=M +\kappa(M)$ in which case $|X(t)|<M$, $t  \in [0,1]$  implies
$D(X_t)\ge |X(t)-x(0)|> \kappa(M)\ge \kappa(|X(t)|)$. Therefore,
\begin{align}\label{etap}
\E_x& \Big(\I_{\{D(X_t)\le \kappa(|X(t)|),|X(t)|\ge M\}}\phi(X(t))\Big)\nn\\
&= \E_x \Big(\I_{\{D(X_t)\le \kappa(|X(t)|)\}}\phi(X(t))\Big)\nn\\
&= \E_x \phi(X(t)) - \E_x \big(\I_{\{D(X_t)> \kappa(|X(t)|)\}}\phi(X(t))\big)\nn \\
&\ge  \E_x \phi(X(t)) - \big(\P_x \big(D(X_t)> \kappa(|X(t)|)\big)\big)^{1/2}
\big(\E_x [\phi(X(t))^2]\big)^{1/2}.
\end{align}
We continue this calculation in the following way. Recall that $\kappa(z)/z^\beta\to+\infty$ as $z\to+\infty$. Therefore 
for any $B>0$ there exists $N=N(B)>0$ such that
$$
\C_{B,N}\subset \{x\in\C: D(x)\le \frac{\kappa(|x(0)|)}{4C_\kappa}\},
$$
where $C_\kappa$ was defined in the beginning of Section~\ref{S:42}. Hence Lemma \ref{L:52} implies that there exist constants $C_1$, $C_2$ such that for any $B>0$ there exists $N=N(B)>0$ such that
for any $t\in[0,1]$
\begin{equation}\label{1factor}
\P_x (D(X_t)> \kappa(|X(t)|))\le C_1e^{-C_2 \kappa(|x(0)|)^{2}},\quad x\in\C_{B,N}.
\end{equation}
By Lemma~\ref{L:51}, for any $x\in\C$, $t\in[0,1]$ we have
\begin{align*}
\E_x \phi(X(t))^2&\le C + \E_x e^{2\lambda|X(t)|}\\
&\le  C + e^{2\lambda|x(0)|} \E_x e^{2 D(X_1)}\\
&\le  C  + e^{2\lambda|x(0)|}e^{C(|x(0)|^\beta+|D(x)|^\beta+1)}.
\end{align*}
Using again the fact that $\kappa(z)/z^\beta\to+\infty$ and combining the above estimate with \eqref{etap} and
\eqref{1factor}, we see that there exist constants $C_3>0$, $C_4>0$ such that for any $B>0$ there exists $N_1=N_1(B)>0$ such that
\begin{equation}\label{I1}
I_1\ge \int_s^u \E_x \phi(X(t))\,dt-C_3e^{\lambda|x(0)|}e^{-C_4\kappa(|x(0)|)^{2}}(u-s),\quad x\in\C_{B,N_1}.
\end{equation}

Next, we estimate the integrand in  $I_2$. We estimate this term applying H\"older's inequality to the three factors. The first and third factors are estimated as above. Further, for any $x\in\C$, $t\in[0,1]$
\begin{align*}
\E_x (1+\|X_t\|^\beta)^3&\le \E_x(1+|x(0)|^\beta+D(x)^\beta+D(X_1)^\beta)^3\\
&\le C\bigl(1+|x(0)|^{3\beta}+D(x)^{3\beta} + \E_x [D(X_1)^{3\beta}]\bigr)\\
&\le  C\bigl(1+|x(0)|^{3\beta}+D(x)^{3\beta} + e^{C(|x(0)|^\beta+|D(x)|^\beta+1)}\bigr),
\end{align*}
where in the last inequality we used Lemma~\ref{L:51}. Thus, there exist constants $C_5>0$, $C_6>0$ such that for any $B>0$ there exists $N_2=N_2(B)>0$ such that
\begin{equation}\label{I2}
I_2\le C_5e^{\lambda|x(0)|}e^{-C_6\kappa(|x(0)|)^{2}}(u-s),\quad x\in\C_{B,N_2}.
\end{equation}

For $\widetilde M >M$, $x\in\C$ we estimate the integrand in $I_4$ as follows.
\begin{equation}\label{I4}
{\E_x} \big(\I_{\{|X(t)|>M|\}}\exp\{\lambda |X(t)|\}|X(t)|^{-1}\big)
\le  \widetilde M^{-1}   \E_x\phi(X(t)) +
\frac1M e^{\lambda \widetilde M}.
\end{equation}
Combining \eqref{martest}, \eqref{I1}, \eqref{I2} and \eqref{I4} with \eqref{Ito}, we see that there exist constants $C_7>0$, $C_8>0$ such that for any $B>0$ there exists $N_3=N_3(B)>0$ such that for $x\in \C_{B,N_3}$
\begin{align*}
\E_x  \phi(X(u))\le&\E_x  \phi(X(s)) -(\sigma\lambda -C\lambda^2-C\lambda\widetilde M^{-1})\int_s^u \E_x \phi(X(t))\,d t\\
&+ C_7(e^{\lambda |x(0)|}e^{-C_8\kappa(|x(0)|)^{2}}+e^{\lambda\widetilde M})(u-s).
\end{align*}
Now, we choose $\widetilde M$ large enough and $\lambda>0$ small enough so that
\begin{equation*}
\theta:=\sigma\lambda -C\lambda^2-C\lambda\widetilde M^{-1}>0.
\end{equation*}
Clearly $\theta$ is independent of $B$ and $N_3$. Recall also that we have checked in the beginning of the proof that 
$\E_x  \phi(X(u))<\infty$ for any $u\in[0,1]$. Thus, by Lemma~\ref{L:Gron} (a version of the Gronwall inequality) we get
\begin{equation*}
\E_x e^{\lambda|X(1)|}\le  \E_x\phi(X(1))+e^{\lambda M} \le e^{\lambda|x(0)|}e^{-\theta}+
e^{\lambda|x(0)|}\zeta(|x(0)|),\quad x\in \C_{B,N_3},
\end{equation*}
where the function $\zeta(z)$ is independent of $B$ and $N_3$, and tends to $0$ as $z\to\infty$. This implies
the statement of the lemma.
\end{proof}

Now we are able to establish a crucial Lyapunov inequality. Recall the definition of $\C_{B,N}$ in \eqref{CBN}.

\begin{Lemma}\label{L:4.8}
Suppose that Assumptions \textbf{\ref{A:1}} and \textbf{\ref{A:2}} hold. Then there exist $\lambda>0$, $\gamma>0$,
$c_1\in(0,1)$, $c_2>0$ such that the function $V$ defined in \eqref{vau} satisfies the following inequality:
\begin{equation}\label{statement4.8}
\E_x V(X_1)\le (1-c_1) V(x)+c_2,\quad x\in\C.
\end{equation}
\end{Lemma}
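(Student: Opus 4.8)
The plan is to combine the two regimes already isolated in Lemmas~\ref{three} and \ref{six}, gluing them together with a careful choice of the parameters $\lambda$ and $\gamma$ in the definition \eqref{vau} of $V$. First, fix the constants $\nu>0$ and $\rho\in(0,1)$ produced by Lemma~\ref{six}; these are the parameters for which the exponential weight in the ``small diameter'' region contracts, so I would set $\lambda:=\nu$ once and for all. Lemma~\ref{six} then gives, for every $B>0$, some $N=N(B)$ such that $\E_x e^{\lambda|X(1)|}\le e^{\lambda|x(0)|}(1-\rho)$ on $\C_{B,N}$. On this set $V(x)=e^{\lambda|x(0)|}$ exactly (provided $\gamma\ge B$, so that $D(x)\le B|x(0)|^\beta\le\gamma|x(0)|^\beta$ and the positive part vanishes), and moreover $V(X_1)\le e^{\lambda|X(1)|+D(X_1)}$ always, so I would not get \eqref{statement4.8} for free — I must absorb the extra $D(X_1)$ factor. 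The point is that $D(X_1)$ has, by \eqref{radius}, only exponential moments whose exponent grows like $|x(0)|^\beta$, which is dominated by the gain $e^{-\rho\cdot}$ in $e^{\lambda|x(0)|}$ only after controlling things more delicately; the clean way is to run the argument of Lemma~\ref{six} with $\phi$ replaced by the full $V$, i.e. re-examine whether the Gronwall step there already yields $\E_x V(X_1)\le(1-c_1)V(x)$ on $\C_{B,N}$. I expect that on $\C_{B,N}$ the term $(D(x)-\gamma|x(0)|^\beta)_+$ is zero, so $V(X_1)/V(x)\le e^{\lambda(|X(1)|-|x(0)|)+D(X_1)}$ and the expectation can be bounded by splitting on $\{D(X_1)\le\gamma|x(0)|^\beta/2\}$ and its complement, using \eqref{secondeq} on the complement (where the probability is $\le C_1\exp(C_1|x(0)|^{2\beta}+\dots-C_2(\gamma|x(0)|^\beta/2)^2)$, super-exponentially small) and Lemma~\ref{six} together with a Cauchy--Schwarz or Hölder bound on the main event.

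Second, I would handle the region where $D(x)$ is large relative to $|x(0)|$. Fix $\eps:=1/2$ in Lemma~\ref{three}; this produces $L>0$ and $B>0$ with $\E_x V(X_1)\le\frac12 V(x)$ whenever $D(x)\ge L$ and $D(x)\ge B|x(0)|^\beta$. I would now \emph{increase} $B$ if necessary so that it coincides with (or exceeds) the $B$ used to invoke Lemma~\ref{six} — note Lemma~\ref{three} holds for \emph{any} $\gamma>0$ and any $B$ above a threshold, and Lemma~\ref{six}'s conclusion on $\C_{B,N}$ is monotone in the sense that a larger $B$ only enlarges the set, at the cost of a larger $N$ — and choose $\gamma:=B$. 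With these choices the three regions
\[
\mathcal R_1:=\{D(x)\ge L,\ D(x)\ge B|x(0)|^\beta\},\qquad
\mathcal R_2:=\C_{B,N},\qquad
\mathcal R_3:=\C\setminus(\mathcal R_1\cup\mathcal R_2),
\]
cover $\C$. On $\mathcal R_1$ use Lemma~\ref{three}; on $\mathcal R_2$ use the strengthened Lemma~\ref{six} argument from the previous paragraph; and $\mathcal R_3$ is a bounded set, namely it is contained in $\{|x(0)|\le N\}\cap(\{D(x)\le L\}\cup\{D(x)\le B|x(0)|^\beta\})\subseteq\{|x(0)|\le N,\ D(x)\le L\vee BN^\beta\}$, on which $V$ is bounded and, by \eqref{lemmasup}, $\sup_{x\in\mathcal R_3}\E_x V(X_1)=:c_2<\infty$. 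Putting $c_1:=1/2$ (or the minimum of the contraction factors $1/2$ and $c_1$ from the two unbounded regions), on $\mathcal R_1\cup\mathcal R_2$ we get $\E_x V(X_1)\le(1-c_1)V(x)\le(1-c_1)V(x)+c_2$, and on $\mathcal R_3$ we get $\E_x V(X_1)\le c_2\le(1-c_1)V(x)+c_2$ since $V\ge1$. This yields \eqref{statement4.8} on all of $\C$.

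The main obstacle is the $\mathcal R_2$ estimate: Lemma~\ref{six} as stated only controls $\E_x e^{\lambda|X(1)|}$, not $\E_x V(X_1)$, and the discrepancy is exactly the factor $e^{D(X_1)}$, which is not bounded and whose exponential moments involve $|x(0)|^\beta$. The resolution I would push through is the super-exponential tail \eqref{secondeq}: on $\C_{B,N}$ the event $\{D(X_1)>\gamma|x(0)|^\beta/2\}$ has probability at most $C_1\exp(C_1|x(0)|^{2\beta}-C_2'\gamma^2|x(0)|^{2\beta})$, which for $|x(0)|\ge N$ large (after possibly enlarging $N$ and noting $C_2'\gamma^2>C_1$ once $\gamma$ is large — another reason to take $\gamma=B$ large) is so small that even multiplied by the crude bound $V(X_1)\le e^{(\lambda+1)|x(0)|+(\lambda+1)D(X_1)}$ and its moment from \eqref{radius}, via Cauchy--Schwarz, it contributes a term of order $e^{\lambda|x(0)|}\zeta(|x(0)|)$ with $\zeta\to0$; while on the complementary event $V(X_1)=e^{\lambda|X(1)|}\cdot e^{(D(X_1)-\gamma|X(1)|^\beta)_+}$ and since $D(X_1)\le\gamma|x(0)|^\beta/2$ and $|X(1)|\ge|x(0)|-D(X_1)$ one checks, for $|x(0)|$ large, that $D(X_1)\le\gamma|X(1)|^\beta$, hence the positive part again vanishes and $V(X_1)=e^{\lambda|X(1)|}$ there, so Lemma~\ref{six} applies directly to this truncated expectation. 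Assembling, $\E_x V(X_1)\le e^{\lambda|x(0)|}(1-\rho)+e^{\lambda|x(0)|}\zeta(|x(0)|)\le(1-\rho/2)e^{\lambda|x(0)|}=(1-\rho/2)V(x)$ on $\C_{B,N}$ for $N$ large, which is the required bound with $c_1=\rho/2$. A bit of bookkeeping is needed to make one choice of $(\lambda,\gamma,B,N,L)$ serve all three regions simultaneously, but since $\lambda=\nu$ is forced only by Lemma~\ref{six}, $\gamma=B$ is a free large parameter compatible with both Lemmas~\ref{three} and \ref{six}, and $N,L$ are threshold constants that we are always free to enlarge, there is no circularity.
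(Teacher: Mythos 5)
Your decomposition into the three regions (large diameter via Lemma~\ref{three}, the set $\C_{B,N}$ via Lemma~\ref{six}, and a bounded remainder via \eqref{lemmasup}) is exactly the paper's proof, and your argument on $\C_{B,N}$ is correct; the only genuine difference is technical. The paper controls the extra factor $e^{(D(X_1)-\gamma|X(1)|^\beta)_+}$ by Cauchy--Schwarz (hence it sets $\lambda=\nu/2$) and bounds the second factor by $1+e^{-2\gamma|x(0)|^\beta}\E_x e^{4D(X_1)}$ using \eqref{radius}, whereas you truncate on the event $\{D(X_1)\le\gamma|x(0)|^\beta/2\}$ --- where the positive part of $V(X_1)$ vanishes and Lemma~\ref{six} applies directly with $\lambda=\nu$ --- and kill the complement with the Gaussian tail \eqref{secondeq}; both routes work and deliver the same $(1-\rho/2)V(x)$ bound. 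One wrinkle: your prescription $\gamma:=B$ is literally circular, since the $B$ produced by Lemma~\ref{three} depends on $\gamma$ (in the paper's proof $B=3\gamma+3C(\lambda+1)>\gamma$), so $\gamma\ge B$ cannot be arranged. This is harmless, because the constraint $\gamma\ge B$ is only used for the cosmetic identity $V(x)=e^{\lambda|x(0)|}$ on $\C_{B,N}$; since $V(x)\ge e^{\lambda|x(0)|}$ always, your final estimate $\E_x V(X_1)\le(1-\rho/2)e^{\lambda|x(0)|}\le(1-\rho/2)V(x)$ survives if you instead fix $\gamma$ first (large enough for the tail estimate, a condition independent of $B$), then take $B$ to be the maximum of the two lemmas' thresholds.
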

\begin{proof}
First we note that for any $\lambda>0$, $\gamma>0$, $x\in\C$  we have
\begin{equation}\label{secondfactor}
\E_xV(X_1)\le \big(\E_x e^{2\lambda|X(1)|}\big)^{1/2}  \big(\E_x e^{2 (D(X_1)-\gamma|X(1)|^\beta)_+  }\big)^{1/2}.
\end{equation}
We take $\nu>0$ and $\rho \in (0,1)$ as in Lemma~\ref{six} and put $\lambda:=\nu /2$.
Then, thanks to Lemma~\ref{six}, for any $B>0$ there exists $N=N(B)>0$ such that for any $x\in\C_{B,N}$  we have
\begin{equation}\label{fterm}
(\E_x e^{2\lambda|X(1)|}\big)^{1/2}\le e^{\lambda|x(0)|}(1-\rho)^{1/2}\le V(x)(1-\rho/2).
\end{equation}
Thus, it remains to show that on $\C_{B,N}$ the second factor in the right--hand side of \eqref{secondfactor} is  smaller than
$(1+ \rho/2)$. Without loss of generality we assume that $N(B)$ is large enough so that $B N^\beta \le N$ (otherwise we can take larger $N(B)$). Using the inequality $|X(1)|\ge |x(0)|-D(X_1)$, we deduce for any $x\in\C_{B,N}$
\begin{align*}
\E_x e^{2(D(X_1)-\gamma|X(1)|^\beta)_+}\le& 1 +\E_x e^{2 (D(X_1)-\gamma|X(1)|^\beta)}\\
\le& 1+ e^{-2\gamma|x(0)|^\beta}\E_x e^{2 (D(X_1)+\gamma D(X_1)^\beta)}\\
\le& 1+  e^{-2(\gamma|x(0)|^\beta-C_\gamma)}\E_x e^{4 D(X_1)},
\end{align*}
where we have also used the fact that for some $C_\gamma>0$ we have $\gamma z^\beta\le z+C_\gamma$ for all $z\ge0$. We continue this estimate,
using Lemma~\ref{L:51}. Recall that on $\C_{B,N}$ we also have $D(x)\le |x(0)|$, thanks to our additional assumption on $N(B)$. Therefore we derive for any $x\in\C_{B,N}$
\begin{equation}\label{laststep}
\E_x e^{2 (D(X_1)-\gamma|X(1)|^\beta)_+}\le 1+  \exp\big\{-|x(0)|^\beta(2\gamma-
C) +C+2C_\gamma\big\}
\end{equation}
and the constant $C$ depends neither on $\gamma$ nor on $B$. Thus, taking $\gamma=\gamma(C)$ large enough, and combining \eqref{secondfactor},
\eqref{fterm} and \eqref{laststep}, we see that for any $B>0$ there exists a constant $N_1(B)$ such that on $\C_{B,N_1}$ we have
\begin{equation}\label{pochtidoshli}
\E_xV(X_1)\le V(x) (1-\rho/4),\quad x\in \C_{B,N_1}.
\end{equation}
Now with such $\lambda$ and $\gamma$ in hand we apply Lemma~\ref{three} with $\eps=1-\rho/4$. We get that there exist $B=B(\lambda,\gamma)>0$, $L=L(\lambda,\gamma)>0$ such that
\begin{equation*}
\E_xV(X_1)\le V(x) (1-\rho/4),\quad D(x)\ge (L\vee B|x(0)|^\beta).
\end{equation*}
Together with \eqref{pochtidoshli} this bound implies that for some $N_2=N_2(\lambda,\gamma)$, $L_1=L_1(\lambda,\gamma)$ we have
\begin{equation*}
\E_xV(X_1)\le V(x) (1-\rho/4),\quad |x(0)|\ge N_2\text{ or }D(x)\ge L_1.
\end{equation*}
Finally, if $|x(0)|\le N_2$ and $D(x)\le L_1$, then by \eqref{lemmasup}
\begin{equation*}
\sup_{\substack{|x(0)|\le N_2\\ D(x)\le L_1}}\E_x V(X_1)<\infty.
\end{equation*}
This completes the proof of the lemma.
\end{proof}

Based on the previous lemmas, we can now complete the proof of Theorem \ref{T:1}.

\begin{proof}[Proof of Theorem~\ref{T:1}]
It follows from Lemma~\ref{L:4.8} that condition \eqref{Lyap} holds with the function $\Psi(z):=c_1 z$, $z\in\R_+$, where the constant $c_1$ is defined in \eqref{statement4.8}. Hence  Theorem~\ref{T:1} follows immediately from Propositions~\ref{P:41} and \ref{P:42}.
\end{proof}

\subsection{Proof of Theorem~\ref{T:2}}\label{S:43}

Now we move on to the subgeometric case. We fix till the end of this section the constants $\alpha$, $\sigma$, $M$  and the function $\kappa$ from Assumption \textbf{\ref{A:3}}. As above without loss of generality, we assume that
$\kappa$ is an increasing concave function.

We work with a Lyapunov function
\begin{equation}\label{vau2}
V(x):=\exp\bigl(\lambda_1|x(0)|^\alpha+\lambda_2\bigl(D(x)^2-\psi(|x(0)|)\bigr)_+\bigr),
\end{equation}
where $\lambda_1>0$, $\lambda_2>0$, and $\psi\colon\R_+\to\R_+$ is an increasing continuous concave function. We will specify $\lambda_1$, $\lambda_2$, and the function $\psi$ later.
As before, we consider two cases: the ``small'' diameter case (where we gain from the decrease of the first factor in the Lyapunov function) and the ``large'' diameter case (where we gain from the decrease of the second factor in the Lyapunov function).

We start with the second case. Recall the definition of $\lambda_0$ from Lemma~\ref{L:51}.
\begin{Lemma}\label{L:53}
Suppose that Assumptions \textbf{\ref{A:1}} and \textbf{\ref{A:3}} hold. Let $V$ be the Lyapunov function defined in \eqref{vau2}. For any $\lambda_1>0$, $\lambda_2\in(0,\lambda_0/2]$ there exists $R=R(\lambda_1,\lambda_2)$ such that for any $x\in\C$
with $D(x)> (R+\psi(|x(0)|))^{1/2}$ we have
\begin{equation}\label{pervoeusl}
\E_x V(X_1)\le V(x)/2.
\end{equation}

Further, for any $\lambda_1>0$, $\lambda_2\in(0,\lambda_0/2]$, $N>0$ we have
\begin{equation}\label{obshayagran}
\sup_{x\in\C,\, |x(0)|\le N}\E_x V(X_1)<\infty
\end{equation}

\end{Lemma}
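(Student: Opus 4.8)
The plan is to bound $\E_x V(X_1)$ via Cauchy--Schwarz, splitting into the contribution of the $|x(0)|^\alpha$-factor and the $\bigl(D(X_1)^2-\psi(|X(1)|)\bigr)_+$-factor, exactly in the spirit of the proof of Lemma~\ref{L:4.8}, but this time exploiting that when $D(x)$ is large the second factor decreases sharply. First I would write
\begin{equation*}
\E_x V(X_1)\le \bigl(\E_x e^{2\lambda_1|X(1)|^\alpha}\bigr)^{1/2}\bigl(\E_x e^{2\lambda_2(D(X_1)^2-\psi(|X(1)|))_+}\bigr)^{1/2}.
\end{equation*}
The first factor is harmless: since $f$ and $g$ are globally bounded under Assumption~\textbf{\ref{A:3}}, one has $|X(1)|\le|x(0)|+D(X_1)$ and $\alpha<1$, so using $e^{2\lambda_1|X(1)|^\alpha}\le e^{2\lambda_1|x(0)|^\alpha}e^{2\lambda_1 D(X_1)^\alpha}$ together with \eqref{firsteq} (which controls $\E_x e^{\lambda_0 D(X_1)^2}$, hence all polynomial-in-$D(X_1)$ and sub-Gaussian exponential moments, since $\beta=0$ here) gives $\bigl(\E_x e^{2\lambda_1|X(1)|^\alpha}\bigr)^{1/2}\le e^{\lambda_1|x(0)|^\alpha}\cdot C$ for a constant $C$ depending only on $\lambda_1$.

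The heart of the matter is the second factor. On the event where $D(X_1)$ is large one should drop the $(\cdot)_+$ and estimate
\begin{equation*}
\E_x e^{2\lambda_2(D(X_1)^2-\psi(|X(1)|))_+}\le 1+\E_x e^{2\lambda_2 D(X_1)^2-2\lambda_2\psi(|X(1)|)}.
\end{equation*}
Using $|X(1)|\ge|x(0)|-D(X_1)$, concavity of $\psi$ and $\psi(s)\le C_\psi(s+1)$ one bounds $-\psi(|X(1)|)\le -\psi(|x(0)|)+\psi(D(X_1))\le-\psi(|x(0)|)+C_\psi D(X_1)+C_\psi$; since $2\lambda_2\le\lambda_0$, the term $e^{2\lambda_2 D(X_1)^2+2\lambda_2 C_\psi D(X_1)}$ still has finite expectation bounded by a constant via \eqref{firsteq} (absorbing the linear term into the square). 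Hence
\begin{equation*}
\E_x e^{2\lambda_2(D(X_1)^2-\psi(|X(1)|))_+}\le 1+C e^{-2\lambda_2\psi(|x(0)|)}.
\end{equation*}
Now on the region $D(x)>(R+\psi(|x(0)|))^{1/2}$ we have $V(x)=e^{\lambda_1|x(0)|^\alpha+\lambda_2(D(x)^2-\psi(|x(0)|))}\ge e^{\lambda_1|x(0)|^\alpha}e^{\lambda_2 R}$, i.e.\ $V(x)$ is at least $e^{\lambda_2 R}$ times the first-factor bound $C e^{\lambda_1|x(0)|^\alpha}$. Choosing $R=R(\lambda_1,\lambda_2)$ large enough that $C^2 e^{-\lambda_2 R}(1+C)^{1/2}\le 1/2$ — more precisely, combining the two factor bounds, $\E_x V(X_1)\le C e^{\lambda_1|x(0)|^\alpha}(1+Ce^{-2\lambda_2\psi(|x(0)|)})^{1/2}\le 2C e^{\lambda_1|x(0)|^\alpha}\le 2Ce^{-\lambda_2 R}V(x)\le V(x)/2$ — yields \eqref{pervoeusl}. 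The bound \eqref{obshayagran} is then immediate from the same Cauchy--Schwarz estimate: on $\{|x(0)|\le N\}$ the first factor is at most $Ce^{\lambda_1 N^\alpha}$ and the second is at most $1+C$, both uniformly bounded.

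I expect the main obstacle to be bookkeeping the interplay between the $D(X_1)^2$ in the exponent and the growth allowed by \eqref{firsteq}: one must make sure that after pulling out $e^{-2\lambda_2\psi(|x(0)|)}$ the residual exponential moment $\E_x e^{2\lambda_2 D(X_1)^2+(\text{linear in }D(X_1))}$ is genuinely bounded by an $x$-independent constant, which forces the constraint $\lambda_2\le\lambda_0/2$ and a small Young-type inequality $2\lambda_2 C_\psi D(X_1)\le \tfrac{\lambda_0}{2}D(X_1)^2+C$ to absorb the linear term — note $\beta=0$ makes the $|x(0)|^\beta,D(x)^\beta$ terms in \eqref{firsteq} into harmless constants, which is what makes this clean. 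Everything else is a routine repetition of the Cauchy--Schwarz/concavity manipulations already used in Lemma~\ref{L:4.8}.
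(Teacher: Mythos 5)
Your argument is structurally the paper's own proof: Cauchy--Schwarz to separate the $e^{2\lambda_1|X(1)|^\alpha}$ and $e^{2\lambda_2(D(X_1)^2-\psi(|X(1)|))_+}$ factors, bound each by a constant times the appropriate power of $e^{\lambda_1|x(0)|^\alpha}$ using Lemma~\ref{L:51} with $\beta=0$, and then win by the factor $e^{-\lambda_2(D(x)^2-\psi(|x(0)|))_+}\le e^{-\lambda_2 R}$ hidden in $V(x)$ on the region $D(x)^2>R+\psi(|x(0)|)$. One caveat: your treatment of the second factor, where you pull out $e^{-2\lambda_2\psi(|x(0)|)}$ via $-\psi(|X(1)|)\le-\psi(|x(0)|)+C_\psi D(X_1)+C_\psi$ and then absorb the linear term by a Young-type inequality, does not close at the endpoint $\lambda_2=\lambda_0/2$: after absorbing $2\lambda_2C_\psi D(X_1)\le\eps D(X_1)^2+C_\eps$ you would need $\E_x e^{(\lambda_0+\eps)D(X_1)^2}<\infty$, which \eqref{firsteq} does not provide (it is stated only for the specific constant $\lambda_0$). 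Fortunately this refinement is never actually used: your own final chain discards the gain by bounding $(1+Ce^{-2\lambda_2\psi(|x(0)|)})^{1/2}$ by a constant, so you should simply estimate $(D(X_1)^2-\psi(|X(1)|))_+\le D(X_1)^2$ and $\E_x e^{2\lambda_2 D(X_1)^2}\le\E_x e^{\lambda_0 D(X_1)^2}\le C$, which is exactly what the paper does. (The $e^{-2\lambda_2\psi(|x(0)|)}$ gain is needed only later, in Lemma~\ref{L:55}, and there the paper takes $\lambda_2=\lambda_0/4$ precisely so that an extra Cauchy--Schwarz is available to handle it.) With that one step trimmed, both \eqref{pervoeusl} and \eqref{obshayagran} follow as you describe.
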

\begin{proof}
For any $x\in\C$ we derive
\begin{align}\label{nachalo49}
\E_x V(X_1)=&\E_x \exp\bigl(\lambda_1|X(1)|^\alpha+\lambda_2\bigl(D(X_1)^2-\psi(|X(1)|)\bigr)_+\bigr)\nn\\
\le& e^{\lambda_1|x(0)|^\alpha}\bigl(\E_x e^{2\lambda_1 (D(X_1)+1)}\bigr)^{1/2}
\bigl(\E_x e^{2\lambda_2 D(X_1)^2}\bigr)^{1/2}\nn\\
\le& C e^{C (\lambda_1+\lambda_1^2)} e^{\lambda_1|x(0)|^\alpha}
\end{align}
where in the second inequality we applied Lemma~\ref{L:51} and used the fact that $2\lambda_2\le \lambda_0$. This immediately implies \eqref{obshayagran}. To establish \eqref{pervoeusl} we deduce from \eqref{nachalo49} that
\begin{equation*}
\E_x V(X_1)\le V(x)e^{-\lambda_2(D(x)^2-\psi(|x(0)|))_+}C e^{C (\lambda_1+\lambda_1^2)},\quad x\in\C.
\end{equation*}
Now we find large $R=R(\lambda_1,\lambda_2)$ such that
\begin{equation*}
e^{-\lambda_2R}C e^{C (\lambda_1+\lambda_1^2)}<1/2.
\end{equation*}
By above, if $D(x)> (R+\psi(|x(0)|))^{1/2}$, then $\E_x V(X_1)\le V(x)/2$.
\end{proof}

Now we move on to the ``small'' diameter case. Recall the definition of the constant $C_\kappa$
from the beginning of Section~\ref{S:42}.

\begin{Lemma}\label{L:54} Suppose that the assumptions of Lemma~\ref{L:53} hold. Then there exist $\nu>0$, $N_0>0$, $\rho_1>0$, $\rho_2>0$  such that for any $x\in\C$ with $|x(0)|\ge N_0$,
$D(x)\le \kappa(|x(0)|)/(4C_\kappa)$  we have
\begin{equation}\label{second}
\E_x e^{\nu|X(1)|^\alpha}\le e^{\nu|x(0)|^\alpha}(1-\rho_1|x(0)|^{2\alpha-2})+\rho_2.
\end{equation}

\end{Lemma}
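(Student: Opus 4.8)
The plan is to follow closely the proof of Lemma~\ref{six}, replacing the test function $e^{\lambda|y|}$ by $\phi\colon\R^d\to\R_+$, a smooth function with $\phi(y)=e^{\nu|y|^\alpha}$ for $|y|\ge M$, where $\nu>0$ is small and will be fixed during the proof. Exactly as in Lemma~\ref{six}, estimate \eqref{radius} shows that $\E_x\phi(X(u))<\infty$ and $\E_x\phi(X(u))^2<\infty$ for $u\in[0,1]$, so the stochastic integral in It\^o's formula for $\phi(X(\cdot))$ is a true martingale and, for $0\le s\le u\le1$,
\begin{equation*}
\E_x\phi(X(u))=\E_x\phi(X(s))+\int_s^u\E_x\bigl[\langle\nabla\phi(X(t)),f(X_t)\rangle+\tfrac12\Tr\bigl(g(X_t)g(X_t)^{\top}\nabla^2\phi(X(t))\bigr)\bigr]\,dt .
\end{equation*}
Denote the integrand inside the expectation by $\mathcal L\phi(X(t))$.

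The first step is the generator estimate. For $|y|\ge M$ one has $\nabla\phi(y)=\nu\alpha|y|^{\alpha-2}y\,\phi(y)$ and $\norm{\nabla^2\phi(y)}\le C\bigl(\nu|y|^{\alpha-2}+\nu^2|y|^{2\alpha-2}\bigr)\phi(y)$. Since $g$ is bounded and \eqref{cond} applies on the ``good'' event $\{D(X_t)\le\kappa(|X(t)|)\}$ --- which, once $|x(0)|\ge M+\kappa(M)$, forces $|X(t)|\ge M$ --- the drift part of $\mathcal L\phi$ is at most $-\sigma\nu\alpha|X(t)|^{2\alpha-2}\phi(X(t))$ there, whereas the second--order part is of order $(\nu|X(t)|^{\alpha-2}+\nu^2|X(t)|^{2\alpha-2})\phi(X(t))$. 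As $\alpha<1$, the power $|y|^{\alpha-2}$ is of smaller order than $|y|^{2\alpha-2}$, so choosing $\nu$ small and then $M_1\ge M$ large yields $\mathcal L\phi(y)\le-\tfrac12\sigma\nu\alpha\,|y|^{2\alpha-2}\phi(y)$ whenever $|y|\ge M_1$ and $D(X_t)\le\kappa(|y|)$; on the remaining part of the state space I would only use $|\mathcal L\phi(y)|\le C(1+\phi(y))$.

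The crux is controlling the error terms, which now have to be of strictly smaller order than $\phi(x(0))|x(0)|^{2\alpha-2}$. The assumption $D(x)\le\kappa(|x(0)|)/(4C_\kappa)$ together with the concavity of $\kappa$ is precisely what is needed to invoke Lemma~\ref{L:52} with $\psi=\kappa$ (and $\beta=0$, as $f$ is bounded): $\P_x(D(X_t)>\kappa(|X(t)|))\le C_1e^{-C_2\kappa(|x(0)|)^2}$ for $t\in[0,1]$. Combined with Cauchy--Schwarz and the estimate $\E_x\phi(X(t))^2\le Ce^{2\nu|x(0)|^\alpha}$ (from \eqref{firsteq} and $|X(t)|\le|x(0)|+D(X_1)$), this gives $\E_x[\phi(X(t))\I_{\{D(X_t)>\kappa(|X(t)|)\}}]\le C\phi(x(0))e^{-C_2\kappa(|x(0)|)^2/2}$. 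The events $\{|X(t)|<M_1\}$ and $\{|X(t)|>2|x(0)|\}$ lie in $\{D(X_1)\ge|x(0)|/2\}$ and so have probability $\le Ce^{-c|x(0)|^2}$ by \eqref{secondeq}, while on $\{|X(t)|\le2|x(0)|\}$ one replaces $|X(t)|^{2\alpha-2}$ by $2^{2\alpha-2}|x(0)|^{2\alpha-2}$. Putting $m(u):=\E_x\phi(X(u))$ and $\theta:=\tfrac12\sigma\nu\alpha\,2^{2\alpha-2}|x(0)|^{2\alpha-2}$, these bounds give, for $|x(0)|\ge N_0$ (so that also $\theta\le1$),
\begin{equation*}
m(u)\le m(s)-\int_s^u\bigl(\theta\,m(t)-C\phi(x(0))e^{-c\kappa(|x(0)|)^2}\bigr)\,dt,\qquad 0\le s\le u\le1 .
\end{equation*}

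Finally, Lemma~\ref{L:Gron} yields $m(1)\le e^{-\theta}\phi(x(0))+(C/\theta)\phi(x(0))e^{-c\kappa(|x(0)|)^2}=e^{-\theta}\phi(x(0))+C\phi(x(0))e^{-c\kappa(|x(0)|)^2}|x(0)|^{2-2\alpha}$. This is exactly the point where the hypothesis $\kappa(z)/\sqrt{\log z}\to\infty$ is essential: it forces $e^{-c\kappa(z)^2}$ to decay faster than every power of $z$, so for $N_0$ large enough the last summand is at most $\tfrac12\bigl(1-e^{-\theta}\bigr)\phi(x(0))$, and together with $e^{-\theta}\le1-\theta/2$ one gets $m(1)\le\tfrac12(1+e^{-\theta})\phi(x(0))\le(1-\theta/4)e^{\nu|x(0)|^\alpha}$. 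Passing from $\phi$ back to $e^{\nu|\cdot|^\alpha}$ via $e^{\nu|y|^\alpha}\le\phi(y)+e^{\nu M^\alpha}$ proves \eqref{second} with this $\nu$, $\rho_1:=\tfrac18\sigma\nu\alpha\,2^{2\alpha-2}$ and $\rho_2:=e^{\nu M^\alpha}$. I expect the main difficulty to be exactly this last balancing act: unlike in the exponential Lemma~\ref{six}, the contraction rate $\theta$ degenerates like $|x(0)|^{2\alpha-2}\to0$, so there is no slack for a constant multiplicative defect and each error term --- most notably the contribution of the atypically large diameters $\{D(X_t)>\kappa(|X(t)|)\}$ --- must be estimated with this additional polynomial precision, which is what the logarithmic lower bound on $\kappa$ makes possible.
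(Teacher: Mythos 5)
Your proof is correct and follows the paper's overall scheme (It\^o's formula applied to a smooth $\phi$ with $\phi(y)=e^{\nu|y|^\alpha}$ for $|y|\ge M$, the drift term split along the event $\{D(X_t)\le\kappa(|X(t)|)\}$, and the atypical-diameter contribution killed by Cauchy--Schwarz together with Lemma~\ref{L:52} and the super-polynomial decay of $e^{-c\kappa(z)^2}$ guaranteed by $\kappa(z)/\sqrt{\log z}\to\infty$). The one genuinely different step is how the factor $|x(0)|^{2\alpha-2}$ is extracted at the end. The paper does \emph{not} use the Gronwall lemma here: it integrates It\^o's formula once over $[0,1]$ and bounds the resulting integral from below pathwise, writing $|X(s)|^\alpha\ge|x(0)|^\alpha-D(X_1)-1$ and $(1+|X(s)|)^{2\alpha-2}\ge|x(0)|^{2\alpha-2}(2+D(X_1))^{2\alpha-2}$, and then showing $\E_x\bigl[e^{-D(X_1)-1}(2+D(X_1))^{2\alpha-2}\bigr]\ge c>0$ via Jensen's inequality and \eqref{radius}. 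You instead truncate to $\{|X(t)|\le 2|x(0)|\}$ (controlling the complement by \eqref{secondeq}), obtain a linear differential inequality with the state-dependent rate $\theta\sim|x(0)|^{2\alpha-2}$, invoke Lemma~\ref{L:Gron}, and expand $e^{-\theta}\le 1-\theta/2$ for $\theta\le1$; this is closer in spirit to the exponential case of Lemma~\ref{six}. Both routes are valid and of comparable length; yours requires the extra (correct) observations that $\theta\le1$ for $|x(0)|$ large and that the residual $r/\theta$ from Gronwall carries a factor $|x(0)|^{2-2\alpha}$, which is exactly where the logarithmic lower bound on $\kappa$ is consumed. One bookkeeping remark: your error term $r$ should also include the $O(1)$ contributions from $\{|X(t)|<M_1\}$ and from the $\widetilde M$-truncation of the second-order term; after dividing by $\theta$ these give $O(|x(0)|^{2-2\alpha})$, which is still $o\bigl(|x(0)|^{2\alpha-2}e^{\nu|x(0)|^\alpha}\bigr)$, so the conclusion is unaffected.
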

\begin{proof}
Recall the definition of $M$ from condition \eqref{cond}. Let $\lambda\in(0,1)$. Similar to the proof of Lemma~\ref{six},
we introduce a smooth function $\phi:\R^d\to\R_+$ such that for $|y|\ge M$ we have $\phi(y)=\exp(\lambda |y|^\alpha)$
and for $|y|\le M$ we have $\phi(y)\in[0,e^{\lambda M}]$. Arguing as in the proof of
Lemma~\ref{six}
and applying Ito's formula, we have for any $x\in\C$
\begin{align*}
\E_x \phi (X(1))\le& \phi(x)+\alpha\lambda\int_0^1 \E_x \I (|X(s)|\ge M)e^{\lambda |X(s)|^{\alpha}}|X(s)|^{\alpha-2}\langle X(s),f(X_s)\rangle \, ds\\
&+\frac12 \lambda\alpha\int_0^1 \E_x  \I (|X(s)|\ge M) e^{\lambda |X(s)|^\alpha}|X(s)|^{\alpha-2}(C_1 \lambda\alpha|X(s)|^{\alpha}+C_1)ds+ C_1\\
\le& \phi(x) + \alpha\lambda (I_1+ I_2)+C_1.
\end{align*}
Suppose further that  $D(x)\le \kappa(|x(0)|)/(4C_\kappa)$. First we bound $I_1$.
Using assumption \eqref{cond}, we derive
\begin{align*}
I_1\le& -\sigma\int_0^1 \E_x \I (|X(s)|\ge M)e^{\lambda |X(s)|^{\alpha}} |X(s)|^{2 \alpha-2}\,ds\\
&+C_2\int_0^1 \E_x\I(D(X_s)>\kappa(|X(s)|), |X(s)|\ge M) e^{\lambda |X(s)|^{\alpha}}|X(s)|^{\alpha-1}\,ds + C_3.
\end{align*}
It follows from the Cauchy--Schwarz inequality that for any $s\in[0,1]$
\begin{align*}
\E_x\I&(D(X_s)>\kappa(|X(s)|), |X(s)|\ge M) e^{\lambda |X(s)|^{\alpha}}|X(s)|^{\alpha-1}\\
&\le M^{\alpha-1}\Bigl(\P_x(D(X_s)>\kappa(|X(s)|))\Bigr)^{1/2} \Bigl(\E_x \exp(2\lambda|X(s)|^{\alpha})\Bigr)^{1/2}\\
&\le C_4 M^{\alpha-1} e^{-C_5 \kappa(|x(0)|)^2}e^{\lambda |x(0)|^\alpha}e^{C_6(1+\lambda+\lambda^2)},
\end{align*}
where in the last inequality we use the bound $|X(s)|^\alpha\le |x(0)|^{\alpha} + D(X_1)+1$, and estimates \eqref{radius} and \eqref{psiest}.

To bound $I_2$ we take a large $\widetilde M>M$ to get
\begin{equation*}
I_2\le \int_0^1 \E_x  \I (X(s)\ge \widetilde M) e^{\lambda |X(s)|^\alpha}|X(s)|^{2\alpha-2}(C_1 \lambda\alpha+C_1\widetilde M^{-\alpha})ds+C_7e^{\widetilde M}.
\end{equation*}

Note that the constants $C_1$, $C_2$, ... $C_7$ above do not depend on $\lambda\in[0,1]$ or $\widetilde M$.

Combining all the previous estimates, we deduce
\begin{align}\label{predlast}
\E_x \phi (X(1))\le& \phi(x) -\theta \int_0^1 \E_x \I (|X(s)|\ge M)\phi(X(s)) |X(s)|^{2 \alpha-2}\,ds +C_8e^{\widetilde M}\nn\\
&+C_9e^{-C_5 \kappa(|x(0)|)^2}e^{\lambda |x(0)|^\alpha}.
\end{align}
where $\theta:=\alpha\lambda(\sigma- C_1\lambda \alpha-C_1{\widetilde M}^{-\alpha})$.

Recall that $C_1$ does not depend on $\widetilde M$ and $\lambda$. Thus, we take $\lambda$ to be small enough and $\widetilde M$ to be large enough so that
$\theta>0$.

Suppose additionally that $|x(0)|\ge 1$. We derive for  $s\in[0,1]$
\begin{align}\label{lastexspr}
\E_x \bigl[\I (|X(s)|&\ge M)\phi(X(s)) |X(s)|^{2 \alpha-2}\bigr]\nn\\
\ge&\E_x \bigl[\I (|X(s)|\ge M)\phi(X(s)) (1+|X(s)|)^{2 \alpha-2}\bigr]\nn\\
\ge& \E_x \bigl[e^{\lambda|X(s)|^\alpha} (1+|X(s)|)^{2 \alpha-2}\bigr]-C_{10}\nn\\
\ge&  e^{\lambda|x(0)|^\alpha}|x(0)|^{2\alpha-2}
\E_x \bigl[e^{-D(X_1)- 1} (1+|x(0)|^{-1}+D(X_1)|x(0)|^{-1})^{2 \alpha-2}\bigr]-C_{10}\nn\\
\ge& e^{\lambda|x(0)|^\alpha}|x(0)|^{2\alpha-2}
\E_x \bigl[e^{-D(X_1)- 1} (2+D(X_1))^{2 \alpha-2}\bigr]-C_{10}.
\end{align}
We continue the calculations using Jensen's inequality and estimate \eqref{radius} with $\beta=0$. We get
\begin{equation*}
\E_x \bigl[e^{-D(X_1)- 1} (2+D(X_1))^{2 \alpha-2}\bigr]\ge
\Bigl(\E_x \bigl[e^{D(X_1)+ 1} (2+D(X_1))^{2 -2\alpha}\bigr]\Bigr)^{-1}\ge C_{11}>0.
\end{equation*}
Combining this estimate  with \eqref{lastexspr} and \eqref{predlast}, we get
\begin{equation}\label{nownow}
\E_x e^{\lambda|X(1)|^\alpha}\le e^{\lambda|x(0)|^\alpha} -C_{12} e^{\lambda|x(0)|^\alpha}|x(0)|^{2\alpha-2}+ C_9e^{\lambda |x(0)|^\alpha}e^{-C_5 \kappa(|x(0)|)^2}+C_{13}.
\end{equation}
Recall our assumption on growth of $\kappa$: $\kappa^2(z)/\log(z)\to\infty$ as $z\to\infty$. Thus,
if $N_0$ is large enough, and $|x(0)|>N_0$, then  inequality \eqref{second} with $\nu=\lambda$
follows directly from \eqref{nownow}.
\end{proof}

Now we are ready to give the proof of Theorem~\ref{T:2}. Recall the definitions of $N_0$, $\nu$ from Lemma~\ref{L:54}. The next lemma is crucial.

\begin{Lemma}\label{L:55}
Suppose that the assumptions of Lemma~\ref{L:53} hold. Then there exist $\lambda_1>0$, $\lambda_2>0$, $c_1>0$, $c_2>0$, $N>0$
and a function $\psi$ such that
\begin{equation}\label{estv2}
\E_x V(X_1)\le V(x)(1-c_1|x(0)|^{2\alpha-2})+c_2, \quad x\in\C, |x(0)|\ge N.
\end{equation}
\end{Lemma}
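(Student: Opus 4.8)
The plan is to combine Lemma~\ref{L:53} (the ``large diameter'' estimate) and Lemma~\ref{L:54} (the ``small diameter'' estimate) to get the desired Lyapunov inequality on $\{|x(0)|\ge N\}$, choosing the free parameters $\lambda_1,\lambda_2,\psi$ appropriately. First I would fix $\lambda_1:=\nu$, with $\nu$ coming from Lemma~\ref{L:54}, and keep $\lambda_2\in(0,\lambda_0/2]$ to be pinned down at the end, and I would choose $\psi(z):=C_5\kappa(z)^2\wedge (\text{something linear})$ — more precisely a concave increasing majorant of $z\mapsto \tfrac{C_5}{2}\kappa(z)^2$ that still satisfies $\psi(z)\le C_\psi(z+1)$, so that $\psi$ is admissible in Lemma~\ref{L:52}/Lemma~\ref{L:54}, and simultaneously $\psi(z)\gg\log z$ so the penalty term $e^{-C_5\kappa(|x(0)|)^2}e^{\lambda_1|x(0)|^\alpha}$ appearing in Lemma~\ref{L:54} is absorbed. (Since $\kappa^2(z)/\log z\to\infty$, one can arrange $\psi(z)/\log z\to\infty$ as well, making $e^{\lambda_1 z^\alpha - \psi(z)}\to 0$ superpolynomially fast, in particular faster than $z^{2\alpha-2}$.)

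The argument then splits into two regions. In the region $D(x)\le \kappa(|x(0)|)/(4C_\kappa)$ with $|x(0)|\ge N_0$: here the second factor of $V$ is either equal to $1$ (if $D(x)^2\le\psi(|x(0)|)$) or small, so I would estimate
\[
\E_x V(X_1)\le \bigl(\E_x e^{2\lambda_1|X(1)|^\alpha}\bigr)^{1/2}\bigl(\E_x e^{2\lambda_2(D(X_1)^2-\psi(|X(1)|))_+}\bigr)^{1/2},
\]
control the first factor via Lemma~\ref{L:54} (after replacing $\lambda_1$ by $2\lambda_1$, which is harmless since the constant $\nu$ there can be taken as small as we like — one should state this at the start), and control the second factor by Lemma~\ref{L:52} applied with $\psi$: the event $D(X_1)^2>\psi(|X(1)|)$ has probability at most $C_1 e^{C_1|x(0)|^{2\beta}+C_1D(x)^{2\beta}-C_2\psi(|x(0)|)^2}$ with $\beta=0$ here, so this factor is $1+$ (a super-exponentially small error), hence $\le 1+\rho_1|x(0)|^{2\alpha-2}/4$ for $|x(0)|$ large. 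Multiplying, the dominant behaviour is $e^{\lambda_1|x(0)|^\alpha}(1-\rho_1|x(0)|^{2\alpha-2}+\dots)$, which gives \eqref{estv2} in this region with $c_1=\rho_1/2$, after noting $e^{\lambda_1|x(0)|^\alpha}\le V(x)$.

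In the complementary region $D(x)> \kappa(|x(0)|)/(4C_\kappa)$: here I would further split. If $D(x)>(R+\psi(|x(0)|))^{1/2}$ with $R$ from Lemma~\ref{L:53}, that lemma directly gives $\E_x V(X_1)\le V(x)/2$, which is certainly $\le V(x)(1-c_1|x(0)|^{2\alpha-2})$ for $|x(0)|$ large. The remaining sub-region is $\kappa(|x(0)|)/(4C_\kappa)<D(x)\le (R+\psi(|x(0)|))^{1/2}$; since $\psi\asymp\kappa^2$ up to constants, this is a ``bounded ratio'' region where $D(x)$ and $\kappa(|x(0)|)$ are comparable, so in particular $D(x)^2-\psi(|x(0)|)$ is bounded above by a constant, meaning $V(x)\le e^{\lambda_1|x(0)|^\alpha}\cdot\text{const}$. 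On this sub-region one can run essentially the same computation as in Lemma~\ref{L:53}/\eqref{nachalo49}: $\E_x V(X_1)\le Ce^{C(\lambda_1+\lambda_1^2)}e^{\lambda_1|x(0)|^\alpha}$, but crucially one can choose $\lambda_2$ small enough — or, if needed, enlarge the threshold separating this sub-region from the previous one by taking $R$ larger — so that the constant factor is compensated; alternatively, and more cleanly, one notes this sub-region is contained in $\{D(x)\le\kappa(|x(0)|)/c\}$ for a possibly different constant and reuses the first region's bound. Finally, taking $N:=N(\lambda_1,\lambda_2)$ larger than $N_0$, $R$, and all the implicit thresholds, and setting $c_1:=\min$ of the constants obtained in each piece and $c_2:=\sup_{|x(0)|\le N}\E_x V(X_1)$, which is finite by \eqref{obshayagran}, finishes the proof.

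The main obstacle I anticipate is the bookkeeping around the choice of $\psi$ and $\lambda_2$: one needs $\psi$ concave, increasing, sublinear-dominated ($\psi(z)\le C_\psi(z+1)$) so it is a legal input to Lemma~\ref{L:52}, yet growing faster than $\log z$ (so the $\kappa^2$-penalty beats the $z^{2\alpha-2}$ loss we are trying to extract) and \emph{proportional} to $\kappa^2$ up to constants (so that the intermediate region $\kappa\lesssim D\lesssim\sqrt\psi$ is genuinely a ``bounded ratio'' strip rather than something where $V$ can be large). Reconciling ``$\psi$ concave'' with ``$\psi\asymp\kappa^2$'' requires that $\kappa^2$ itself be essentially concave, which is \emph{not} automatic — so one should instead take $\psi$ to be a concave majorant of $\min(\tfrac{C_5}{2}\kappa(z)^2,\; z)$ (or of $\kappa(z)^2\wedge z$), check that this still tends to infinity faster than $\log z$ (true because $\kappa^2/\log z\to\infty$ forces $\kappa^2\wedge z$ to grow like $z$ eventually, whose concave majorant is itself), and then verify that in the strip $D\le\sqrt{R+\psi(|x(0)|)}$ the quantity $(D^2-\psi)_+$ is still $O(1)$ — here it is $(D^2-\psi)_+\le R$ by definition of the strip. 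With that observation the strip really does force $V(x)\le e^{\lambda_2 R}e^{\lambda_1|x(0)|^\alpha}$, and choosing $R$ (equivalently enlarging it in Lemma~\ref{L:53}) so that $e^{\lambda_2 R}$... no: rather, choosing $\lambda_2$ small so that $e^{\lambda_2R}$ combined with $Ce^{C(\lambda_1+\lambda_1^2)}$ from \eqref{nachalo49} is still $<2$, say, and then noting $2e^{\lambda_1|x(0)|^\alpha}$ — hmm, this is $\ge V(x)$ only if the second factor of $V$ is $1$. To avoid circularity I would, in the strip, simply bound $V(x)\ge e^{\lambda_1|x(0)|^\alpha}$ and $\E_x V(X_1)\le Ce^{C(\lambda_1+\lambda_1^2)}e^{\lambda_1 |x(0)|^\alpha}$ and then observe that the strip is contained in $\{D(x)\le \kappa(|x(0)|)/\tilde c\}$ with $\tilde c$ possibly smaller than $4C_\kappa$, so by retracing the proof of Lemma~\ref{L:54} with $4C_\kappa$ replaced by $\tilde c$ (the proof only used that $D(x)$ is at most a constant multiple of $\kappa(|x(0)|)$, via Lemma~\ref{L:52}, which holds for \emph{any} $\psi(|x(0)|)/(4C_\psi)\ge D(x)$), one gets the same contraction $(1-\rho_1|x(0)|^{2\alpha-2})$ there too. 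This re-reading of Lemma~\ref{L:54}'s hypothesis as ``$D(x)\lesssim\kappa(|x(0)|)$'' rather than ``$D(x)\le\kappa(|x(0)|)/(4C_\kappa)$ exactly'' is the key point that makes the three regions overlap and cover all of $\{|x(0)|\ge N\}$.
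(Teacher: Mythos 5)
Your overall architecture is the same as the paper's: Cauchy--Schwarz to separate the two factors of $V$, $\lambda_2\le\lambda_0/2$ so that \eqref{firsteq} controls the $D(X_1)^2$ factor, Lemma~\ref{L:54} for the radial factor on the small-diameter region, Lemma~\ref{L:53} on the large-diameter region, and \eqref{obshayagran} for the remaining compact set. The genuine gap is in your treatment of the intermediate strip $\kappa(|x(0)|)/(4C_\kappa)<D(x)\le(R+\psi(|x(0)|))^{1/2}$, which exists only because you insist on $\psi\asymp\kappa^2$. To cover the strip you assert that Lemma~\ref{L:52} (hence Lemma~\ref{L:54}) remains valid whenever $D(x)$ is at most \emph{any} constant multiple of $\kappa(|x(0)|)$. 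That is false: the proof of Lemma~\ref{L:52} uses $D(X_t)\le D(X_1)+D(x)$ and then needs the resulting threshold $\psi(|X(0)|)/(2C_\psi)-D(x)-1$ to stay a positive multiple of $\psi(|x(0)|)$, which forces $D(x)\le A\,\psi(|x(0)|)$ with $A$ strictly below $1/(2C_\psi)$ (the stated $1/(4C_\psi)$ just leaves a safety margin). On your strip $D(x)$ can be as large as roughly $\sqrt{C_5/2}\,\kappa(|x(0)|)$, where $C_5$ is a constant produced inside the proof of Lemma~\ref{L:54} over which you have no control, so there is no guarantee the strip lies in the admissible range and the ``retrace the proof with $4C_\kappa$ replaced by $\tilde c$'' step can fail.

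The fix is to make the two regions overlap rather than to bridge them: choose $\psi$ concave increasing with $\psi(t)/\log t\to\infty$ \emph{and} $\kappa(t)^2/\psi(t)\to\infty$ (possible precisely because Assumption \textbf{\ref{A:3}} gives $\kappa(t)^2/\log t\to\infty$). Then for $|x(0)|$ large one has $(R+\psi(|x(0)|))^{1/2}\le\kappa(|x(0)|)/(4C_\kappa)$, so $\{D(x)\ge\kappa(|x(0)|)/(4C_\kappa)\}$ is contained in the region where Lemma~\ref{L:53} applies and the strip disappears; this is exactly what the paper does. Note also that $\psi$ is not needed to absorb the penalty $e^{-C_5\kappa(|x(0)|)^2}e^{\lambda|x(0)|^\alpha}$ from Lemma~\ref{L:54} --- that term is already killed inside Lemma~\ref{L:54} by $\kappa^2/\log\to\infty$, since it decays faster than $|x(0)|^{2\alpha-2}$; the only job of $\psi(t)\gg\log t$ is to make the error $e^{-2\lambda_2\psi(|x(0)|)}$ from the second Cauchy--Schwarz factor $o(|x(0)|^{2\alpha-2})$. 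With $\psi$ chosen between $\log$ and $\kappa^2$ in this way, the rest of your argument closes as written.
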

\begin{proof}
First we fix the function $\psi$ such that $\psi(t)/\log(t)\to\infty$ and $\kappa(t)^2/\psi(t)\to\infty$ as $t\to\infty$. Such $\psi$ exists due to our assumptions on the growth of $\kappa$. Take $C_\psi$ such that
$\psi(t)\le t + C_\psi$, $t\ge0$.

We begin  in the same way as in Lemma~\ref{L:53}. We derive for any $x\in\C$
\begin{align}\label{fstep2}
\E_x V(X_1)=&\E_x \exp\Bigl(\lambda_1|X(1)|^\alpha+\lambda_2\bigl(D(X_1)^2-\psi(|X(1)|)\bigr)_+\Bigr)\nn\\
\le& \bigl(\E_x e^{2\lambda_1 |X(1)|^\alpha}\bigr)^{1/2}
\bigl(\E_x e^{2\lambda_2 (D(X_1)^2-\psi(|X(1)|))_+}\bigr)^{1/2}.
\end{align}

First we deal with the second factor in the right-hand side of \eqref{fstep2}. We have
\begin{align}\label{fstep21pred}
\E_x e^{2\lambda_2 (D(X_1)^2-\psi(|X(1)|))_+}\le& 1+ \E_x e^{2\lambda_2 (D(X_1)^2-\psi(|X(1)|))}\nn\\
\le&1+ e^{-2\lambda_2\psi(|x(0)|)}\E_x e^{2\lambda_2 [D(X_1)^2+\psi(D(X_1))]}\nn\\
\le&1+ e^{-2\lambda_2(\psi(|x(0)|)-C_\psi-1)}\E_x e^{4\lambda_2 D(X_1)^2}.
\end{align}
We choose $\lambda_2:=\lambda_0/4$. Then \eqref{fstep21pred} and \eqref{firsteq} imply
\begin{equation}\label{fstep21}
\E_x e^{2\lambda_2 (D(X_1)^2-\psi(|X(1)|))_+}\le1+ C e^{-2\lambda_2\psi(|x(0)|)},\quad x\in\C.
\end{equation}
The first factor in the right-hand side of \eqref{fstep2} has been already estimated in Lemma~\ref{L:54}. We put
$\lambda_1:=\nu/2$.
If $|x(0)|\ge N_0$ and $D(x)\le \kappa(|x(0)|)/(4C_\kappa)$, then
\eqref{second}, \eqref{fstep2}, \eqref{fstep21} imply for such $x$
\begin{equation*}
\E_x V(X_1)\le
 e^{\lambda_1|x(0)|^\alpha}(1-C_1|x(0)|^{2\alpha-2}+C_2e^{-2\lambda_2\psi(|x(0)|)})+C_3.
\end{equation*}
Since $\psi(z)/\log(z)\to\infty$ as $z\to\infty$, we get for large enough $N_1$ and all $x\in\C$ with $|x(0)|\ge N_1$ and
$D(x)\le \kappa(|x(0)|)/(4C_\kappa)$
\begin{equation}\label{sluchai1}
\makebox[0pt]{$\E_x V(X_1)\le
 e^{\lambda_1|x(0)|^\alpha}(1-C_4|x(0)|^{2\alpha-2})+C_5\le V(x)(1-C_4|x(0)|^{2\alpha-2})+C_5.$}
\end{equation}
Now let us consider the second case: $D(x)\ge \kappa(|x(0)|)/(4C_\kappa)$. Choose $R$ as in Lemma~\ref{L:53}.
Due to our assumptions on the growth of $\psi$, for large enough $N_2$ and any $z>N_2$ we have $\kappa(z)/(4C_\kappa)\ge (R+\psi(z))^{1/2}$. Thus, by
Lemma~\ref{L:53} we have for all $x\in\C$ with $|x(0)|\ge N_2$ and
$D(x)\ge \kappa(|x(0)|)/(4C_\kappa)$
\begin{equation*}
\E_x V(X_1)\le V(x)/2.
\end{equation*}
This together with \eqref{sluchai1}  proves the lemma.
\end{proof}

\begin{proof}[Proof of Theorem~\ref{T:2}]
 Let $V$ be a Lyapunov function defined in \eqref{vau2} with the parameters specified in Lemma~\ref{L:55}. Let $\Psi\colon\R_+\to\R_+$ be a differentiable concave increasing function such that
\begin{equation*}
\Psi(z)=\frac{z}{(\log z)^{(2-2\alpha)/\alpha}}
\end{equation*}
for large enough $z$ (more precisely, for $z\ge N$ where $N>0$ is the same as in Lemma~\ref{L:55}).
 It follows from \eqref{estv2} that for $x\in\C$ with $|x(0)|\ge N$ 
\begin{equation*}
\E_x V(X_1)\le V(x)-C_1 V(x) (\log V(x))^{(2\alpha-2)/\alpha}+C_2.
\end{equation*}
This together with \eqref{obshayagran}  implies that for any $x\in\C$ 
\begin{equation*}
\E_x V(X_1)\le V(x)-C_1 \Psi(V(x))+C_3.
\end{equation*}
Thus, condition \eqref{Lyap} holds with the function $\Psi$ defined above. Therefore Theorem~\ref{T:2} follows now from Propositions~\ref{P:41} and \ref{P:42}.
\end{proof}

\end{document}